\newcommand{\A}{\mathcal{A}}
\newcommand{\C}{\mathcal{C}}
\newcommand{\D}{\mathcal{D}}
\newcommand{\T}{\mathcal{T}}
\newcommand{\G}{\mathcal{G}}
\renewcommand{\P}{\mathcal{P}}
\newcommand{\Q}{\mathcal{Q}}
\newcommand{\B}{\mathcal{B}}
\newcommand{\Lap}{\Delta}
\newcommand{\HH}{\mathcal{H}}
\newcommand{\PP}{\mathbb{P}}
\newcommand{\N}{\mathbb{N}}
\newcommand{\Z}{\mathbb{Z}}
\newcommand{\R}{\mathbb{R}}
\newcommand{\op}{\operatorname{op}}
\newcommand{\Mor}{\operatorname{Mor}}
\newcommand{\Ima}{\operatorname{Im}}
\newcommand{\caln}{\mathcal{N}}
\newcommand{\defeq}{\overset{\text{\textup{def}}}{=}}
\newcommand{\hadgesh}[1]{\textcolor{Brown}{\emph{#1}}}
\definecolor{ForestGreen}{rgb}{0.0, 0.27, 0.13}
\definecolor{DukeBlue}{HTML}{001A57}
\definecolor{myblue}{rgb}{0.29,0.47,1}
\newcommand{\chg}[1]{\textcolor{black}{#1}}
\newcommand{\im}{\mathrm{Im}}
\newcommand{\cok}{\mathrm{CoKer}}
\renewcommand{\mod}{\textup{-}\curs{mod}}
\newcommand{\Hom}{\mathrm{Hom}}
\newcommand{\Ext}{\mathrm{Ext}}
\newcommand{\End}{\mathrm{End}}
\newcommand{\colim}{\mathop{\textup{colim}}\limits}
\renewcommand{\lim}{\mathop{\textup{lim}}\limits}
\newcommand{\Dimk}{\curs{H}}
\newcommand{\higherlim}[2]{\displaystyle\setbox1=\hbox{\rm lim}
\setbox2=\hbox to \wd1{\leftarrowfill} \ht2=0pt \dp2=-1pt
\setbox3=\hbox{$\scriptstyle{#1}$}
\def\test{#1}\ifx\test\empty
\mathop{\mathop{\vtop{\baselineskip=5pt\box1\box2}}}\nolimits^{#2}
\else
\ifdim\wd1<\wd3
\mathop{\hphantom{^{#2}}\vtop{\baselineskip=5pt\box1\box2}^{#2}}_{#1}
\else
\mathop{\mathop{\vtop{\baselineskip=5pt\box1\box2}}_{#1}}%
\nolimits^{#2}
\fi\fi}
\newcommand{\cals}{\mathcal{S}}
\newcommand{\cala}{\mathcal{A}}
\newcommand{\rk}{\operatorname{rk}\nolimits}
\newcommand{\Ker}{\operatorname{Ker}\nolimits}
\newcommand{\coKer}{\operatorname{coKer}\nolimits}
\newcommand{\VVect}{{\mathrm{Vect}_k}}
\newcommand{\calh}{\mathcal{H}}
\newcommand{\calc}{\mathcal{C}}
\newcommand{\xto}[1]{\xrightarrow{#1}}
\DeclareMathAlphabet\EuR{U}{eur}{m}{n}
\SetMathAlphabet\EuR{bold}{U}{eur}{b}{n}
\newcommand{\curs}{\EuR}
\renewcommand{\mod}{\mbox{-}\curs{mod}}
\newcommand{\Gr}{\curs{Gr}}
\newcommand{\obj}{\mathrm{Obj}}
\newcommand{\Pone}{\widehat{\P}}
\newcommand{\Cone}{\widehat{\C}}
\newtheorem{Thm}{Theorem}[section]
\newtheorem{Ex}[Thm]{Example}
\newtheorem{Prop}[Thm]{Proposition}
\newtheorem{Defi}[Thm]{Definition}
\newtheorem{Lem}[Thm]{Lemma}
\newtheorem{Rem}[Thm]{Remark}
\newtheorem{Cor}[Thm]{Corollary}
\newtheorem{Th}{Theorem}
\title{Foundations of differential calculus for modules over posets}
\author{Jacek Brodzki}
\address{School of Mathematical Sciences, Southampton University, Southampton, UK}
\email{j.brodzki@soton.ac.uk}
\author{Ran Levi}
\address{Institute of Mathematics, University of Aberdeen, Aberdeen, UK}
\email{r.levi@abdn.ac.uk}
\author{Henri Riihim\"aki}
\address{Nordita, Stockholm University, Stockholm, Sweden}
\email{henri.riihimaki@su.se}
\thanks{J. Brodzki and R. Levi are partially supported by an EPSRC grant EP/Y028872/1}
\thanks{Riihim\"aki is supported by The Wallenberg Initiative on Networks and Quantum Information (WINQ)}
\begin{document}

\begin{abstract}
Let $k$ be a field and let $\C$ be a small category. A $k$-linear representation of $\C$, or a $k\C$-module, is a functor from  $\C$  to the category of finite dimensional vector spaces over $k$. Unsurprisingly, it turns out that when the  category $\C$ is more general than a linear order, then its representation type is generally infinite and in most cases wild. Hence the task of understanding such representations in terms of their indecomposable factors becomes difficult at best, and impossible in general. This paper offers a new set of ideas designed to enable studying modules locally. Specifically, inspired by work in discrete calculus on  graphs, we set the foundations  for a calculus type analysis of $k\C$-modules, under some restrictions on the category $\C$. 
As a starting point, for a $k\C$-module $M$ we define its gradient \emph{gradient} \(\nabla[M]\) as a virtual module in the  Grothendieck group of isomorphism classes of $k\C$-modules, where the operation is induced by direct sum.  Pushing the analogy with ordinary differential calculus and discrete calculus on graphs, we define left and right divergence via the appropriate left and right Kan extensions  and two bilinear pairings on modules and study their properties, specifically with respect to adjointness relations between the gradient and the left and right divergence. The left and right divergence are shown to be rather easily computable in favourable cases. Having set the scene, we concentrate specifically on the case where the category $\C$ is a finite poset. Our main result is a necessary and sufficient condition for the gradient of a module $M$ to vanish under certain hypotheses on the poset.  We next investigate  implications for two modules whose gradients are equal. Finally we consider the resulting left and right Laplacians, namely the compositions of the divergence with the gradient, and study an example of the relationship between the vanishing of the Laplacians and the gradient.
\end{abstract}
\maketitle


Let $\C$ be a small category and let $\A$ be an abelian category. The category $\A^\C$ of functors $M\colon\C\to\A$ and natural transformations between them is again an abelian category. When $\A$ is the category $\VVect$ of finite dimensional vector spaces over a field $k$, objects of $\A^\C$ are frequently referred to as \hadgesh{$k$-linear representations of $\C$}. When \(\C\) has finitely many objects the representations can be identified as \hadgesh{$k\C$-modules}, i.e.~modules over the category algebra. In this article we shall be interested in the category $k\C\mod$ of such modules. 
A special case of interest, motivated in part by its prevalence in topological data analysis,  is where $\C$ is a finite category, i.e.~all its morphisms form a finite set, or even more specifically, a finite poset. 

Drozd trichotomy theorem \cite{Drozd} states that if $\C$ is a finite category generated by an acyclic quiver and $k$ is an algebraically closed field, then there are exactly three possibilities: either $k\C\mod$ contains finitely many isomorphism types of indecomposable modules (finite representation type), or it contains an infinite family of indecomposable modules that can be parametrised by a $1$-parameter family (tame representation type), or there is an $n$-parameter family of indecomposable modules for arbitrarily large $n$ in $k\C\mod$ (wild representation type).  Arbitrary finite categories, including finite posets,  are typically of infinite (tame or wild) representation type, which makes classification by decomposition into indecomposable summands  difficult in the tame case and  impossible in the wild case.

In this article we introduce a new approach to the study of $k\C$-modules. Instead of attempting to understand modules globally as a sum of indecomposable modules, or by taking projective resolutions, we propose a \hadgesh{calculus of $k\C$-modules}. That is, a very general methodology that enables one to extract local information about any given module. \chg{By \hadgesh{local information} we mean studying the behaviour of the restriction of modules to carefully chosen subcategories. In the case where $\C$ is a poset these could be sub-posets of $\C$ generated by vertex neighbourhoods,  by more general subtrees, or by Aleksandrov open subsets \cite{Alek} of the Hasse diagram of $\C$. }

The analogy with ordinary calculus is clear:  in exploring properties of a nice real valued function of several variables one typically employs standard differential techniques, which allow studying the function locally. The notions of gradient, divergence and Laplacian come to mind in this context. Inspired by these ideas, there is a discrete version of multivariable calculus for weighted directed graphs \cite{Lim}. Our treatment of $k\C$-module theory puts the ideas of discrete calculus on graphs into a categorical framework. \chg{Specifically, we shall define the notion of a gradient  for isomorphism classes of $k\C$-modules and show that, when restricted to the discrete context of integer valued functions on graphs, it coincides with the classical graph theoretic gradient. We also define left and right divergence  and left and right Laplacians. Again, in the discrete context we show that the  left and right coincide and reproduce the  divergence and the Laplacian for integer valued functions on graphs. Having established the general concepts and their basic properties, we restrict attention to the case where $\C$ is a finite poset and study some of the basic properties of our differential operators. }

The paper is motivated in part by the ever growing interest within the applied algebraic topology community in \hadgesh{generalised persistence modules}, which can be thought of as $k\C$-modules where $\C$ is typically a poset (possibly with nontrivial topology on its object set) \cite{BCB, BoLe, BdSS, BuSc, CZ_multi, OUDOT}. Applications of the  formalism presented here will be considered in subsequent work.


\section{Statement of Results}
\label{Sec:Results}
To state our main results some preparation is required. We provide a brief description here, and more details in Section \ref{Sec:preliminaries}. \hadgesh{Calculus on weighted directed graphs} is a discrete calculus for functions whose domain is the vertex set of a finite graph with weighted directed edges \cite{Lim}.  In this context the gradient is an operation which takes functions on the vertex set of a graph to functions on its edge set. We shall work in general with \hadgesh{quivers}, also known as \hadgesh{directed multigraphs}. We consider quivers as quadruples $(V, E, s, t)$, where $V$ is a set of vertices, $E$ is a set of edges and $s, t\colon E\to V$ are \hadgesh{source} and \hadgesh{target} functions. We will  assume throughout that quivers are \hadgesh{loop free}, namely that there is no $e\in E$, such that $s(e) = t(e)$.

Let $\G= (V, E)$ be a quiver. If $f$ is a function on $V$ with values in some additive group $A$, then the graph theoretic gradient is defined to be a function from $E$ to $A$, which takes an edge to the difference between the value at its target vertex and the value at its source vertex \cite{Lim}. We wish to categorify this idea, and so we work with categories that are generated, in the appropriate sense, by a quiver. To proceed,  we need two extra ingredients. 

First, we want the gradient of a $k\C$-module to be an object of the same type, namely a $k\widehat{\C}$-module, for some category $\widehat{\C}$ related to $\C$. To achieve this we associate with a quiver $\G$ another quiver $\widehat{\G}$, the line digraph of $\G$, whose vertices are the edges of $\G$ and whose edges are pairs of edges $(e,e')$ such that $t(e) = s(e')$. Thus the graph theoretic gradient can be thought of as an operator that takes functions on the vertices of $\G$ to functions on the vertices of the line digraph $\widehat{\G}$.  

The second ingredient we need is a structure that will allow us to take the difference of modules.  This motivates the use of the Grothendieck group of isomorphism classes of modules, where the sum is induced by direct sum of modules.  This  is frequently referred to as the \hadgesh{split Grothendieck group}. 

If $\G$ is a generating quiver for a small category  $\C$, namely the category $\C$  is a quotient category of the path category $\PP(\G)$ (See Section \ref{Sec:preliminaries} for details), then our gradient ought to be an operation that takes a  module  $M\in k\C\mod$ to a difference of two modules in $k\Cone_\G\mod$, where $\Cone_\G$ is a category that is obtained from $\C$ using the  line digraph $\widehat{\G}$ associated to $\G$, and is referred to as a \hadgesh{line category associated to $\C$ with respect to $\G$}. The category $\Cone_\G$ comes with two obvious functors $\tau, \sigma\colon \Cone_\G\to \C$ that associate with an object of $\Cone_\G$ its target and source object in $\C$, respectively. 

We define the gradient as the difference $\nabla\defeq \tau^*-\sigma^*$ of two natural homomorphisms $\tau^*, \sigma^*\colon \Gr(k\C)\to \Gr(k\Cone_\G)$ between the Grothendieck groups, (the \hadgesh{target} and \hadgesh{source} morphisms) that captures the variation of a module along each  morphism in $\C$ that is an edge in $\G$. Those morphisms in $\C$ can be thought of as  discrete analogs of infinitesimally small moves in a metric space. We show in  Example \ref{Ex:standard-grad} that our gradient, interpreted appropriately, is a generalisation of the graph theoretic gradient. The gradient we define is easily shown to be, as expected, a group homomorphism whose kernel contains all  \hadgesh{locally constant} modules, i.e., functors that take each morphism in $\C$ to an isomorphism of vector spaces, and it is natural with respect to inclusion of categories  (Proposition  \ref{Prop:Grad}). Continuing the analogy to calculus on graphs, we define two categorical versions of \hadgesh{divergence}, left and right, which can be shown to coincide with each other when restricted to the context of graphs, and reproduce the ordinary graph theoretic divergence (Example \ref{Ex:standard-div}). We then proceed with two types of bilinear, non-symmetric pairings on $\Gr(k\C)$, which under appropriate assumptions are shown to be very well behaved (Lemma \ref{Lem:Euler-form}), and again, when reduced to the context of graphs both pairings coincide, become symmetric and reproduce the standard inner product of  integer valued functions on graphs (Example \ref{Ex:pairing-discrete}). 

Having prepared the general background, we restrict attention to the main object of study - modules over posets. Finite posets are an important family of small categories that is  of  general interest, as well as being central to generalised persistence module theory \cite{BlBrHa, KiMe}. While the results stated below do not always require this restriction, some of the problems that arise in the general case are avoidable when one deals with modules over posets. The more general case will be studied in future works. One immediate advantage is that a finite poset $\P$ is generated by a canonical digraph that is its \hadgesh{Hasse diagram $\HH_\P$} - the digraph consisting of all irreducible morphisms in $\P$. 

By analogy to ordinary calculus, an obvious question is whether a vanishing gradient of a module $M$ implies that it is locally constant. The answer turns out to be quite subtle. A \hadgesh{directed tree} is a quiver $\T$ with the property that between any two distinct vertices $a$ and $b$ in $\T$ there is at most one directed path. We say that a poset $\P$ is \hadgesh{line connected} if the line digraph of its Hasse diagram is connected.  By convention, when we refer to a relation $x\le y$ in a poset $\P$ as irreducible, we exclude the possibility $x=y$. The following theorem is our first main result.

\begin{Th}[Thm.~\ref{Thm:grad0}]\label{Th:grad0}
	Let $\P$ be a finite poset, and let $\T\subseteq\HH_\P$ be a line connected subgraph that is a tree.  Let $\P_\T\subseteq \P$  denote the sub-poset generated by $\T$. Let $M\in k\P\mod$ be a module,  and let $M_\T$ denote the restriction of $M$ to $\P_\T$. Assume that $\nabla[M_\T]=0$ in $\Gr(k\P_\T)$. Then the following statements hold.
	\begin{enumerate}[(1)]
		\item 
		For any  objects $u,v\in\obj(\P_\T)$, there is an isomorphism \[\alpha_{u,v}\colon M(u)\to M(v),\] such that $\alpha_{u,u} = 1_{M(u)}$ and  $\alpha_{v,w}\alpha_{u,v} = \alpha_{u,w}$.
		\label{Th:grad0-1}
		\item 
		For every pair of irreducible relations $u\le w$  and $s\le t$ in $\P_\T$,  
		\[
		\alpha_{w,t}\circ M(u\le w) = M(s\le t)\circ \alpha_{u,s}.
		\]
		\label{Th:grad0-2}
		\item
		$M_\T$ is locally constant  if and only if $M(u < v)$ is an isomorphism for some  irreducible relation $u <  v$ in $\P_\T$.
		\label{Th:grad0-3}
	\end{enumerate}
Furthermore, if $M\in k\P\mod$ is a module such that for any irreducible relation $u\le v$ in $\P_\T$ there exists an isomorphism $\alpha_{u,v}\colon M(u)\to M(v)$ that satisfy  Conditions \ref{Th:grad0-1} and \ref{Th:grad0-2}, then $\nabla[M_\T]=0$.
\end{Th}

If $\P$ is a general finite poset, and $\Q\subseteq\P$ is a sub-poset whose Hasse diagram is a line connected tree, then Theorem \ref{Thm:grad0} gives information for any module $M\in k\P\mod$ on its restriction to $\Q$. This fits well with the idea of a local approach to studying modules. Notice that if $\P$ is line connected, then one can show that $\HH_\P$ contains a line-connected maximal tree $\T$. In that case statements \ref{Th:grad0-1} -- \ref{Th:grad0-3} of Theorem  \ref{Th:grad0} apply to all objects of $\P$.

We next examine the implication for a pair of  $k\P$-modules of having isomorphic gradients. A typical element in the Grothendieck group $\Gr(k\P)$, which is ordinarily referred  to as a \hadgesh{virtual module}, is a difference of two equivalence classes of genuine modules. Hence understanding modules of equal gradient is equivalent to gaining information about the kernel of the gradient homomorphism. As one can expect, in view of Theorem \ref{Th:grad0}, there are virtual modules of a vanishing gradient which are not locally constant. This in particular implies that the gradient is not a complete invariant, and so it makes sense to compare it to another incomplete invariant for modules - the rank invariant.

For $M\in k\P\mod$, define the \hadgesh{rank invariant} $\rk(M)\colon \Mor(\P)\to \N$ to be the function that takes a relation $x\le y$ to the rank of the homomorphism $M(x\le y)$. The rank invariant clearly extends by additivity to a group homomorphism $\rk\colon\Gr(k\P)\to\Z$, since it depends only on the isomorphism type of a module. Notice also that our definition of the rank invariant includes  the so called \hadgesh{Hilbert function}, which appears as the rank of $M$ applied to identity morphisms. It is well known that the rank invariant is a complete invariant for modules over \((\R,\leq)\) or any other linear order, but not for more general posets \cite{CZ_multi}. Our next theorem relates the gradient to the rank invariant. 

\begin{Th}[Thm.~\ref{Thm:Rank}]\label{Th:Rank}
	Let $\P$ be a finite poset. Let $ X = [M]-[N]\in \Gr(k\P)$ be any element. 
	\begin{enumerate}[(1)]
		\item Assume that $\nabla X = 0$.  Then $\rk X(u_0<v_0) = \rk X(u_1<v_1)$ for any pair of comparable objects $(u_0,u_1) < (v_0,v_1)$  in $\Pone$.  \label{Thm:Rank-Intro:1} 
	\end{enumerate}
	
Assume in addition that  $\T\subseteq\HH_\P$ is a line connected  tree, and let $\P_\T\subseteq \P$ be the sub-poset generated by $\T$ and let $[X_\T]$ denote the restriction of $ X$ to $\P_\T$. If $ X$ has a vanishing gradient on $\T$, then  
	
	\begin{enumerate}[(1)]
		\setcounter{enumi}{1}
		\item $\rk[X_\T]$ is constant on all identity morphisms in $\P_\T$, and 
		\label{Thm:Rank-Intro:2}
		\item $\rk[X_\T]$ is constant on all non-identity irreducible relations  in $\P_\T$, namely for any pair  $u_0<u_1$, and $v_0< v_1$ of such relations,   one has $\rk X(u_0<u_1) = \rk X(v_0<v_1)$. \label{Thm:Rank-Intro:3}
	\end{enumerate}
\end{Th}

Example \ref{Ex:Rank-best} shows that two modules may have equal gradients, but different rank invariants. On the other hand, Example \ref{Ex:Grad-Incomplete} gives infinitely many non-isomorphic modules with equal rank invariants and different gradients. Thus the gradient and the rank invariant are complementary in the data they detect.

In discrete calculus for finite weighted digraphs one considers real valued functions on vertices or edges as elements in finite dimensional real vector spaces, and as such one has the ordinary inner product defined on the spaces of vertex and edge functions, $\langle -, -\rangle_V$ and $\langle -, -\rangle_E$ respectively. This pairing allows one to define divergence and Laplacian for digraphs \cite{Lim}. The adjoint operator $\nabla^*$ with respect to the inner product, also referred to as the \hadgesh{divergence}, is then defined by the requirement that  the relation
\[\langle \nabla^*(f), g\rangle_V = \langle f, \nabla(g)\rangle_E\]
holds. 

In our context there is no obvious analog of an inner product on a real vector space, and hence defining an operator analogous to divergence is not straightforward. We propose two (related) substitutes. For modules $M, N\in k\P\mod$ define two pairings 
\[\langle [M], [N]\rangle_\P \defeq \dim_k(\Hom_{k\P}(M, N)),\quad\text{and}\quad \chi_\P([M], [N]) \defeq \chi(\Ext^*_{k\P}(M,N)),\]
which we refer to as the \hadgesh{Hom pairing} and the \hadgesh{Euler pairing}, respectively. The definition extended to  pairings on $\Gr(k\P)$ by additivity. Similarly define the corresponding pairings on $\Gr(k\Pone)$.  Since $\P$ is assumed to be a finite poset and we assume modules in $k\P\mod$ to be finitely generated, $\Ext^i_{k\P}(M,N)$ is a finite dimensional vector space for all $i$ and it vanishes for $i$ sufficiently large (See Lemma \ref{Lem:Euler-well-def}). Hence the Euler pairing is well defined. Going back to the motivating example of graph theory we show in Example \ref{Ex:pairing-discrete} that, with a suitable interpretation of functions on graphs as modules in an appropriate category, the two pairings coincide with each other, and are directly analogous to the ordinary inner product. In general, of course, neither pairing is symmetric.

\chg{While the Euler pairing offers some better  properties, the adjointness relations above do not hold for it in general. However, if the Hasse diagram of $\P$ is a tree, thus in particular an acyclic quiver, then $\Ext^i_{k\P}(M,N)$ vanishes for $i>1$ \cite[Theorem 2.3.2]{Derksen-Weyman}, and the Euler pairing can be computed as an ordinary inner product of Hilbert functions  \cite[Proposition 2.5.2]{Derksen-Weyman} (See Definition \ref{Def:dimvec} and Lemma \ref{Lem:Euler-form}).  This allows us to prove an important property of the Euler pairing, namely that if $\P$ is a poset whose Hasse diagram is a \hadgesh{rooted tree}, i.e., then $\chi_\P(-,-)$ is non-singular (Lemma \ref{Lem:Euler-form}). Furthermore, the Euler pairing $\chi_\P([M],[N])$ can be computed in general by means of a projective resolution of $M$ or an injective resolution of $N$, as our next theorem shows.}

\begin{Th}[Thm.~\ref{Thm:pairing}]\label{Th:pairing}
Let $\P$ be a finite poset, and let $M, N\in k\P\mod$. Let 
\[0\to P_n\to\cdots\to P_0\to M\to 0, \quad\text{and}\quad 0\to N\to I_0\to\cdots I_n\to 0\]
be a projective resolution for $M$ and an injective resolution for $N$.
Write $P_i \cong \bigoplus \epsilon^i_vF_v$ and $I_j = \bigoplus \delta_u^j G_u$, with $\epsilon^i_v, \delta^j_u\in \N$ and $v, u\in\P$. Then
\begin{equation*}\chi_\P([M],[N]) = \sum_{v\in\P}\sum_{i=0}^n(-1)^i\epsilon_v^i\dim_k N(v) = \sum_{u\in\P}\sum_{j=0}^n(-1)^j\delta_u^j\dim_k M(u).
\end{equation*}
\end{Th}

We next offer our analog of the divergence.  Considering the  Hom pairing as an analog of an inner product, the left and right Kan extensions offer themselves as a natural way of constructing a left adjoint  $\nabla^*$ and a right adjoint $\nabla_*$ of the gradient.  Thus for $N\in k\Pone\mod$, we define   \hadgesh{left divergence $\nabla^*$ and  right divergence $\nabla_*$} by
\[\nabla^*[N]\defeq [L_{\tau}(N)]- [L_{\sigma}(N)],\quad\text{and}\quad \nabla_*[N]=[R_{\tau}(N)]- [R_{\sigma}(N)],\]
where $L_\tau(N)$ and $L_\sigma(N)$ are the left Kan extensions, and $R_\tau(N)$ and $R_\sigma(N)$ are the right Kan extensions, in both cases along $\tau$ and $\sigma$ respectively.
 Example \ref{Ex:standard-div} demonstrates that with the right categorical setup for ordinary weighted digraphs, the left and right divergence operators coincide with each other  and amount to the ordinary definition of the graph theoretic divergence. With this setting the following relations 
	\[\langle \nabla X,  Y\rangle_{\Pone} = \langle  X, \nabla_* Y\rangle_\P\quad\text{and}\quad
	\langle \nabla^* X,  Y\rangle_{\P} = \langle  X, \nabla Y\rangle_{\Pone}\]
are straightforward, and can in fact be used as a definition of the left and right divergence.

\chg{Composing the gradient with the left and the right divergence, we obtain \hadgesh{left and right Laplacians $\Lap^0$ and $\Lap_0$}, respectively.  By analogy to graph theory, where the left and right Laplacians coincide, one may wonder whether virtual modules whose left and/or right Laplacian vanishes  have a vanishing gradient as well. The answer to this question turns out to be much more subtle, but in the case where $\P$ is a finite linear poset, the answer turns out to be affirmative, as our next theorem states.}

\chg{\begin{Th}[Thm \ref{Thm:Lap-Grad-Kernels}]\label{Th:Lap-Grad-Kernels}
\label{Thm:Lap-Grad-Kernels}
Let $\P$ denote the poset with objects $0,1,\ldots, n$, for some $n\geq 0$. Then the homomorphisms $\nabla, \Delta^0$ and $\Delta_0$ with domain $\Gr(kP)$ satisfy $\Ker\nabla = \Ker\Delta^0 = \Ker\Delta_0$.
\end{Th}}

\chg{Unlike the classical case,  the non-singularity of the Euler form (acting in place of an inner product) cannot be used to prove the theorem but rather only to show that a virtual module with a vanishing left or right Laplacian has a vanishing Hilbert function, which is a significantly weaker statement. Instead we use an explicit calculation of the Laplacians for $\P=[n]$ from which we are able to draw the required consequence.}

Computing the left and right divergence could  be quite involved for a general poset. However, here too we recall our local approach. Namely, while we may not be able to compute divergence for general posets, restricting to sub-posets will provide partial information. Thus we  consider the case where the Hasse diagram for the poset in question is a tree.  In that case our final theorem offers a rather easy calculation.

\begin{Th}[Thm.~\ref{Thm:Divergence-tree}]\label{Th:Divergence-tree}
Let $\P$ be a finite poset whose Hasse diagram is a tree, and let $N\in k\Pone\mod$.  For any object $y\in\P$ let $\caln_y$ denote the neighbourhood of $y$ in $\P$, and let $\widehat{\caln}_y$ be the associated line poset. Then the following statements hold.
\begin{enumerate}[(1)]
\item  
$L_{\tau}(N)(y) \cong \bigoplus_{(u,y)\in\Pone}N(u,y),\quad{and}\quad L_{\sigma}(N)(y)\cong \colim_{\widehat{\caln}_y}N|_{\widehat{\caln}_y}$.
\label{Th:Divergence-tree-1}
\item  
$R_{\tau}(N)(y) \cong \lim_{\widehat{\caln}_y} N|_{\widehat{\caln}_y},\quad\text{and}\quad R_{\sigma}(N)(y)\cong \bigoplus_{(y,v)\in\Pone} N(y,v)$.
\label{Th:Divergence-tree-2}
\end{enumerate}
\end{Th}

\chg{Based on this computation, we conjecture, although we have not been able to prove it, that Theorem \ref{Th:Lap-Grad-Kernels} can be generalised to any poset that is generated by a disjoint union of rooted trees.}

The paper is organised as follows. Section \ref{Sec:preliminaries} contains all the technical background material we use throughout the paper. In Section \ref{Sec:general-grad-div} we prepare the general setup for the construction of the gradient and the divergence in the context of module categories. In Section \ref{Sec:pairings} we study some properties of the Hom pairing and the Euler pairing in the context of modules over category algebras. Section \ref{Sec:Grad} is dedicated to the  study of the gradient of virtual modules over finite posets and the proof of Theorems \ref{Th:grad0} and  \ref{Th:Rank}. In Section \ref{Sec:Euler} we specialise the Hom and Euler pairings for modules over posets.  We compute the pairings in some interesting cases, study the relationships between them and prove Theorem \ref{Th:pairing}. Section \ref{Sec:divergence} is dedicated to the (left and right) divergence, some  adjointness relations with the gradient and the proof of Theorem \ref{Th:Divergence-tree}.  We then define  the corresponding Laplacians, and prove Theorem  \ref{Th:Lap-Grad-Kernels}. 

The authors are grateful to E. Meir for many helpful conversations on modules over category algebras and for finding an error in an early version of this paper.


\section{Preliminaries}
\label{Sec:preliminaries}

In this section we record the definitions, notation and all preliminary material that will be used throughout the paper. 

\subsection{$k\C$-Modules}
\label{Subsec:Cat-Alg-Mod}

Let $\C$ be a small category and let $\A$ be an abelian category. Then the functor category $\A^\C$ whose objects are functors from $\C$ to $\A$ and whose morphisms are natural transformations is also an abelian category. If $\tau\colon \C\to \D$ is a functor then pre-composition with $\tau$ induces a functor 
\[\tau^*\colon \A^\D\to \A^\C.\]
This functor, which is sometimes referred to as the \hadgesh{restriction along $\tau$}, will be used in our definition of the gradient in Section \ref{Sec:general-grad-div} (Definition \ref{Def:grad-general}). If $\A$ is complete and cocomplete, then the restriction $\tau^*$  generally has left and right adjoints given by the left and right Kan extensions, respectively (Section \ref{Subsec:Kan}). These will be used in Section \ref{Sec:general-grad-div} to define a left and a right divergence (Definition \ref{Def:div-general}). A good reference for the general theory of functor categories is \cite{GS}.

Fix a field $k$. Unless otherwise specified we will not assume that  $k$ is algebraically closed. Let $\VVect$ be the category of \hadgesh{finite dimensional} vector spaces over $k$.
 
\begin{Defi}\label{Def:kC-mod} 
Let $\C$ be a small category. A \hadgesh{$k\C$-module} is a functor $M\colon \C \to \VVect$. Let $k\C\mod$ denote the category of $k\C$-modules and natural transformations between them.
\end{Defi}

When the category $\C$ has finitely many objects the category $k\C\mod$ has an equivalent description, which is useful for some purposes.

\begin{Defi}\label{def:poset_algebra}
	Let \(k\) be a field and let $\C$ have finitely many objects. The \hadgesh{category algebra} \(k\C\)  is the unital algebra generated as a $k$-vector space by all morphisms \(x \to y\) in $\C$ (including identities). Two morphisms multiply by composition, and non-composable morphisms multiply to 0 \cite{Xu}. The unit is the $k$-algebra map $\eta\colon k\to k\C$ that sends $1\in k$ to the element $\mathbf{1}\in k\C$, that is the sum over all objects in $\C$ of the identity morphisms $1_x\defeq x\to x$. 
\end{Defi}
Clearly, finiteness is required in this definition only for the unit element to make sense. The following theorem due to Mitchell allows one to alternate between functor categories and categories of modules in the usual sense, when the category in question is finite.

\begin{Thm}[\cite{Mitchell}]\label{Thm:Mitchell]}
	Let \(\C\) be a category with finitely many objects and let $k$ be a field. The category of modules over the category algebra $k\C$ and $k\C$-linear homomorphisms is equivalent to the category  $k\C\mod$ of functors $M\colon\C\to \VVect$ and natural transformations between them.
\end{Thm}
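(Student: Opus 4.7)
The plan is to construct explicit quasi-inverse functors $\Phi\colon k\C\mod \to k\C\text{-Mod}$ and $\Psi\colon k\C\text{-Mod}\to k\C\mod$, where $k\C\text{-Mod}$ denotes modules over the category algebra in the usual sense, and to verify that they give an equivalence. Everything will rest on one structural observation about $k\C$: the family $\{1_x\}_{x\in \obj(\C)}$ is a \emph{complete system of orthogonal idempotents}. Concretely, $1_x\cdot 1_y = \delta_{xy}\,1_x$ since non-composable morphisms multiply to zero; the sum $\sum_x 1_x = \mathbf{1}$ by Definition~\ref{def:poset_algebra}; and for any generating morphism $f\colon x\to y$ one has $1_y\cdot f = f = f\cdot 1_x$ with $1_z\cdot f = 0$ for $z\ne y$ (and similarly on the right). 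This is what makes the dictionary between functors and modules possible, and finiteness of $\obj(\C)$ is exactly what is needed to guarantee $\sum_x 1_x$ is a legal element of $k\C$.

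First I would define $\Phi$. Given a functor $M\colon \C\to \VVect$, set
\[
\Phi(M) \;\defeq\; \bigoplus_{x\in\obj(\C)} M(x),
\]
a finite-dimensional $k$-vector space since $\obj(\C)$ is finite and each $M(x)$ is finite-dimensional. Declare a morphism $f\colon x\to y$ in $\C$ to act as $M(f)$ on the summand $M(x)$ and as zero on the remaining summands; extend $k$-linearly to all of $k\C$. Checking that this is a well-defined left $k\C$-module reduces to two verifications: compatibility with composition (which is precisely functoriality of $M$, together with the rule that non-composable morphisms multiply to zero), and the fact that $\mathbf{1}=\sum_x 1_x$ acts as the identity (which follows since $1_x$ acts as the identity on $M(x)$). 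A natural transformation $\eta\colon M\to N$ yields the map $\bigoplus_x \eta_x\colon \Phi(M)\to \Phi(N)$, and naturality of $\eta$ translates verbatim into $k\C$-linearity.

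Next I would define $\Psi$ using the Peirce decomposition. Given a $k\C$-module $V$, put $\Psi(V)(x) \defeq 1_x V$. Since $V = \mathbf{1}\cdot V = \sum_x 1_x V$ and the idempotents are orthogonal, this is in fact an \emph{internal direct sum} $V = \bigoplus_x 1_x V$, so each $1_xV$ is finite-dimensional as a subspace of the finite-dimensional $V$. For a morphism $f\colon x\to y$ in $\C$, define $\Psi(V)(f)\colon 1_xV \to 1_yV$ by $v\mapsto f\cdot v$; this lands in $1_yV$ because $1_y\cdot f = f$. Functoriality of $\Psi(V)$ is immediate from associativity in $k\C$ and the identity $1_y\cdot 1_y = 1_y$ on identity morphisms. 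On arrows, a $k\C$-linear map $\varphi\colon V\to W$ restricts to maps $1_xV\to 1_xW$ (because it commutes with left multiplication by $1_x$) which are automatically natural in $x$.

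The main, and really only, step that requires care is showing $\Phi$ and $\Psi$ are quasi-inverse. For $\Psi\circ\Phi$ and a functor $M$, the idempotent $1_x$ acts on $\Phi(M)=\bigoplus_y M(y)$ as projection onto $M(x)$, so $1_x\Phi(M) = M(x)$ canonically, and the action of $f\colon x\to y$ on this summand is $M(f)$ by construction; this gives a natural isomorphism $\Psi\Phi(M)\cong M$. For $\Phi\circ\Psi$ and a module $V$, the Peirce decomposition $V=\bigoplus_x 1_xV$ furnishes a $k$-linear isomorphism $\Phi\Psi(V)\to V$, and one checks it is $k\C$-linear by testing on generators: any element of $k\C$ is a $k$-linear combination of morphisms $f\colon x\to y$, and $f$ acts on $\Phi\Psi(V)$ by $M(f)=\bigl(v\mapsto f\cdot v\bigr)$ on the $x$-summand and zero elsewhere, which under the isomorphism coincides with left multiplication by $f=1_y\cdot f\cdot 1_x$ on $V$. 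I expect no serious obstacle here; the mild bookkeeping lies in tracking that $f\cdot v$ vanishes when $v\in 1_zV$ for $z\neq x$, which is exactly $f\cdot 1_z = 0$.
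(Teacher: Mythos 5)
Your proposal is correct and follows exactly the route the paper itself sketches (and attributes to Mitchell): the functor $M\mapsto\bigoplus_x M(x)$ with the morphism-wise action, the inverse $V\mapsto 1_xV$ via the Peirce decomposition for the complete orthogonal system of idempotents $\{1_x\}$, and the check that these are quasi-inverse. The paper merely describes these two constructions and cites the reference, so your write-up simply supplies the routine verifications it omits.
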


The equivalence in Mitchell's theorem is given as follows. If $M\colon\C\to\VVect$ is a functor, let 
\[\mathbf{M} \defeq \bigoplus_{x\in\obj(\C)}M(x).\] 
The (left) action of $k\C$ on $\mathbf{M}$ is determined by the way in which each morphism  $\varphi\colon y\to z$ in $\C$ acts on a typical element of $\mathbf{M}$. Firstly, for any \(m \in M(y)\) we define
\[\varphi \cdot m = M(\varphi)(m) \text{ if } \varphi \in \Hom_\C(y,z) \text{ for some } z \in \C.\]
Otherwise we define \(\varphi \cdot m = 0\). Then extending this operation linearly to any \(\mathbf{m} \in \mathbf{M}\) and \(f \in k\C\) gives the desired \(k\C\)-module structure.

Conversely, if $\mathbf{N}$ is a left module over the category algebra $k\C$, let $N\colon\C\to\VVect$ be the functor that takes an object $x\in\C$ to $N(x) \defeq 1_x\cdot\mathbf{N}$ and a morphism $\varphi\colon y\to z$ to the homomorphism $N(\varphi)$ that maps $1_y\cdot\mathbf{a}\in 1_y\cdot\mathbf{N}$ to $\varphi\cdot\mathbf{a}\in 1_z\cdot\mathbf{N}$.

The following general terminology is rather standard and will become useful in our analysis.
\begin{Defi}\label{Def:loc_const-virt_triv}
	Let $\C$ be a small category. We say that a  module $M\in k\calc\mod$ is 
	\begin{itemize}
		\item \hadgesh{locally constant} if for every morphism $\varphi$ in $\C$ the induced homomorphism  $M(\varphi)$ is an isomorphism, 
		\item \hadgesh{virtually trivial} if for every non-identity morphism $\varphi$ in $\C$ the induced homomorphism  $M(\varphi)$ is trivial. 
	\end{itemize}
\end{Defi}

\begin{Lem}\label{Lem:virtually-trivial}
	Let $\calc$ be a small category and let $M, N\in k\calc\mod$ be virtually trivial modules. Let $\obj_\calc$ denote the discrete category (only identity morphisms) with objects set $\obj(\calc)$, and let $j_\calc\colon \obj_\calc\to \calc$ denote the inclusion. Then there is a natural isomorphism of groups \[J\colon \Hom_{k\calc\mod}(M,N) \to \Hom_{k\obj_\calc\mod}(j_\calc^*(M), j_\calc^*(N)), \]
	where on both sides $\Hom$ denotes the group of natural transformations between the respective modules, and $j_\calc^*$ denotes pre-composition with $j_\calc$. 
\end{Lem}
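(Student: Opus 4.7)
The plan is to exhibit $J$ as the obvious forgetful/restriction map sending a natural transformation to its family of components, and then to observe that virtual triviality kills every naturality constraint.

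First I would unpack what each side looks like. A morphism $\eta\in\Hom_{k\calc\mod}(M,N)$ is a family $(\eta_x\colon M(x)\to N(x))_{x\in\obj(\calc)}$ of $k$-linear maps satisfying $\eta_y\circ M(\varphi)=N(\varphi)\circ\eta_x$ for every morphism $\varphi\colon x\to y$ of $\calc$. Since $\obj_\calc$ has only identity morphisms, a morphism in $\Hom_{k\obj_\calc\mod}(j_\calc^*M,j_\calc^*N)$ is simply a family $(\eta_x\colon M(x)\to N(x))_{x\in\obj(\calc)}$ with no further constraint. Define $J$ by forgetting the naturality condition; it is a group homomorphism because addition in both Hom-groups is defined componentwise.

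Injectivity is automatic, since a natural transformation is determined by its components. For surjectivity, I would take an arbitrary family $(\eta_x)_{x\in\obj(\calc)}$ and verify that the naturality square
\[\eta_y\circ M(\varphi)=N(\varphi)\circ\eta_x\]
holds for every morphism $\varphi\colon x\to y$ in $\calc$. If $\varphi=1_x$ is an identity, then $M(\varphi)=1_{M(x)}$ and $N(\varphi)=1_{N(x)}$, so the square commutes trivially. If $\varphi$ is a non-identity morphism, then by virtual triviality of both $M$ and $N$ we have $M(\varphi)=0$ and $N(\varphi)=0$, so both sides of the equation are the zero map. Thus every family in the target lifts uniquely to a natural transformation, and $J$ is a bijection.

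Finally, naturality of $J$ in $M$ and $N$ (that is, compatibility with pre- and post-composition by morphisms of virtually trivial modules) is immediate from the fact that $J$ is literally the forgetful map on underlying families. There is no genuine obstacle in this argument; the only substantive point is the observation that virtual triviality collapses the naturality square for every non-identity morphism to the trivial identity $0=0$, which is precisely what decouples the $\calc$-module structure from the underlying $\obj_\calc$-module structure.
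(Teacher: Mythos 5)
Your proof is correct and follows essentially the same route as the paper: both identify $J$ with the map sending a natural transformation to its family of components and observe that virtual triviality makes every naturality square for a non-identity morphism read $0=0$, so the components can be chosen freely. You simply spell out in more detail what the paper compresses into one sentence.
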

\begin{proof}
	Since $M$ and $N$ are virtually trivial, a natural transformation from $M$ to $N$ amounts exactly to a homomorphism of vector spaces $M(x)\to N(x)$ for each $x\in\obj(\calc)$. Thus $J$ is an isomorphism of sets. Since the functor categories are abelian and $j^*_\calc$ is additive, the lemma follows. 
\end{proof}

\begin{Cor}\label{Cor:virtually-trivial}
	Let $\calc$ be a small category and let $M, N\in k\calc\mod$ be virtually trivial modules. Then $M\cong N$ if and only if $M(x)\cong N(x)$ for each object $x\in\obj(\calc)$.
\end{Cor}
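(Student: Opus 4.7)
The plan is to derive this corollary essentially as a direct consequence of Lemma \ref{Lem:virtually-trivial}. The forward implication is immediate: any natural isomorphism $\eta\colon M\to N$ in $k\calc\mod$ evaluates to an isomorphism $\eta_x\colon M(x)\to N(x)$ at each object.

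For the nontrivial direction, I would start by assuming that for every $x\in\obj(\calc)$ we are given an isomorphism $\psi_x\colon M(x)\to N(x)$. Since $\obj_\calc$ is the discrete category on $\obj(\calc)$, the only naturality conditions to check are with respect to identity morphisms, which are automatic. Hence the collection $\{\psi_x\}_{x\in\obj(\calc)}$ assembles into a natural isomorphism
\[
\psi\colon j_\calc^*(M)\xrightarrow{\cong} j_\calc^*(N)
\]
in $k\obj_\calc\mod$. By Lemma \ref{Lem:virtually-trivial}, the restriction map $J$ is a bijection on Hom-groups, so there is a unique $\widetilde\psi\in\Hom_{k\calc\mod}(M,N)$ with $J(\widetilde\psi)=\psi$, and similarly a unique $\widetilde{\psi^{-1}}\in\Hom_{k\calc\mod}(N,M)$ with $J(\widetilde{\psi^{-1}})=\psi^{-1}$.

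It remains to verify that $\widetilde\psi$ is an isomorphism in $k\calc\mod$. Since $J$ is induced by pre-composition with the functor $j_\calc$, it respects composition of natural transformations. Applying this together with the injectivity of $J$ (on the appropriate Hom groups $\Hom_{k\calc\mod}(M,M)$ and $\Hom_{k\calc\mod}(N,N)$, which by Lemma \ref{Lem:virtually-trivial} are again bijective images under $J$), the identities $\psi\circ\psi^{-1}=1_{j_\calc^*(N)}$ and $\psi^{-1}\circ\psi=1_{j_\calc^*(M)}$ pull back to $\widetilde\psi\circ\widetilde{\psi^{-1}}=1_N$ and $\widetilde{\psi^{-1}}\circ\widetilde\psi=1_M$, so $\widetilde\psi$ is the desired isomorphism $M\cong N$.

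There is really no serious obstacle: the only subtlety is the observation that a bijection of Hom-sets induced by a functor automatically transports isomorphisms in both directions, which I would prefer to spell out briefly (as above) rather than invoke as folklore, since the statement of Lemma \ref{Lem:virtually-trivial} records only an isomorphism of groups and not a priori an equivalence of categories.
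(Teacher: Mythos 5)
Your proof is correct and follows the route the paper intends: the corollary is a direct consequence of Lemma \ref{Lem:virtually-trivial}, and your careful verification that the bijection $J$ transports isomorphisms (using that $J$ respects composition and is injective on endomorphism groups) is exactly the content left implicit. One could shortcut slightly by quoting the observation inside the Lemma's proof that a natural transformation between virtually trivial modules is \emph{any} family of linear maps $M(x)\to N(x)$, so a family of object-wise isomorphisms is already a natural isomorphism, but your version is equivalent.
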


\subsection{Quivers  and their line digraphs}
\label{Subsec:line-digraphs}
A central idea in our work is to define a gradient on $k\C$-modules that yields a virtual $k\Cone$-module (See Section \ref{Subsec:Grothendieck}), where $\Cone$ is a certain category associated to $\C$ and a certain generating digraph. This is where the classical graph theoretic construction of the line digraph becomes useful. We start by recalling some basic concepts.

\begin{Defi}\label{Def:digraph}
	 A \hadgesh{quiver}  is  a quadruple  $\G = (V, E, s, t)$, where $V$ and $E$ are sets, called the \hadgesh{vertex set} and \hadgesh{edge set}, respectively, and $s, t\colon E\to V$ are  functions, called \hadgesh{source} and \hadgesh{target} maps.   A \hadgesh{directed graph}, or a \hadgesh{digraph}, \(\G\) is  a quiver where the function $s\top t\colon E\to V\times V$ is injective, namely  for any two vertices $u$ and $v$, there is either no directed edge from $u$ to $v$ or only one such edge.  
 \end{Defi}

We shall frequently denote a quiver or a digraph only by $\G = (V, E)$, namely with no mention of the source and target functions. The full notation will always be used when ambiguity may arise. 

Definition \ref{Def:digraph} allows multiple and reciprocal connections between any pair of vertices in a quiver. In a digraph multiple edges with the same source and target are not allowed, but reciprocal connections are allowed.  When $\G=(V, E, s, t)$ is a digraph, we will consider its edge set as a subset $E\subseteq V\times V$ and write $e= (u,v)$ to denote that $e$ is an edge from $u$ to $v$. 
A quiver is said to be \hadgesh{acyclic} if there is no directed path that starts and ends at the same vertex (in particular, no reciprocal connections are allowed). While acyclicity is not assumed throughout, in the context of this work we do assume  that all quivers are \hadgesh{loop-free}.

\begin{Defi}\label{Def:line-digraph}
	Given a quiver $\G = (V,E, s, t)$, the associated \hadgesh{line digraph} $\widehat{\G} = (\widehat{V}, \widehat{E}, \widehat{s}, \widehat{t})$ is the digraph with vertices $\widehat{V} = E$. If $e, e'\in E$ and $t(e) = s(e')$, then the pair $(e, e')$ is a directed edge $e\to e'$ in $\widehat{E}$. Thus  $\widehat{s}(e, e') = e$ and $\widehat{t}(e, e') = e'$. If $\G$ is a digraph (so there is at most one edge in a given direction between any two vertices), then the directed edges in $\widehat{E}$ will sometimes be denoted by ordered triples $(u,v,z)$, where $(u,v)$ and $(v,z)$ are edges in $E$.
\end{Defi}

Notice also that the terminology \hadgesh{line digraph} is justified because $\widehat{\G}$ is always a digraph (rather than a general quiver). Indeed, let $e, e'\in \widehat{V}$ be vertices. Then there is at most one edge in $\widehat{E}$ from $e$ to $e'$, which exists if and only if $t(e) = s(e')$. Thus $\widehat{G}$ has no multiple edges in the same direction between the same pair of vertices. For instance, if $\G$ has three vertices $u ,v, w$ with $n$ directed edges from $u$ to $v$ and $k$ directed edges from $v$ to $w$, then $\widehat{\G}$ is an $n$ to $k$ directed bipartite graph.

The line digraph construction is clearly functorial with respect to inclusions. Namely, if $\G'\subseteq \G$ is an inclusion of quivers,  then $\widehat{\G'}\subseteq\widehat{\G}$.

A quiver $\G$ is said to be \hadgesh{connected} if, ignoring edge direction and collapsing multiple and bidirectional edges between the same pair of vertices into a single undirected edge, one obtains a connected graph. The line digraph associated to a quiver $\G$ is not generally connected. But if \(\widehat{\G}\) is a connected line digraph of some quiver \(\G\), then \(\G\) is connected. The following concept of connectivity is useful for our purposes.

\begin{Defi}\label{Def:line-components}
	Let $\G$ be a quiver with a line digraph $\widehat{\G}$. A sub-quiver $\G_0\subseteq \G$ is said to be a \hadgesh{line component of $\G$} if there exists a connected component $\widehat{\G_0}\subseteq\widehat{\G}$ such that $\widehat{\G_0}$ is the line digraph associated to $\G_0$. A quiver $\G$ is said to be \hadgesh{line connected} if $\widehat{\G}$ is connected, or equivalently if the only line component of $\G$ is $\G$ itself.
\end{Defi}
\begin{figure}
    \centering
\[\begin{tabular}{|c|c|}
\hline
\xymatrix{
&d\\
b\ar@<-0.5ex>@[red][ur]^{e_4\;} \ar@<0.5ex>@[red][ur]_{\;e_5} && c\ar@[blue][ul]_{e_6}\\
&a\ar@[red][ul]^{e_1}\ar@[blue]@<-0.5ex>[ur]^{e_2\;}\ar@[blue]@<0.5ex>[ur]_{\;e_3}
}\qquad\qquad & 
\xymatrix{
e_4 && e_5 &&e_6\\
\\
&e_1\ar@[red][uul]\ar@[red][uur] && e_2\ar@[blue][uur] && e_3\ar@[blue][uul]
}\\
\hline
\end{tabular}
\]
\caption{A quiver (left) and its line digraph (right)  with the line components and their associated line digraphs in red and blue.}  \label{fig:line-components}
\end{figure}
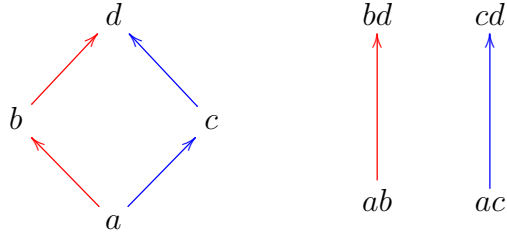
Clearly any quiver $\G$ is a union of its line components (which are not generally disjoint in $\G$). The line components of $\G$ can be constructed from the connected components of $\widehat{\G}$ by considering for each component $\widehat{\G}_0$, the  sub-quiver of $\G$ consisting of all edges corresponding to the vertices of $\widehat{\G}_0$ (See Figure \ref{fig:line-components}).

\subsection{The Grothendieck Group}
\label{Subsec:Grothendieck}
As is the case for any category of modules, many questions about $k\C$-modules can be reduced to questions about isomorphism classes.   In this context the concept of the Grothendieck group is very useful.

Let $k$ be a field, let $A$ be a unital $k$-algebra and let $A\mod$ denote the category of finitely generated left $A$-modules. Then $A\mod$ is a monoid with operation given by direct sum, which is associative and commutative up to isomorphism, and thus the set of isomorphism classes of $A$-modules forms a commutative monoid where 
\begin{equation}
	[M]+[N] \defeq [M\oplus N].
	\label{Eqn:Grothendieck}
\end{equation} 
The \hadgesh{Grothendieck group} $\Gr(A)$ of finitely generated $A$-modules is the group completion  of this monoid. Namely, it is the quotient of the free abelian group generated by isomorphism classes of $A$-modules, subject to the relation \textcolor{red}{(\ref{Eqn:Grothendieck})}. Thus any element  $ X\in \Gr(A)$ can be written uniquely as a difference $ X = [M]-[N]$, where $M, N\in A\mod$. In particular, an equation of the form $[M]-[N] = [U] - [V]$ in $\Gr(A)$ where $M, N, U, V\in A\mod$ should be interpreted as $M\oplus V \cong N\oplus U$. We refer to elements of $\Gr(A)$ as \hadgesh{virtual modules}.
If $f^*\colon B\mod\to A\mod$ is a functor that preserves direct sums then $f^*$ induces a group homomorphism
\[f^*\colon \Gr(B)\to \Gr(A).\]
In certain cases, including some of the cases considered in this article,   $\Gr(A)$ can be endowed with a ring structure by using tensor products. However, multiplicative structures will not be used in this article.

An important example is given by the Grothendieck group $\Gr(k)$ of finite dimensional vector spaces over $k$, which is 
isomorphic to the additive group of integers $\Z$. A much more interesting family of examples that is a subject of extensive study arises in group representation theory. If $G$ is a group and $kG$ its group algebra, then $\Gr(kG)$ is the Grothendieck group of all virtual linear representations of the group $G$ over $k$. In representation theory one typically uses the followin version of the construction. 

\begin{Defi}\label{Rem:No=SES}
	Let $A$ be a unital $k$-algebra. Define the \hadgesh{reduced Grothendieck group} $\Gr_e(A)$ to be the  quotient of the Grothendieck group $\Gr(A)$, by an additional family of relations:
	\[[M] = [M']+[M'']\]
	if there is a short exact sequence of $A$-modules $0\to M'\to M\to M''\to 0$.
\end{Defi}
The reduced Grothendieck group $\Gr_e(A)$ inherits a group structure from that of $\Gr(A)$, and the obvious projection $\Gr(A)\to \Gr_e(A)$ is a group homomorphism. However, $\Gr_e(A)$ is a much less interesting object than $\Gr(A)$ for our purposes, since the class of any module is equal to the sum of all simple modules in a composition series. Thus $\Gr_e(A)$ is isomorphic to the free abelian group generated by the simple $A$-modules. When considering posets, which are of major interest in this paper, simple modules are those that take the value $k$ at a single object and $0$ everywhere else.

\begin{Defi}\label{Def:dimvec}
For a small category $\C$ with object set $V$, define  $\Dimk_\C\colon \Gr(k\C)\to \Z^V$ by
\[\Dimk_\C[M](v) \defeq  \dim_k(M(v)),\]
for any $M\in k\C\mod$ and for $ X = [M]-[N]$ with $M,N\in k\C\mod$, let 
\[\Dimk_\C X \defeq \Dimk_\C[M]-\Dimk_\C[N].\]
For $ X\in\Gr(k\C)$ we refer to $\Dimk_\C X$ as the \hadgesh{Hilbert function of $ X$}.
\end{Defi}
Since the addition operation in $\Gr(k\C)$ is induced by direct sum, the function $\Dimk_\C$ is clearly a group homomorphism, whose kernel is the subgroup of all virtual modules $ X = [M]-[N]$ such that $\dim_kM(v) = \dim_kN(v)$ for every $v\in V$. Clearly for $M\in k\C\mod$, $\Dimk_\C[M] = 0$ if and only if $M=0$. 

\begin{Defi}\label{Rem:lc-vt}
	Let $\C$ be a small category. An element $ X\in\Gr(k\C)$ is said to be \hadgesh{locally constant}, if it can be represented as a difference of the isomorphism classes of two locally constant modules, and similarly for \hadgesh{virtually trivial} (Definition \ref{Def:loc_const-virt_triv}).
\end{Defi}

\begin{Rem}
	\label{Rem:No-ring-hom}
	Recall that if $\C$ is a category with a finite set of objects and $k$ is a field, then the functor category $k\C\mod$ is equivalent to the category of modules over the category algebra $k\C$ \cite{Xu}. If $F\colon\C\to \D$ is a functor, then one obtains a homomorphism $kF\colon k\C\to k\D$, which is a homomorphism of algebras if and only if $F$ is injective on object sets \cite[Proposition 2.2.3]{Xu}, and otherwise is only a homomorphism of vector spaces. 
\end{Rem}

\subsection{Functor categories and Kan extensions}
\label{Subsec:Kan}
Kan extensions will be used to define the divergence operators in Section \ref{Sec:general-grad-div}, and we recall here the basic notions.  Let $\C, \D$ be small categories, and let $\A$ be a category that is bicomplete (i.e., all small limits and colimits exist in $\A$). Let $\A^\C$ and $\A^\D$ denote the categories of functors from $\C$ and $\D$ to $\A$, respectively,  with morphisms given by natural transformations. If $F\colon \C\to\D$ is a functor, then the restriction $F^*\colon \A^\D\to \A^\C$ admits  left and right  adjoints, given by the left and right Kan extensions \cite[X.3, Corollary 2]{MacLane},
\[L_F, R_F\colon  \A^\C\to \A^\D.\]
For the sake of completeness, we briefly describe these constructions. The reader is referred to \cite[Chapter X]{MacLane} for details. For any object $d\in\D$, let $F\downarrow d$ denote the \hadgesh{overcategory of $d$ with respect to $F$}, whose objects are pairs $(c,\varphi)$, where $c$ is an object in $\C$ and $\varphi\colon F(c)\to d$ is a morphism in $\D$. A morphism $(c,\varphi)\to(c',\varphi')$ in  $F\downarrow d$ is a morphism $\sigma\colon c\to c'$, such that $\varphi'\circ F(\sigma) =\varphi$. There is a forgetful functor $\#\colon F\downarrow d \to \C$, 
sending an object $(c,\varphi)$ to $c$. By analogy one defines $d\downarrow F$, the \hadgesh{undercategory of $d$ with respect to $F$}, and similarly a forgetful functor to $\C$. For $M\in \A^\C$, left and right Kan extensions of $M$ along $F$ are defined by
\[L_F(M)(d) \defeq \colim_{F\downarrow d}M_\#, \quad\text{and}\quad
R_F(M)(d) \defeq \lim_{d\downarrow F} M_\#,\]
where in both cases $M_\#$ denotes the composition of $M$ with the respective forgetful functor. Thus if $N\in \A^\C$ and $M\in \A^\D$, we have the adjointness relations:
\[\Hom_{\A^\D}(L_F(N), M)\cong \Hom_{\A^\C}(N, F^*(M)),  \quad\text{and}\quad
\Hom_{\A^\C}(F^*(M), N)\cong \Hom_{\A^\D}(M, R_F(N)).\]

The following two  lemmas are standard homological algebra. Proofs are included for the convenience of the reader.
\begin{Lem}\label{Lem:gen-cogen}
Let $\A, \B$ be small abelian categories and assume that $\A$ has enough projectives. Let $F\colon \A\to\B$ be a functor with a right adjoint $G\colon\B\to\A$. Then the following statements are equivalent.
\begin{enumerate}[(1)]
\item $F$ sends projective objects in $\A$ to projective objects in $\B$. \label{gen-cogen-1}
\item $G$ is an exact functor. \label{gen-cogen-2}
\end{enumerate}
Dually, if $\B$ has enough injectives, then the following statements are equivalent.
\begin{enumerate}[(1)]
\setcounter{enumi}{2}
\item $G$ sends injective objects in $\B$ to injective objects in $\A$. \label{gen-cogen-3}
\item $F$ is an exact functor. \label{gen-cogen-4}
\end{enumerate}
\end{Lem}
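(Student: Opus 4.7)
The plan is to use the standard adjunction characterization of projectivity together with the fact that right adjoints are always left exact (and left adjoints right exact). I will prove (\ref{gen-cogen-1}) $\Leftrightarrow$ (\ref{gen-cogen-2}) directly, and then obtain (\ref{gen-cogen-3}) $\Leftrightarrow$ (\ref{gen-cogen-4}) by passing to opposite categories.

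First I would record the background facts: since $G$ is a right adjoint, it preserves all limits that exist, and in particular it is left exact; dually, $F$ is right exact. Next, I would recall that an object $P\in\A$ is projective if and only if $\Hom_\A(P,-)$ is exact, and similarly in $\B$. The adjunction gives a natural isomorphism
\[
\Hom_\B(F(P),-)\;\cong\;\Hom_\A(P, G(-)),
\]
so $F(P)$ is projective in $\B$ if and only if the composite $\Hom_\A(P, G(-))$ is exact as a functor $\B\to\Ab$.

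For (\ref{gen-cogen-2}) $\Rightarrow$ (\ref{gen-cogen-1}): if $G$ is exact and $P$ is projective in $\A$, then $\Hom_\A(P,G(-))$ is a composite of two exact functors, hence exact, so $F(P)$ is projective. For the converse (\ref{gen-cogen-1}) $\Rightarrow$ (\ref{gen-cogen-2}): given a short exact sequence
\[
0\to B'\to B\to B''\to 0
\]
in $\B$, left exactness of $G$ yields exactness of $0\to G(B')\to G(B)\to G(B'')$, so it only remains to check surjectivity of the rightmost map. Using that $\A$ has enough projectives, choose a projective $P\in\A$ together with an epimorphism $\pi\colon P\twoheadrightarrow G(B'')$. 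Its transpose under the adjunction is a morphism $\widetilde\pi\colon F(P)\to B''$, and since $F(P)$ is projective by (\ref{gen-cogen-1}), $\widetilde\pi$ lifts through the surjection $B\twoheadrightarrow B''$ to a morphism $F(P)\to B$. Transposing this lift back via the adjunction produces a morphism $P\to G(B)$ whose composition with $G(B)\to G(B'')$ equals $\pi$. Since $\pi$ is surjective, so is $G(B)\to G(B'')$, completing the proof that $G$ is exact.

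Finally, the equivalence (\ref{gen-cogen-3}) $\Leftrightarrow$ (\ref{gen-cogen-4}) follows by applying the first equivalence to the opposite categories: $F^{\op}\colon \A^{\op}\to \B^{\op}$ is right adjoint to $G^{\op}\colon\B^{\op}\to\A^{\op}$, injectives in $\B$ correspond to projectives in $\B^{\op}$, and $\B^{\op}$ has enough projectives precisely because $\B$ has enough injectives. The only mild subtlety is keeping the direction of the adjunction straight when dualising, but no further work is needed. I do not expect a serious obstacle; the main step is the lifting-via-adjunction argument above, which is the only place where the hypothesis ``enough projectives'' is used.
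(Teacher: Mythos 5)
Your proposal is correct and follows essentially the same route as the paper: both directions of (\ref{gen-cogen-1})$\Leftrightarrow$(\ref{gen-cogen-2}) use the adjunction isomorphism $\Hom_\B(F(P),-)\cong\Hom_\A(P,G(-))$, and the key step for (\ref{gen-cogen-1})$\Rightarrow$(\ref{gen-cogen-2}) is the same lifting of an epimorphism $P\twoheadrightarrow G(B'')$ through the adjunction using projectivity of $F(P)$. The paper disposes of (\ref{gen-cogen-3})$\Leftrightarrow$(\ref{gen-cogen-4}) ``by analogy,'' which your passage to opposite categories simply makes explicit.
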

\begin{proof}
We prove the equivalence of \ref{gen-cogen-1} and \ref{gen-cogen-2}. The equivalence of \ref{gen-cogen-3} and \ref{gen-cogen-4} follows by analogy. Let $P\in \A$ be a projective object. Then  $F(P)\in\B$ is projective if and only if the functor $\Hom_\B(F(P), -)\cong \Hom_\A(P, G(-))$ is exact.  Thus, if $G$ is  exact then $F(P)$ is projective. This shows that \ref{gen-cogen-2} implies \ref{gen-cogen-1}. 

Conversely, since $\A$ is assumed to have enough projectives, every object $X\in \A$ admits an epimorphism $P\to X$, where $P$ is projective in $\A$. Let 
\[0\to B\xto{\alpha} C\xto{\sigma} D\to 0\]
be an exact sequence in $\B$. Since $G$ is a right adjoint, it is left exact, so it suffices to show that 
\[G(C)\xto{G(\sigma)} G(D)\to 0\]
is exact. Let $\varphi\colon P\to G(D)$ be an epimorphism in $\A$, where $P$ is projective. By \ref{gen-cogen-1}, $F$ sends projective objects to projective objects. Thus 
\[\Hom_\B(F(P), C)\xto{\sigma_*} \Hom_\B(F(P), D)\to 0\]
is exact, and by adjointness 
\[\Hom_\A(P, G(C))\xto{G(\sigma)_*} \Hom_\A(P, G(D))\to 0\]
is exact. Hence there exists $\psi\in \Hom_\A(P, G(C))$ such that $G(\sigma)_*(\psi) = G(\sigma)\circ\psi = \varphi$. Since $\varphi$ is an epimorphism, so is $G(\sigma)$, as claimed. 
\end{proof}

As a corollary we have the following lemma.

\begin{Lem}\label{Lem:Ext-adjointness}
Let $\gamma\colon\C\to \D$ be a functor between small categories. Let $\A$ be a bicomplete abelian category, and let $\A^\C$ and $\A^\D$ denote the respective functor categories.  Let $\gamma^*\colon \A^\D\to \A^\C$ denote the restriction, and let $L_\gamma$ and $R_\gamma$ denote its left and right Kan extensions. Then for any $M\in \A^\C$ and $N\in \A^\D$,
\[\Ext^*_{\A^\D}(L_\gamma(M), N) \cong \Ext^*_{\A^\C}(M,\gamma^*N)\]
if $\A$ has enough injectives and either of the following two equivalent conditions holds.
\begin{enumerate}[(1)]
\item $\gamma^*$ sends injective modules  to injective modules. \label{Lem:Ext-adjointness-1}
\item $L_\gamma$ is an exact functor. \label{Lem:Ext-adjointness-2}
\end{enumerate}
Similarly, if $\A$ has enough projectives then 
\[\Ext^*_{\A^\D}(N, R_\gamma(M)) \cong \Ext^*_{\A^\C}(\gamma^*N,M)\]
if either of the following equivalent conditions holds.
\begin{enumerate}[(1)]
\setcounter{enumi}{2}
\item $\gamma^*$ sends projective modules  to projective modules. \label{Lem:Ext-adjointness-3}
\item $R_\gamma$ is an exact functor.\label{Lem:Ext-adjointness-4}
\end{enumerate}
\end{Lem}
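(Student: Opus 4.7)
The plan is to prove each isomorphism by pushing an appropriate resolution through the relevant Kan extension and invoking the adjunction at the chain level. Since the two halves of the statement are dual, I will focus on the first isomorphism and indicate the dual argument for the second.

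First I would verify that conditions (\ref{Lem:Ext-adjointness-1}) and (\ref{Lem:Ext-adjointness-2}), and separately (\ref{Lem:Ext-adjointness-3}) and (\ref{Lem:Ext-adjointness-4}), are equivalent. The key preliminary observation is that $\gamma^*$ is always exact: limits and colimits in the functor categories $\A^\C$ and $\A^\D$ are computed pointwise (here the bicompleteness of $\A$ is used), so $\gamma^*$ commutes with kernels and cokernels. Applying Lemma \ref{Lem:gen-cogen} to the adjoint pair $L_\gamma\dashv\gamma^*$, its dual half yields the equivalence: $\gamma^*$ preserves injectives if and only if $L_\gamma$ is exact, giving (\ref{Lem:Ext-adjointness-1})$\iff$(\ref{Lem:Ext-adjointness-2}). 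Its main half, combined with the exactness of $\gamma^*$, further shows that $L_\gamma$ always preserves projectives. Applying the lemma instead to $\gamma^*\dashv R_\gamma$ yields (\ref{Lem:Ext-adjointness-3})$\iff$(\ref{Lem:Ext-adjointness-4}), and dually $R_\gamma$ always preserves injectives.

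For the first Ext isomorphism, I would begin with a projective resolution $P_\bullet\to M$ in $\A^\C$, which exists because $\A$ has enough projectives (a property that transfers to $\A^\C$ via the standard free-functor construction). Since $L_\gamma$ always preserves projectives, each $L_\gamma(P_i)$ is projective in $\A^\D$, and under hypothesis (\ref{Lem:Ext-adjointness-2}) the functor $L_\gamma$ is exact, so $L_\gamma(P_\bullet)\to L_\gamma(M)$ is a projective resolution. The adjunction $L_\gamma\dashv\gamma^*$ supplies a natural isomorphism of cochain complexes
\[\Hom_{\A^\D}(L_\gamma(P_\bullet), N) \cong \Hom_{\A^\C}(P_\bullet,\gamma^*N),\]
and the desired isomorphism on Ext groups is obtained by taking cohomology of either side.

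The second isomorphism follows by the dual argument: start with an injective resolution $M\to I^\bullet$ in $\A^\C$ (which exists since $\A$ has enough injectives), use that $R_\gamma$ always preserves injectives and, under (\ref{Lem:Ext-adjointness-4}), is exact, so $R_\gamma(I^\bullet)$ is an injective resolution of $R_\gamma(M)$, then apply the adjunction $\gamma^*\dashv R_\gamma$ to obtain $\Hom_{\A^\D}(N, R_\gamma(I^\bullet))\cong\Hom_{\A^\C}(\gamma^*N, I^\bullet)$ and take cohomology. I do not foresee a substantive obstacle; the only potentially delicate point is the transfer of enough projectives (respectively, injectives) from $\A$ to the functor categories, which is standard when $\A$ is bicomplete abelian.
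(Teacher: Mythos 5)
Your proof is correct, but it runs in the dual direction to the paper's argument. The paper takes an injective resolution $N\to I^*$ in $\A^\D$, uses condition (\ref{Lem:Ext-adjointness-1}) directly to see that $\gamma^*N\to\gamma^*(I^*)$ is an injective resolution (since $\gamma^*$ is exact and preserves injectives), and then applies the adjunction $\Hom_{\A^\C}(M,\gamma^*(I^*))\cong\Hom_{\A^\D}(L_\gamma(M),I^*)$ before taking cohomology. You instead resolve $M$ projectively in $\A^\C$ and push the resolution through $L_\gamma$, which requires condition (\ref{Lem:Ext-adjointness-2}) for exactness plus the auxiliary observation (correctly extracted from Lemma \ref{Lem:gen-cogen} and the pointwise exactness of $\gamma^*$) that $L_\gamma$ always preserves projectives. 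Both routes invoke Lemma \ref{Lem:gen-cogen} for the equivalence of the two conditions. Your version has the merit of matching the stated hypothesis literally: ``$\A$ has enough projectives'' is exactly what you need to build $P_\bullet\to M$ in $\A^\C$, whereas the paper's argument actually consumes enough injectives in $\A^\D$ (and dually for the second isomorphism), so the hypothesis as written fits your resolution choice better. The paper's version is marginally shorter since it uses condition (\ref{Lem:Ext-adjointness-1}) without the detour through preservation of projectives by $L_\gamma$. The one point you wave at rather than prove --- that enough projectives (resp.\ injectives) in $\A$ transfer to the functor categories --- is standard for a small indexing category and a bicomplete $\A$, and the paper's own proof implicitly relies on the analogous transfer, so this is not a gap relative to the paper's standard of rigour.
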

\begin{proof}
We prove the first statement. The second follows by analogy.  Let $N\to I^*$ be an injective resolution of $N$ in $\A^\D$. Since $\gamma^*$ is exact and sends injective modules to injective modules, $\gamma^*N\to \gamma^*(I^*)$ is an injective resolution of $\gamma^*N$ in $\A^\C$. Then
\[\Ext^*_{\A^\C}(M, \gamma^*N) = H^*(\Hom_{\A^\C}(M, \gamma^*(I^*)) \cong H^*(\Hom_{\A^\D}(L_\gamma (M), I^*)) = \Ext_{\A^\D}^*(L_\gamma (M), N),\]
as claimed. The equivalence of Conditions \ref{Lem:Ext-adjointness-1} and \ref{Lem:Ext-adjointness-2} follows from Lemma \ref{Lem:gen-cogen}.
\end{proof}

\newcommand{\bbL}{\mathbb{L}}
\newcommand{\bblad}{\cals_\sqcup}
\newcommand{\bbladd}{\cala_\sqcup}
\section{Categorifying the discrete gradient and divergence}
\label{Sec:general-grad-div}
In this section we present a categorical analog  of the discrete gradient and divergence on digraphs. With the appropriate setup, the standard notions of these concepts then occur as special cases. In sections \ref{Sec:Grad} and \ref{Sec:divergence} we will specialise these constructions to the case where the category $\C$ is a  finite poset. This case is particularly well behaved and is strongly related to the notion of generalised persistence modules.

We start by recalling the graph-theoretic definitions of gradient and divergence. Let $\G = (V,E)$ be a digraph. Let $A$ be an abelian group, and let $f\colon V\to A$ and $F\colon E\to A$ be arbitrary functions. Then one defines the gradient $\nabla(f)\colon E\to A$ and the divergence  $\nabla^*(F)\colon V\to A$ by
\begin{equation}\nabla(f)(v\to w) \defeq f(w) - f(v),\quad\text{and}\quad 
\nabla^*(F)(v) = \sum_{u\to v} F(u\to v) - \sum_{v\to w} F(v\to w).\label{Eq:grad-div}
\end{equation}
If $V$ and $E$ are finite and $A=\R$, then the gradient and divergence are related to each other as adjoint operators by means of the standard inner product. The reader is referred to \cite{Lim} for a comprehensive discussion of the gradient and the divergence on digraphs.

\subsection{Categories generated by a quiver}
\label{Subsec:Path-Cat}
Let $\G = (V, E)$ be a quiver. The path category associated to $\G$ is the category $\PP(\G)$ with $V$ as an object set and whose morphisms are  all directed compositions of the directed edges in $E$. Thus $\PP(\G)$ is the category that is freely generated by $\G$. In particular, if $\G$ is acyclic, then the only endomorphisms in $\PP(\G)$ are identities. If, in addition $\G$ is finite, then $\PP(\G)$ is also finite, but if $\G$ contains cycles then $\PP(\G)$ is infinite since each morphism corresponding to a cycle can be iterated arbitrarily many times. In this article we will  work primarily with acyclic quivers.

Any small category $\C$ (with discrete topology on object and morphism sets) is a quotient category of the path category of a quiver whose vertex set is in $1-1$ correspondence with the object set of  $\C$, and whose directed edges form a subset of its morphisms, where the equivalence relations are given by the commutativity relations in $\C$ (See \cite[Section 5]{Borceux} for more detail). A canonical example of this type is given by considering the quiver of all morphisms in a category $\C$. Then $\C$  is reconstructed from that quiver by declaring two paths equivalent if the corresponding morphisms in $\C$ coincide. 

A poset  $\P$  can be thought of as a quotient of the path category of its  Hasse diagram $\calh_\P$ (the digraph of all objects in $\P$ and all irreducible morphisms between pairs of them)   by the relations imposed by requiring that between any two objects $u$ and $v$ there is exactly one morphism if there is a directed path in $\HH_\P$ from $u$ to $v$. \chg{Of course, if we let $\G_\P$ denote the quiver given by all morphisms in $\P$, then  $\P$ is also a quotient category of $\PP(\Q)$ for any quiver $\HH_\P\subseteq\Q\subseteq\G_\P$, with the relations determined in exactly the same manner.  Finite posets are of primary interest in this article, but categories generated by acyclic quivers are much more general than posets. }

\begin{Defi}\label{Def:generating-digraph}
	If $\C$ is a category  that is a quotient category of the path category $\PP(\G)$ of some quiver $\G$, we say that $\G$ is a \hadgesh{generating quiver} for $\C$, or that $\C$ is \hadgesh{generated} by $\G$.
\end{Defi}

If the quiver $\G$ is acyclic then every category it generates will also be cycle-free. Hence we say that a category $\C$ is acyclic if it is generated by an acyclic quiver.

\chg{Next we associate with any small   category $\C$ and a generating  quiver $\G$ another small category $\widehat{\C}_\G$ that is generated by the associated line digraph $\widehat{\G}$. The category $\widehat{\C}_\G$ will depend on the choice of the generating quiver, but for any such choice one has two functors $\tau, \sigma\colon\widehat{\C}_\G\to\C$ that will allow us to define a  gradient and two divergence operators. }

\begin{Defi}\label{Def:line-cat}
	Let $\C$ be a small category generated by a quiver  $\G = (V,E, s, t)$, and let $\widehat{\G} = (\widehat{V}, \widehat{E}, \widehat{s},  \widehat{t})$ denote its associated line digraph. Let $\PP(\G)$ and $\PP(\widehat{\G})$ denote the associated path categories. Define a category \hadgesh{$\widehat{\C}_\G$ with the same object set as $\PP(\widehat{\G})$}. Let $e: u\to x$ and $e'\colon y\to v$ be objects in $\PP(\widehat{\G})$.
Assume that 
	\begin{align*}\alpha\defeq &\{e\to e_1\to e_2 \to\cdots\to e_{m-1} \to e'\}, \quad\text{and}\quad\\
		\gamma\defeq &\{e\to f_1\to f_2 \to\cdots\to f_{k-1} \to e'\}
	\end{align*}
	are  morphisms in the path category $\PP(\widehat{\G})$, where $e_i\colon a_i\to a_{i+1},\;\; 1\le i\le m-1$ and $f_j\colon b_j\to b_{j+1},\;\; 1\le j \le k-1$, $a_1 = b_1 = x$, and $a_m = b_k =y$. For the pair of objects $e, e'$, define an equivalence relation on the morphism set $\PP(\widehat{\G})(e, e')$, to be the transitive closure of the relation $\alpha\sim \gamma$ if the compositions 
	\[x =  a_1\xto{e_1}\cdots\xto{e_{m-1}} a_m = y, \quad\text{and}\quad x =  b_1\xto{f_1}\cdots\xto{f_{k-1}} b_k = y \]
	coincide in $\C$. Define \hadgesh{$\widehat{\C}_\G(e, e')$} to be the set of equivalence classes of this relation. We refer to $\widehat{\C}_\G$ as the \hadgesh{line category associated to $\C$ with respect to the generating quiver $\G$}.
\end{Defi}

\subsection{The categorical gradient}
We are now ready to define a categorical version of the gradient. We start by defining a pair of graph maps $\widehat{\G}\to\G$, which in turn induce  functors $\tau, \sigma\colon\widehat{\C}_\G\to \C$, where $\C$ is generated by $\G$.

\begin{Defi}
	\label{Def:target-source}
	Let $\G = (V, E, s, t)$ be a quiver, and let $\widehat{\G}=(\widehat{V}, \widehat{E}, \widehat{s},  \widehat{t})$ be its associated line digraph. Let $\tau, \sigma\colon \widehat{\G}\to \G$ denote the \hadgesh{target} and \hadgesh{ source} graph morphisms, defined by $\tau(e) = t(e)$ and $\sigma(e) = s(e)$. For an edge $(e, e')\in \widehat{\G}$, define  $\tau(e,e') = e'$ and $\sigma(e,e') = e$. 
\end{Defi}

The maps $\tau$ and $\sigma$ induce target and source functors
\[\tau, \sigma\colon\PP(\widehat{\G})\to\PP(\G).\]
The following lemma shows that these functors induce the corresponding functors on the line category.

\begin{Lem}\label{Lem:generating-digraph}
	Let $\C$ be a small category generated  by a quiver $\G=(V,E, s, t)$.
	Let $\widehat{\C}_\G$ denote the  line category  associated to $\C$ with respect to $\G$. Then the target and source functors $\tau$ and $\sigma$ on path categories induce functors 
	\[\tau, \sigma\colon \widehat{\C}_\G\to \C.\]
\end{Lem}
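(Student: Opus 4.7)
The plan is to show that the functors $\phi, \beta \colon P(\widehat{\G}) \to P(\G)$ of Definition~\ref{Def:front-back}, postcomposed with the quotient functor $q \colon P(\G) \to \C$, factor through the quotient map $P(\widehat{\G}) \to \widehat{\C}$. Since objects of $\widehat{\C}$ coincide with those of $P(\widehat{\G})$ and objects of $\C$ with those of $P(\G)$, the action on objects is already determined ($\phi(e) = t$ and $\beta(e) = u$ for $e \in E(u,t)$). Identities and composition are inherited from the underlying path-category functors, so the substantive step is to verify that $q\circ\phi$ and $q\circ\beta$ are constant on the equivalence classes defining $\widehat{\C}$.

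First, I would compute $\phi$ and $\beta$ explicitly on the two representatives
$$\alpha = (e \to e_1 \to \cdots \to e_{m-1} \to e'), \qquad \gamma = (e \to f_1 \to \cdots \to f_{k-1} \to e')$$
from Definition~\ref{Def:line-cat}. Each edge of $\widehat{\G}$ has the form $(x,y)$ with $\phi(x,y) = y$ and $\beta(x,y) = x$, so a direct unwinding gives
$$\phi(\alpha) = e'\circ e_{m-1}\circ\cdots\circ e_1, \qquad \beta(\alpha) = e_{m-1}\circ\cdots\circ e_1 \circ e$$
in $P(\G)$, with analogous formulas for $\gamma$ in terms of the $f_j$. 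The key structural observation is that each image factors through the ``inner'' composition $e_{m-1}\circ\cdots\circ e_1$ (respectively $f_{k-1}\circ\cdots\circ f_1$), with one of the fixed outer edges $e$ or $e'$ appended; these outer edges are the same for every representative of a morphism class from $e$ to $e'$.

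The defining hypothesis $\alpha\sim\gamma$ states precisely that the two inner compositions become equal under $q$, i.e.\ $q(e_{m-1}\circ\cdots\circ e_1) = q(f_{k-1}\circ\cdots\circ f_1)$ in $\C$. Left-multiplying by $e'$ and right-multiplying by $e$ respectively yields $q(\phi(\alpha)) = q(\phi(\gamma))$ and $q(\beta(\alpha)) = q(\beta(\gamma))$. Because the relation defining $\widehat{\C}$ is the transitive closure of this one-step relation and equality in $\C$ is transitive, $q\circ\phi$ and $q\circ\beta$ descend by the universal property of the quotient to functors $\widehat{\C} \to \C$. The only delicate point is the bookkeeping of which edges of $\G$ survive under $\phi$ versus $\beta$: $\phi$ discards the initial edge $e$ and retains $e'$, while $\beta$ does the opposite, so in each case exactly one outer edge remains and is unaffected by the equivalence, which is why the ``inner only'' relation of Definition~\ref{Def:line-cat} is precisely the relation that the two functors need to respect.
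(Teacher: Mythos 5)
Your proposal is correct and follows essentially the same route as the paper: both arguments show that the composites $P(\widehat{\G})\xto{\phi,\beta}P(\G)\to\C$ are constant on the equivalence classes defining $\widehat{\C}$ and hence descend to the quotient. In fact your bookkeeping is slightly more precise than the paper's, which identifies $\pi(\phi(\alpha))$ directly with the inner composition $t=a_1\to\cdots\to a_m=s$, whereas (as you note) $\phi(\alpha)$ is that inner composition post-composed with the fixed outer edge $e'$ (and $\beta(\alpha)$ is it pre-composed with $e$), a discrepancy that is harmless for exactly the reason you give.
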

\begin{proof}
	It suffices to show that if $\psi$ is either $\tau$ or $\sigma$ then there exists a functor $\bar{\psi}\colon\widehat{\C}_\G\to\C$, such that the square 
	\[\xymatrix{
		\PP(\widehat{\G})\ar[r]^\psi\ar[d]^{\widehat{\pi}} &\PP(\G)\ar[d]^\pi\\
		\widehat{\C}_\G\ar[r]^{\bar{\psi}}& \C
	}\]
	commutes. We prove the statement for $\tau$. The proof for $\sigma$ is essentially the same. 
	The objects in $\PP(\widehat{\G})$, and hence of $\widehat{\C}_\G$, are the edges $e\in E$. If  $\tau(e)=v$, define $\bar{\tau}(e) = v$. The square commutes on objects by definition. 
Let $e, e'\in\PP(\widehat{\G})$ be objects, where $e\colon u\to t$ and $e'\colon s\to v$. Let 
\begin{align*}\alpha\defeq &\{e\to e_1\to e_2 \to\cdots\to e_{m-1} \to e'\}, \quad\text{and}\quad\\
		\gamma\defeq &\{e\to f_1\to f_2 \to\cdots\to f_{k-1} \to e'\}
	\end{align*}
	be  morphisms in  $\PP(\widehat{\G})(e, e')$, such that $\widehat{\pi}(\alpha)=\widehat{\pi}(\gamma)$. Then, by Definition  \ref{Def:line-cat}, the compositions
	\[t =  a_1\xto{e_1}\cdots\xto{e_{m-1}} a_m = s, \quad\text{and}\quad t =  b_1\xto{f_1}\cdots\xto{f_{k-1}} b_k = s \]
	coincide in $\C$.  Thus
\[\pi(\tau(\alpha)) = t\xto{e_1}\cdots\xto{e_{m-1}}s\xto{e'} v\quad\text{coincides in $\C$ with}\quad \pi(\tau(\gamma)) = t\xto{f_1}\cdots\xto{f_{k-1}}s\xto{e'} v.\]
This shows that $\bar{\tau}$ is well defined and that the square commutes on morphisms, and hence the proof is complete. 
\end{proof}

Let $\cala$ be an abelian  category, and let $\C$ be a small category, with a generating quiver $\G$ and an associated line digraph $\widehat{\G}$.  The functors $\tau, \sigma\colon\widehat{\C}_\G\to\C$ induce \hadgesh{restriction functors} 
\[\tau^*, \sigma^*\colon \cala^\C\to \cala^{\widehat{\C}_\G},\]
where $\cala^\C$ and $\cala^{\widehat{\C}_\G}$ are the (abelian) functor categories from $\C$ and $\widehat{\C}_\G$ to $\cala$, respectively. We are now ready to define the gradient.

\begin{Defi}\label{Def:grad-general}
	Fix a small category $\C$ and an associated line category $\widehat{\C}$ with respect to some generating quiver  $\G$. Let $\cala$ be an abelian category. Let $\Gr(\cala^\C)$ denote the Grothendieck group of isomorphism classes of functors $\calc\to \cala$, with the operation $[F] + [G] = [F\sqcup G]$, where $(F\sqcup G)(c) = F(c)\sqcup G(c)$, and where $\sqcup$ denotes the coproduct in $\cala$.
	Define the \hadgesh{gradient}
	\[\nabla\colon \Gr(\cala^\C) \to \Gr(\cala^{\widehat{\C}})\]
	by \[\nabla[F] = [\tau^*(F)] - [\sigma^*(F)]\]
	for each functor $F\in\cala^\C$. Extend $\nabla$ by additivity to the whole Grothendieck group.
\end{Defi}

Next we show that the standard definition of a gradient for a digraph with vertex and edge weight functions taking values in the group of integers is a particular case of our setup.

\begin{Ex}\label{Ex:standard-grad}
	Let $\G=(V,E)$ be a quiver with line digraph $\widehat{\G}$,  consider $V$ and $E$ as discrete categories, and let $\tau, \sigma\colon E\to V$ be the source and target function, considered as functors. Let $\cala=\VVect$ be the category of finite dimensional vector spaces over a fileld $k$. Restriction of $\tau^*$ and $\sigma^*$ gives functors 
	\[\tau^*, \sigma^*\colon\cala^V\to\cala^{E}.\]
	 Thus $\Gr(\cala^V) = \Gr(kV)$ and $\Gr(\cala^E) = \Gr(kE)$ are the groups that consist  of functions on $V$ or $E$, respectively, which take a vertex or an edge to a finite dimensional $k$-vector space. 
	Since vector spaces are determined up to isomorphism by their dimension, these Grothendieck groups are easy to compute:
	\[\Gr(kV) = \Z^V\cong \bigoplus_{v\in V}\Z,\quad\text{and}\quad
	\Gr(kE) = \Z^E\cong \bigoplus_{e\in E}\Z.\]
	Thus the gradient is given by 
	\[\nabla[f](e) = \dim_k(f(\tau(e))) - \dim_k(\sigma(e)),\]
	where $[f]\in\Gr(kV)$. 
 
 Let $\varphi\colon V\to \Z$ be an arbitrary function that takes non-negative values. Considering $V$ again as a discrete category, let $f\colon V\to \VVect$ be the functor defined by $f(v) = k^{\varphi(v)}$. Then for any $e\in E$,
 \[\nabla[f](e) = \nabla(\varphi)(e),\]
 where the left hand side is the gradient of the element $[f]\in \Gr(\cala^V)$, and the right hand side is the graph theoretic definition for the gradient of the function $\varphi$, as in \textcolor{red}{(\ref{Eq:grad-div})}.
\end{Ex}

We next study some basic properties of the gradient. 

\begin{Prop}\label{Prop:Grad}
	Let $\C$ be a small category generated by a quiver $\G$, and let $\Cone_\G$ denote the associated line category. Then the gradient  $\nabla=\nabla_\C\colon \Gr(k\C)\to \Gr(k\Cone_\G)$ satisfies the following properties:
	\begin{enumerate}[(1)]
		\item $\nabla$ is a well defined group homomorphism. \label{Prop:Grad-a}
		\item If $ X\in\Gr(k\C)$ is locally constant then $\nabla X=0$.  \label{Prop:Grad-b}
	\end{enumerate}
Furthermore, $\nabla$ is natural with respect to restrictions, namely, if $\D\subseteq \C$  is a subcategory that is generated by a sub-quiver $\G'\subseteq \G$, and $\iota\colon\D\xto{\subseteq}\C$ is the inclusion, then $\iota^*\circ\nabla_\C= \nabla_\D\circ\iota^*$.
\end{Prop}
\begin{proof}
	Since $\tau^*, \sigma^*\colon \Gr(k\C)\to \Gr(k\Cone_\G)$ are homomorphisms of abelian groups,  so is their difference.  This proves Part \ref{Prop:Grad-a}.
	
	Let $M\in k\C\mod$ be locally constant. Define a natural isomorphism $\mu\colon \sigma^*M\to \tau^*M$, by sending an object  $e$ in $\Cone$ to the morphism
	\[\mu_{e}\colon \sigma^*M(e) = M(\sigma(e))\xto{M(e)} M(\tau(e))=\tau^*M(e).\]
	Naturality is clear.  It follows that  $\nabla[M]=0$. By definition, an element of $ X\in\Gr(k\C)$ is locally constant  if it is represented as a difference of locally constant modules. By additivity $\nabla$ vanishes on any locally constant virtual module in $\Gr(k\C)$. This proves Part \ref{Prop:Grad-b}. 
	
The last statement follows from commutativity of the square
\[\xymatrix{
k\C\mod\ar[r]^{\iota^*}\ar[d]^{\alpha^*} & k\D\mod\ar[d]^{\alpha^*}\\
k\Cone_\G\mod\ar[r]^{\iota^*} & k\widehat{\D}_{\G'}\mod
}\]
where $\alpha$ is either $\tau$ or $\sigma$.
\end{proof}

\subsection{The categorical divergence}
Next we define  the divergence operators in our context. For this we require that the category $\cala$ is abelian and bicomplete, namely that all small limits and colimits exist in $\cala$. In that case the functors $\tau^*$ and $\sigma^*$ have left and right adjoints given by the left and right Kan extensions. Recall that  finite products and coproducts coincide in  an abelian category. 

\begin{Defi}\label{Def:div-general}
	Fix a small category $\C$ and an associated line category $\widehat{\C}$ with respect to some generating quiver $\G$. Let $\cala$ be a small bicomplete abelian category. 
	Define the \hadgesh{left divergence} and the  \hadgesh{right divergence} (with respect to $\G$) 
	\[\nabla^*, \nabla_*\colon \Gr(\cala^{\widehat{\C}}) \to \Gr(\cala^{\C})\]
	by 
 \[\nabla^*[T] = [L_\tau(T)]-[L_\sigma(T)], \quad\text{and}\quad 
 \nabla_*[T] = [R_\tau(T)]-[R_\sigma(T)]\]
	for each functor $T\colon \widehat{\C}\to\cala$. Extend by additivity to the whole Grothendieck group.
\end{Defi}

Since limits commute with limits and colimits with colimits, and since finite products and coproducts coincide in an abelian category, both operators are well defined group homomorphisms.

Divergence in calculus on graphs is defined by the requirement that it must be adjoint to the gradient with respect to the inner product. Since in our context there is no direct analog to an inner product (but see Section \ref{Sec:pairings}), a justification of the definition is required. We proceed by showing how the standard divergence operator for digraphs, as in \textcolor{red}{(\ref{Eq:grad-div})}  with vertex and edge weight functions taking values in the integers, is a particular example of the general setup of Definition \ref{Def:div-general}.

	Let $\G=(V,E)$ be a quiver with line digraph $\widehat{\G}$. Let $\cala$ be an abelian bicomplete category.  
	Consider $V$ and $E$ as discrete categories, as in  Example \ref{Ex:standard-grad}.
	Restriction of $\tau^*$ and $\sigma^*$ gives functors 
	\[\tau^*, \sigma^*\colon\cala^V\to\cala^{E}.\]
	Since $V$ and $E$ are discrete, we have for $f\in\cala^E$
	\[L_\tau(f)(v) \defeq \colim_{\tau\downarrow v} f = \coprod_{e\in E\atop \tau(e) = v }f(e),\quad\text{and}\quad
	L_\sigma(f)(v) \defeq \colim_{\sigma\downarrow v} f = \coprod_{e\in E\atop \sigma(e) = v }f(e).\]
	Similarly 
	\[R_\tau(f)(v) \defeq \lim_{v\downarrow\tau} f = \prod_{e\in E\atop \tau(e) = v }f(e),\quad\text{and}\quad
	R_\sigma(f)(v) \defeq \lim_{v\downarrow\sigma} f = \prod_{e\in E\atop \sigma(e) = v }f(e).\]
	
\begin{Ex}\label{Ex:standard-div}	Restrict attention to the case where $\cala=\VVect$ and the quiver $\G$ is finite. Then, finite products (cartesian products) and finite coproducts (direct sums) are isomorphic. Hence,  the right and left Kan extensions coincide. 
	As in Example \ref{Ex:standard-grad}, we have
	\[\Gr(kV) = \Z^V\cong \bigoplus_{v\in V}\Z,\quad\text{and}\quad
	\Gr(kE) = \Z^E\cong \bigoplus_{e\in E}\Z.\]
	Hence 
	\[\nabla^*[f](v) = \nabla_*[f](v) = \sum_{e\in E\atop \tau(e) = v } \dim_k f(e) -\sum_{e\in E\atop \sigma(e) = v } \dim_k f(e)\]
	for any $f\colon E\to \VVect$. 
 
 Let $\gamma\colon E\to\Z$ be an arbitrary function that takes non-negative values. Considering $E$ as a discrete category again, let $g\colon E\to \VVect$ denote the functor defined by $g(e) = k^{\gamma(e)}$. Then 
 \[\nabla^*[g] = \nabla_*[g] = \nabla^*(\gamma)\]
 where the left and centre in the equation are the left and right divergence of $[g]$ in $\Gr(kV)$, and the right hand side is the graph theoretic divergence of $\gamma$, as in \textcolor{red}{(\ref{Eq:grad-div})}.
 \end{Ex}
 
Notice that restricting to finite quivers is essential because 
the category of finite dimensional vector spaces is only finitely bicomplete.


\section{Bilinear pairings on $\Gr(k\C)$}
\label{Sec:pairings}
In this section we define two bilinear forms on $\Gr(k\C)$, where  $\C$ is a finite acyclic category (i.e., a category generated by a finite acyclic quiver). 

\begin{Defi}\label{Def:Hom-pairing}
Let $\C$ be a finite  category. Define   the \hadgesh{Hom pairing}
\[\langle -,-\rangle_\C\colon\Gr(k\C)\times \Gr(k\C)\to \Z\]
	by
	\[\langle [M],[N]\rangle_\C \defeq \dim_k\left(\Hom_{k\C}(M,N)\right),\]
	where $M$ and $N$ represent their isomorphism classes in $\Gr(k\C)$. Extend the definition to the full Grothendieck group by additivity.
\end{Defi}

Notice that finiteness of the category $\C$ guarantees that  Hom objects are finite dimensional, so the pairing is well defined. The Hom pairing is clearly bilinear and can be defined by analogy also on $\Gr(k\widehat{\C}_\G)$, where $\G$ is a finite quiver that generates $\C$. Since the left and right Kan extensions are left and right adjoints to the restrictions \(\tau^*\) and \(\sigma^*\) we have
\begin{equation}
\langle \nabla^* X, Y\rangle_\C = \langle  X, \nabla Y\rangle_{\widehat{\C}},\quad\text{and}\quad
\langle \nabla X, Y\rangle_{\widehat{\C}} = \langle  X, \nabla_* Y\rangle_{\C}
\label{Eq:Adj}
\end{equation}
for $ X\in\Gr(k{\widehat{\C}})$ and $ Y\in\Gr(k{\C})$. Notice that the Hom pairing is not  symmetric in general  (however, see Example \ref{Ex:pairing-discrete} below).

Next we define another useful pairing on $\Gr(k\C)$. This pairing requires that $\C$ is finite and acyclic.

\begin{Defi}\label{Def:Euler-pairing}
	Let $\C$ be a finite acyclic category. Define   the \hadgesh{Euler pairing}
	\[\chi_\C\colon\Gr(k\C)\times \Gr(k\C)\to \Z\]
	by
	\[\chi_\C([M],[N]) \defeq \chi(\Ext_{k\C}^*(M,N)) = \sum_{n\geq 0}(-1)^n\dim_k(\Ext^n_{k\C}(M,N)),\]
	where $M$ and $N$ represent their isomorphism classes in $\Gr(k\C)$, and extend to the full Grothendieck group by additivity.
\end{Defi}

To show that this is well defined, we must argue that the $\Ext^n$ groups vanish for $n$ sufficiently large. This is implied by Lemma 
\ref{Lem:Euler-well-def} below (see also \cite{Xu}, Theorem 4.2.4). If $\C$ is an acyclic category and $\varphi$ is a morphism in $\C$ the \hadgesh{composition length of $\varphi$} is the minimal integer $n$ such that $\varphi$ can be expressed as a composition of $n$ irreducible morphisms. If, in addition the category $\C$ is finite, then the composition length of $\C$ is defined to be the largest composition length of any of its morphisms.

\begin{Lem}\label{Lem:Euler-well-def}
Let $\C$ be a finite acyclic   category. Then the category algebra $k\C$ has finite projective dimension.
\end{Lem}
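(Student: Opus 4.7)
The statement is needed to ensure the Euler pairing $\chi_\C$ is well defined, i.e.\ that $\Ext^n_{k\C}(M,N)$ vanishes for $n$ sufficiently large and for all $M,N \in k\C\cmod$. The appropriate interpretation is therefore that the global dimension of $k\C$ is finite, and this is what I would prove. Since $\G$ is finite and acyclic, the category $\C$ is finite with only identity endomorphisms, and the relation $x \preceq y$ defined by $\Hom_\C(x,y) \neq \emptyset$ makes $\obj(\C)$ into a finite poset; let $N$ denote the length of a longest chain in this poset.

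First I would reduce to the case of simple modules. Because $k\C$ is finite dimensional, every $M \in k\C\cmod$ admits a finite composition series whose factors are the simples $S_x$ for $x \in \obj(\C)$ (where $S_x$ is supported at $x$ with value $k$). By dimension shifting in the long exact sequence of $\Ext$ associated to each short exact sequence in a composition series, $\mathrm{pd}(M)$ is bounded above by $\max_{x} \mathrm{pd}(S_x)$. So it suffices to show $\mathrm{pd}(S_x) \le N$ for every object $x$.

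To achieve this I would write down an explicit finite projective resolution of $S_x$. Letting $P_y \defeq k\C \cdot 1_y$ denote the indecomposable projective at $y$, define
\[
C_n(x) \defeq \bigoplus_{y_0 \to y_1 \to \cdots \to y_n = x} P_{y_0},
\]
where the sum ranges over chains of non-identity morphisms ending at $x$. A standard alternating-sum simplicial differential (the $0$-th face map uses composition along $y_0 \to y_1$ at the level of the projective factor, the interior face maps forget a middle object and compose the adjacent morphisms, and the last face map forgets $y_0$ and shifts the summand) assembles these into an augmented chain complex $C_\bullet(x) \to S_x$ of projective $k\C$-modules. Since $\C$ is acyclic, chains of non-identity morphisms in $\C$ have length at most $N$, so $C_n(x) = 0$ for $n > N$, giving $\mathrm{pd}(S_x) \le N$ once exactness is known.

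The main obstacle is verifying exactness of $C_\bullet(x) \to S_x$. Fixing an evaluation object $y$ reduces this to a bar-resolution-style acyclicity statement for the complex of vector spaces $C_\bullet(x)(y)$, whose $n$-th term has basis $\{(f, y_0 \to \cdots \to y_n = x) : f \in \Hom_\C(y,y_0) \setminus \{\mathrm{id}\}\}$ (together with the degenerate contributions from identities). I would construct an explicit contracting homotopy by prepending $f$ to the chain when $f$ is non-identity, and absorbing it into the augmentation when $f = 1_y$ and $y = x$; the subtlety lies in matching the sign conventions at the augmentation boundary and in checking that the cases $y \prec x$, $y = x$, and $y \not\preceq x$ (where $S_x(y) = 0$) are all handled uniformly by the same formula.
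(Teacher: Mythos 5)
Your overall strategy --- reduce to the simple modules $S_x$ via a composition series and dimension shifting, then exhibit an explicit finite projective resolution of each $S_x$ of length bounded by the longest chain --- is sound and genuinely different from the paper's argument, which never passes to simples: it iterates surjections from sums of indecomposable projectives onto an arbitrary $M$ and shows the successive kernels vanish on ever larger neighbourhoods of the minimal objects, so the process terminates after at most $N$ steps. The problem is that the complex you write down is not a resolution of $S_x$: the variance is backwards. With $P_{y_0}=k\C\cdot 1_{y_0}$ one has $P_{y_0}(y)=k\Hom_\C(y_0,y)$, not $k\Hom_\C(y,y_0)$ as in your basis description, and a morphism $y_0\to y_1$ induces a map $P_{y_1}\to P_{y_0}$ (precomposition), never $P_{y_0}\to P_{y_1}$; so the face maps ``compose along $y_0\to y_1$ at the level of the projective factor'' and ``forget $y_0$ and shift the summand'' do not exist as homomorphisms between the modules you name. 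The failure is already visible for $\C=\{a<b\}$: for $S_b$ your recipe gives $C_0(b)=P_b$ and $C_1(b)=P_a$, and evaluation at $a$ yields $0\to k\to 0\to 0$, which is not exact (indeed $S_b\cong P_b$ is already projective); for $S_a$ it gives only $C_0(a)=P_a$, and $P_a\not\cong S_a$.

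The fix is to let the chains emanate from $x$ and to attach the projective to the terminal object: $C_n(x)\defeq\bigoplus P_{y_n}$, the sum running over chains $x\to y_1\to\cdots\to y_n$ of non-identity morphisms starting at $x$, with differentials induced by precomposition. Evaluated at $y$, a basis element of $C_n(x)(y)$ is then a genuine composable string $x\to y_1\to\cdots\to y_n\to y$, and the complex is the normalized bar resolution of $S_x$ relative to the semisimple subalgebra $\bigoplus_y k1_y$; exactness follows from the usual contracting homotopy, and acyclicity of $\G$ bounds $n$ by $N$. With that correction, and with the exactness verification actually carried out rather than deferred as ``the main obstacle'', your route yields the lemma with the same bound on the projective dimension as the paper obtains.
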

\begin{proof}
For each object $x\in \C$, let $F_x\in k\C\mod$ denote the indecomposable projective module defined by 
\[F_x = k\Mor_\C(x,-),\]
namely $F_x(y) = k$ if there is a morphism $x\to y$ and $F_x$ of any such morphism is the identity, and $F_x$ takes the value $0$ on objects $z$, where  $\C(x,z)=\emptyset$.    Let $M$ be a finitely generated $k\C$-module. For each object $x\in\C$, choose a basis $\{v^x_1, \ldots, v^x_{n_x}\}$ for $M(x)$.  For $1\le i\le n_x$ let 
\[q^x_{i}\colon F_x\to M\]
denote the natural transformation determined by taking $1_x\in\C(x,x)$, to $v^x_{i}$. Thus one obtains a surjection of $k\C$-modules
\[q_M\colon\bigoplus_{x\in\C}F_x^{d_x}\to M,\] 
where $d_x = \dim_k(M(x))$ and $F_x^{d_x} = \bigoplus_{d_x}F_x$.
Let $M_1$ denote the $\Ker(q_M)$. Since $\C$ is finite an acyclic, it has minimal objects in the sense that $\Mor_\C(-,x)=\emptyset$, and  if $x$ is such an object, then $M_1(x)=0$. Thus $M_1$ vanishes on all minimal objects in $\C$. Notice that if $d_x=0$, then no copy of $F_x$ is included in the cover. Next, construct a projective cover of $M_1$ in a similar fashion, avoiding all minimal objects in $\C$, and all those of composition distance $1$ from a minimal object, for which $M_1$ vanishes. Let $M_2$ be the kernel of this cover. Then $M_2$ vanishes on all minimal objects and all objects of composition distance $1$ from a minimal object. Since $\C$ is assume to be finite and acyclic, its composition length is bounded. Hence, by induction   we obtain a finite projective resolution for $M$ of length bounded above by  the composition length of  $\C$.
\end{proof}

\begin{Rem}\label{Rem:pairing-with-Proj-Inj} 
The Hom and Euler pairings are generally not the same, but if $P, I\in k\P\mod$ are modules with $P$ projective and $I$ injective, and $X\in \Gr(k\P)$ is any element, then 
\[\langle[P], X\rangle_\P = \chi_\P([P], X),\quad\text{and}\quad \langle X, [I]\rangle_\P = \chi_\P(X, [I]).\]
This is because for any module $M\in k\P\mod$, $\Ext^i(M, I) = \Ext^i(P, M) = 0$ for all $i>0$. 
\end{Rem}

The pairing $\chi_\C$ is clearly bilinear, but it is not symmetric in general, and in all but some very special cases,
\begin{equation}
\chi_\C(\nabla^* X,   Y) \neq \chi_{\widehat\C}( X, \nabla Y),\quad\text{and}\quad
\chi_\C( X,  \nabla_* Y) \neq \chi_{\widehat\C}(\nabla X,  Y).
\label{Eq:Adj-chi}
\end{equation}
This is the case because the Kan extensions that define the divergence homomorphisms are not generally exact.

A special case occurs when the category $\C$ is generated by a finite  acyclic quiver, due to the fact that path algebras over acyclic quivers are hereditary (namely submodules of projective modules are projective) \cite[Theorem 2.3.2]{Derksen-Weyman}. Hence for any $M, N\in k\C$, one has $\Ext_{k\C}^i(M,N)=0$ for $i>1$. Thus in this case
\[\chi_\C([M],[N]) = \dim_k \Hom_{k\C}(M,N) - \dim_k \Ext^1_{k\C}(M,N),\]
and the Euler pairing is rather easy to compute,
as it depends only on the Hilbert function of the modules involved. To make this statement precise, we need the following definition.

\begin{Defi}\label{Def:Euler-form}
Let $\Q = (V, E)$ be a finite  quiver. Let $f, g \in \Z^V$ be integer valued functions  on $V$.  Define a function $\chi_\Q\colon \Z^V\times \Z^V\to \Z$
by
\[\chi_\Q(f,g) \defeq \sum_{x\in V} f(x)g(x) - \sum_{e\in E}f(\sigma(e))g(\tau(e)),\]
where $\sigma$ and $\tau$ are the source and target functions on $E$.
The function $\chi_\Q$ is called the \hadgesh{Euler form for $\Q$}.
\end{Defi}

\begin{Lem}
\label{Lem:Euler-form}
Let $\C$ be a  category generated by a finite acyclic quiver $\Q$. Then for any $X, Y \in k\C\mod$
\[\chi_\C( X, Y) = \chi_\Q(\Dimk_\Q X, \Dimk_\Q Y).\]
\chg{Furthermore, in that case the Euler pairing is non-singular.}
\end{Lem}
\begin{proof}
\chg{The first statement is well known and appears for instance as \cite[Proposition 2.5.2]{Derksen-Weyman}. For the second, notice that if $A$ is the adjacency matrix of an arbitrary quiver $\Q$ with respect to some ordering on its vertices and $f, g$ are real valued functions of its vertices considered as vectors with the same vertex order, then the Euler form is given by $\chi_\Q(f, g) = f^t(I-A)g$. Hence, the Euler form is non-singular if and only if $1$ is not an eigenvalue of $A$.   If $\Q$ is acyclic, then this can easily be observed as follows. Order the vertices of $\Q$ such that arrow always point from a lower index to a higher one. With this ordering the adjacency matrix is upper triangular with only $0$ as diagonal entries. Hence $I-A$ is upper triangular with only $1$ as diagonal entries, and thus it is non-singular.  The claim follows.}
\end{proof}

Notice that if the quiver $\Q$ is a finite tree, then the category it generates  is in fact a poset. Also, the lemma shows that symmetry almost always fails, even under rather favourable circumstances, because the second term in the definition of the Euler form is not symmetric. 

\chg{\begin{Lem}\label{Lem:Euler-adjointness}
Let $\P$ be a finite poset whose Hasse diagram  is a rooted tree. Then for any $X\in\Gr(k\P)$ and $Y\in\Gr(\Pone)$,
\[\chi_{\P}(X, \nabla_* Y) = \chi_{\Pone}(\nabla X, Y).\]
Similarly, if the Hasse diagram of $\P^{\op}$ is a rooted tree, then
\[\chi_{\P}(\nabla^*Y, X) = \chi_{\Pone}(Y, \nabla X).\]
\end{Lem}}
\begin{proof}
\chg{Let $\P$ be a poset whose Hasse diagram $\HH_\P$ is a rooted tree and consider  the functors $\tau^*, \sigma^*\colon k\P\mod\to k\Pone\mod$. We claim that in this case both functors  send projective modules to projective  modules. To do it suffices to consider their effect on indecomposable projective modules of the form $F_v$  for an arbitrary object $v\in\P$ (See Definition \ref{Def:Proj-Inj-Simp}\ref{Def:Proj-Inj-Simp-Proj}).}

\chg{Let $v\in\P$ be any object. Then 
\[\sigma^* F_v(x,y) = F_v(\sigma(x,y)) = F_v(x) = \begin{cases}
										k & x\ge v\\
										0 & x<v
									\end{cases}
\]
Let $u_1,\ldots u_k$ be the successors of $v$.  Then $(v,u_j)$ non-comparable objects in $\Pone$, and $x\ge v$ if either $x=v$, in which case $y=u_j$ for some $j$, or $x>v$, in which case $x\ge u_j$, and $(x,y)\ge (v,u_j)$ in $\Pone$. It follows that $\sigma^*F_v = \bigoplus_{j=1}^kF_{(v,u_j)}$,   so $\sigma^*F_v$ is projective for all $v\in \P$.}

\chg{Next, we have 
\[\tau^* F_v(x,y) = F_v(\tau(x,y)) = F_v(y) = \begin{cases}
										k & y\ge v\\
										0 & y<v
									\end{cases}
\]
If $v$ is not the root, let $u$ be its predecessor. If $y =v$, then $x=u$, by uniqueness of predecessors in rooted trees. On the other hand if $y>v$, then $x\ge v$, or else $x$ and $v$ are not comparable, but both have $y$ as a successor, which contradicts $\HH_\P$ being a rooted tree. Thus, in this case as well $(x,y)\ge (u,v)$ and $\tau^* F_v = F_{(u,v)}$. If $v$ is the root with successors $u_1, \ldots u_k$,  then $y>v$, since it has a predecessor $x$ and $v$ is minimal. Hence either $y=u_j$ for some $j$, in which case $x=v$, or $y>u_j$, in which case $x\ge u_j$ by the same argument as before. Hence, $\sigma^*F_v = \bigoplus_{j=1}^kF_{(v,u_j)}$. In either case $\tau^* F_v$ is a projective module for all $v\in\P$. }

\chg{By  Lemma \ref{Lem:gen-cogen}, the right Kan extensions $R_\sigma$ and $R_\tau$ are exact. Hence by Lemma \ref{Lem:Ext-adjointness} for any modules $M\in k\P\mod$ and $N\in k\Pone\mod$, 
\[\Ext^*_{k\P}(M, R_\sigma(N)) \cong\Ext^*_{k\Pone}(\sigma^*M, N)\quad\text{and}\quad \Ext^*_{k\P}(M, R_\tau(N)) \cong\Ext^*_{k\Pone}(\tau^*M, N).\]
It follows that 
\begin{align*}\chi_\P([M], \nabla_*[N])\defeq & \chi(\Ext^*_{k\P}(M, R_\tau N)) - \chi(\Ext^*_{k\P}(M, R_\sigma N)) =\\
& \chi(\Ext^*_{k\Pone}(\tau^*M, N)) - \chi(\Ext^*_{k\P}(\sigma^*M, N)) \defeq \chi_{\Pone}(\nabla[M], [N]).
\end{align*}}

\chg{The proof in the case where the Hasse diagram of $\P^{\op}$ is a rooted tree is very similar. One shows in this case that $\sigma^* G_v \cong G_{(v,u)}$ where $u$ is the unique successor of $v$, if $v$ is not maximal, and if $v$ is maximal $\sigma^* G_v \cong \bigoplus_{j=1}^kG_{(u_j,v)}$, where $u_j$ are all the predecessors of $v$. One then shows further that $\tau^* G_v \cong \bigoplus_{j=1}^k G_{(u_j,v)}$ where $u_j$ are again all predecessors of $v$. Then in all cases $\sigma^*$ and $\tau^*$ take injective modules to injective modules. The rest follows similarly, using Lemmas \ref{Lem:gen-cogen} and \ref{Lem:Ext-adjointness}.}
\end{proof}

\chg{The following easy example that shows that the hypotheses of Lemma \ref{Lem:Euler-adjointness} cannot be relaxed.} 

\chg{\begin{Ex}\label{Ex:Unrooted}
Let $\P$ denote the poset with objects $0,1, 2, 3$ with relations $0, 1 < 2 < 3$. The module $F_1\in k\P\mod$ is indecomposable projective, but $\tau^* F_1$ is the constant module on $\Pone$ and is not projective. Similarly, let $\Q$ denote the poset $\P^{\op}$. Then $G_2\in k\Q\mod$ is indecomposable injective, but  $\sigma^* G_2$ is the constant module on $\widehat{\Q}$ and is not injective. 
\end{Ex} }

Once again, it makes sense to compare these constructions to the corresponding concept in discrete calculus on graphs.  The next example shows that in this case Hom pairing and the Euler pairing coincide and in the appropriate setting they are equal to the inner product pairing.

\begin{Ex}\label{Ex:pairing-discrete}
	As in  Example \ref{Ex:standard-div}  consider the sets $V$ and $E$ as discrete categories. Hence the objects in the functor categories   $kV\mod$ and $kE\mod$  are  finite dimensional $k$-vector spaces indexed by $V$ and $E$, respectively. 
	\begin{align*}
		\chi_V([f],[g])  = \langle[f],[g]\rangle_V = \dim_k(\Hom_{kV}(f,g)) = & \sum_{v\in V}\dim_k(f(v))\dim_k(g(v)),\quad\text{and}\quad\\
		\chi_E([h],[s]) = \langle[h],[s]\rangle_E = \dim_k(\Hom_{kE}(h,s)) = &\sum_{e\in E}\dim_k(h(e))\dim_k(s(e))
	\end{align*}
The first equality in both equations results from the fact that all vector spaces are free modules over $k$, so all   $\Ext^i$ groups for $i\ge 1$ vanish.	Thus, if we let $\gamma', \gamma''\colon V\to \Z$ and $\delta', \delta''\colon E\to \Z$ be arbitrary functions with nonnegative values, and we define $g', g''\colon V\to \VVect$ and $h', h''\colon E\to \VVect$ to be the functors defined by $g'(v) = k^{\gamma'(v)}$ and $g''(v) = k^{\gamma''(v)}$, and similarly define $h',  h''$ using $\delta', \delta''$, then 
\begin{align*}
		\chi_V([g'],[g'']) &= \langle[f],[g]\rangle_V = (\gamma', \gamma''),\quad\text{and}\\
		\chi_E([h'],[h'']) & =  \langle[h],[s]\rangle_E = (\delta', \delta''),
	\end{align*}	
where  the right hand side in both equations denotes the ordinary inner product with the corresponding functions considered as vectors.
Notice that the \hadgesh{pairings in this case are commutative}. Also, by Example \ref{Ex:standard-div} the divergence operators  $\nabla^*$ and $\nabla_*$  coincide  in this case. In particular, the adjointness relations \textcolor{red}{(\ref{Eq:Adj})} trivially hold.
\end{Ex}


\section{The gradient for modules over posets}
\label{Sec:Grad}
From this section  on we specialise the ideas discussed so far to the case where the categories under investigation are finite posets.  Finiteness is not required in all statements to follow, but this is the most basic and best behaved case of our theory, and is also related to the theory of generalised persistence modules, and hence deserves particular attention.  

Any poset $\P$ is uniquely determined by its associated \hadgesh{Hasse diagram} $\calh_\P$, which is the transitive reduction of the acyclic digraph underlying the poset \(\P\). Namely the digraph $\calh_\P$ has as vertices the objects of $\P$, and directed edges are the non-identity irreducible relations in $\P$, that is, those relations $u<v$ in $\P$ such that  there is no intermediate  relation $u< y < v$. The corresponding edge in the digraph $\calh_\P$ is then denoted by $(u,v)$. One recovers $\P$ by taking the transitive closure of \(\calh_\P\).

We start by observing that the line category of a poset is again a poset.

\chg{\begin{Lem}
	\label{Lem:P-hat}
	Let $\P$ be a  poset. Then   $\widehat{\P}$ is also a poset.
\end{Lem}}
\begin{proof}
\chg{Let $\HH_\P$ be the Hasse diagram of $\P$. Then   $\widehat{\calh}_\P$ is the Hasse diagram of the line category $\Pone$, as in Definition \ref{Def:line-cat}. To prove this, note that a  Hasse diagram is, by definition,  an acyclic and transitively reduced digraph. Thus it suffices to show that if $\calh = (V,E)$ is a  transitively reduced acyclic digraph, then $\widehat{\calh}$ is also acyclic and transitively reduced.}
	
	\chg{It is immediate from the definition that the existence of a cycle in $\widehat{\calh}$ implies that $\calh$ itself contains a cycle. Let $(u,v,w)$ be an edge in $\widehat{\calh}$, and assume that it can be decomposed as $(u,v,z)\cdot(v,z,w)$ (See Definition \ref{Def:line-digraph} for the notation). Then $\calh$ contains the composable sequence}
	
\begin{center}
			\begin{tikzcd}
			u\rar{(u,v)}  & v \rar{(v,z)}  & z \rar{(z,w)} & w
		\end{tikzcd}
	\end{center}
\chg{	Hence either $(u,v)$ is not composable with $(z,w)$ or $v=z$. The first option contradicts the assumption that $(u,v,w)$ is an edge in $\widehat{\calh}$ and the second option contradicts the assumption that $(v,z,w)$ is an edge in $\widehat{\calh}$, or alternatively acyclicity of $\calh$ due to the self-loop \((v,v)\). Thus $(u,v,w)$ is irreducible, and so $\widehat{\calh}$ is transitively reduced.  }
\end{proof}

\chg{\begin{Rem}\label{Rem:Restrict-to-Hasse}
Let $\P$ be a finite poset and let $\HH_\P$ denote its Hasse diagram. Let $\G_P$ denote the poset $\P$ regarded as a quiver. Then  $\HH_\P$ is a generating digraph for $\P$ in the sense of Definition \ref{Def:generating-digraph}, as is any quiver $\HH_\P\subseteq \G\subseteq \G_\P$ and it is possible to consider resulting line categories $\Pone_\G$ (Definition \ref{Def:line-cat}) and the corresponding differential operators with respect to them. However, throughout the rest of this article we will consider only $\HH_\P$ as a generating quiver for $\P$.
\end{Rem}}

Figure \ref{Fig:gradient} illustrates some small posets $\P$, and for $M\in k\P\mod$  the associated target and source modules in $k\Pone\mod$, the difference of which is defined to be the gradient 
\[\nabla\colon \Gr(k\P)\to \Gr(k\Pone).\]

\begin{figure}[h!]
	\[\xymatrix{
		{x}\ar[r]^{x<y} & {y} && {\tau^*M(x,y)\atop=M(y)} && {\sigma^*M(x,y)\atop=M(x)} \\
		x\ar[r]^{x<y} &  y \ar[r]^{y<z} & z & {\tau^*M(x,y)\atop=M(y)}\quad\ar[r]^{M(y<z)} &\tau^*M(y,z)\atop= M(z)& {\sigma^*M(x,y)\atop=M(x)}\quad\ar[r]^{M(x<y)} &\sigma^*M(y,z)\atop= M(y)\\
		&&  z &&{\tau^*M(y,z)\atop=M(z)} && {\sigma^*M(y,z)\atop=M(y)} \\
		x\ar[r]^{x<y}&  y \ar[ru]^{y<z}\ar[rd]_{y<w}&&{\tau^*M(x,y)\atop=M(y)} \ar[ru]^{M(y<z)}\ar[rd]_{M(y<w)}&&{\sigma^*M(x,y)\atop=M(x)} \ar[ru]^{M(x<y)}\ar[rd]_{M(x<y)}\\
		&&  w &&{\tau^*M(y,w)\atop=M(w)}  &&{\sigma^*M(y,w)\atop=M(y)}\\
		&  y \ar[rd] && {\tau^*M(x,y)\atop=M(y)}\ar[r]^{M(y<w)}& {\tau^*M(y,w)\atop=M(w)}& {\sigma^*M(x,y)\atop=M(x)}\ar[r]^{M(x<y)}& {\sigma^*M(y,w)\atop=M(y)}\\
		x \ar[ru]\ar[rd] &&  w\\
		&  z \ar[ru] && {\tau^*M(x,z)\atop=M(z)}\ar[r]^{M(z<w)}& {\tau^*M(z,w)\atop=M(w)}&  {\sigma^*M(x,z)\atop=M(x)}\ar[r]^{M(x<z)}& {\sigma^*M(z,w)\atop=M(z)}
	}\]
	\caption{Left: Four sample posets given by their respective Hasse diagrams. Centre and Right: For an arbitrary  module $M\in k\P\mod$, the corresponding modules $\tau^*M$ and $\sigma^*M$  in $k\Pone\mod$. The gradient is given by the formal difference $[\tau^*M]-[\sigma^*M]$ in $\Gr(k\Pone)$ }
	\label{Fig:gradient}
\end{figure}
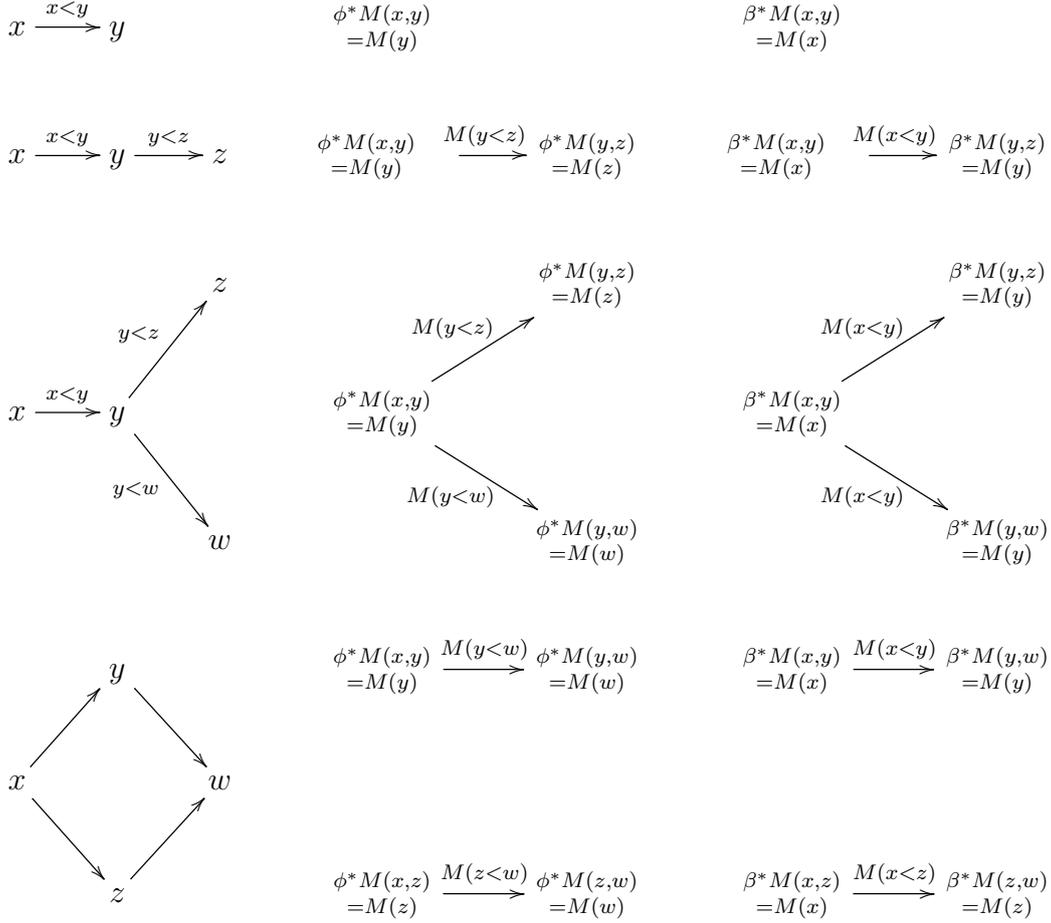

\chg{An important advantage that our approach to modules over poset offers is the possibility to study them locally. We next make this precise.  Let $\P$ be a finite poset, and let $\Q$ be a subgraph of $\HH_\P$. Let $\P_\Q\subseteq \P$ be the sub-poset generated by $\Q$, and let $\iota_\Q$ denote the inclusion functor. Functoriality of the line digraph construction gives an inclusion $\widehat{\iota}_\Q\colon\widehat{\P_\Q}\to \Pone$. Furthermore, one easily verifies that the inclusions are compatible with the functors $\tau$ and $\sigma$, namely that  $\iota_\Q\circ\alpha_\Q = \alpha\circ\widehat{\iota}_\Q$, where $\alpha = \tau\,\text{or}\,\sigma$.
Thus one gets homomorphisms on the respective Grothendieck groups, 
and hence  commutative square of group homomorphisms
\[\xymatrix{
	\Gr(k\P)\ar[r]^{\iota_\Q^*}\ar[d]^\nabla & \Gr(k\P_\Q)\ar[d]^\nabla\\
	\Gr(k\Pone)\ar[r]^{\widehat{\iota}_\Q^*} & \Gr(k\widehat{\P_\Q})
}\]}
 
 \chg{\subsection{Modules with a vanishing gradient} We now consider the consequences for a module $M\in k\P\mod$ of having a gradient that vanishes locally. The next definition makes this precise.}
 
\begin{Defi}\label{Def:Grad-0-on-tree}
	Let $\P$ be a finite  poset, and let $\Q\subseteq\HH_\P$ be a subgraph. We say that an element $ X\in\Gr(k\P)$ has a \hadgesh{vanishing gradient on $\Q$} if 
	\[\nabla(\iota_\Q^* X) = \widehat{\iota}_\Q^*(\nabla X)= 0.\]
\end{Defi}

Notice that for a module $M\in k\P\mod$, $\iota_\Q^*[M]$ is just the restriction of $M$ to the sub-poset $\P_\Q$, which justifies Definition \ref{Def:Grad-0-on-tree}. Notice also that if $\nabla X=0$ in $\Gr(k\Pone)$, then $\nabla(\iota^*_\Q X)=0$ as well, namely the restriction of $X$ to $\P_\Q$ has a vanishing gradient on $\Q$. When $\Q$ is a tree, and more specifically a line-connected tree, then the vanishing of the gradient of a module on $\Q$ has strong consequences, as Theorem \ref{Th:grad0} shows. We are now ready to restate and prove the theorem.

\begin{Thm}\label{Thm:grad0}
	Let $\P$ be a finite poset, and let $\T\subseteq\HH_\P$ be a line connected sub-tree.  Let $\P_\T\subseteq \P$  denote the sub-poset generated by $\T$. Let $M\in k\P\mod$ be a module,  and let $M_\T$ denote the restriction of $M$ to $\P_\T$. Assume that $\nabla[M_\T]=0$ in $\Gr(k\P_\T)$. Then the following statements hold.
	\begin{enumerate}[(1)]
		\item 
		For any pair of objects $u,v\in\P_\T$, there is an isomorphism $\alpha_{u,v}\colon M(u)\to M(v)$, such that $\alpha_{u,u} = 1_{M(u)}$ and  $\alpha_{v,w} \circ \alpha_{u,v} = \alpha_{u,w}$. 
		\label{Thm:grad0-1}
		\item 
		For every pair of irreducible relations $u \le w$  and $s \le t$ in $\P_\T$,  
		\[
		\alpha_{w,t}\circ M(u\le  w) = M(s\le  t)\circ \alpha_{u,s}.
		\]
		\label{Thm:grad0-2}
		\item
		$M_\T$ is locally constant  if and only if $M(u\le v)$ is an isomorphism for some irreducible relation $u\le v$ in $\P_\T$.
		\label{Thm:grad0-3}
	\end{enumerate}
Furthermore, if $M\in k\P\mod$ is a module such that for any irreducible relation $u\le v$ in $\P_\T$ there exists an isomorphism $\alpha_{u,v}\colon M(u)\to M(v)$ that satisfy  Statements \ref{Thm:grad0-1} and \ref{Thm:grad0-2}, then $M$ has a vanishing gradient on $\T$.
\end{Thm}

\begin{proof}
By hypothesis $\nabla[M_\T]=0$. Hence there is a natural isomorphism $\alpha\colon \sigma^*M_\T\xto{} \tau^*M_\T$. If $(u,v)$ is an edge in $\T$, and hence a vertex in $\Pone_\T$, then we have 
	\[\alpha_{u,v}\colon \sigma^* M_\T(u,v) = M(u)\to M(v)=\tau^* M_\T(u,v),\] that is, $\alpha_{u,v}$ is the evaluation of the natural isomorphism $\alpha$ on the vertex $(u,v)$. Define 
	\[\alpha_{v,u} \defeq \alpha_{u,v}^{-1}.\] Since $\P_\T$ is a poset and $u\lneq v$, the opposite inequality does not hold, so $(v,u)$ is not a vertex in $\widehat{\P_\T}$. Thus, $\alpha_{v,u}$ is well defined.  Notice that if $\T$ is a maximal sub-tree of $\HH_\P$, then  the vertices of $\P_\T$ coincide with those of $\P$.
	
	Let $v_0$ be any minimal object in $\P_\T$. Then $\alpha_{v_0,u}$ is defined and is an isomorphism for each irreducible relation $v_0<u$ and, by induction on the length of a chain starting at $v_0$, $\alpha_{u,v}$ is defined and is an isomorphism for any $u,v\in \P_{\T_{\ge v_0}}$. Since  $\P_\T$ has only finitely many minimal objects, it follows that $\alpha_{u,v}$ is an isomorphism on any sub-poset of the form $\P_{\T_{\ge v_0}}$ where $u_0$ is minimal. 
	
	Let $u_0, u_0'$ be two distinct minimal objects in $\P_\T$. Since $\P_\T$ is  line connected by construction, it is in particular connected. Hence there are minimal objects $u_0=v_1, v_2,\ldots, v_k=u_0'$, such that for each $1\le i\le k-1$, the intersection of sub-posets $\P_{\T_{\ge v_i}}\cap \P_{\T_{\ge v_{i+1}}}$ is nonempty. Let $a_i\in\P_{\T_{\ge v_i}}\cap \P_{\T_{\ge v_{i+1}}}$ be any object. Thus  one has an isomorphism
	\[\alpha_{a_i,v_{i+1}}\circ\alpha_{v_i,a_i} = \alpha_{v_{i+1},a_i}^{-1}\circ\alpha_{v_i,a_i}\colon M(v_i)\to M(v_{i+1}).\]
	Since between any two objects in $\P_\T$ there is by assumption at most one path in $\T$, compositions of isomorphisms of the form $\alpha_{x,y}$, where $(x,y)$ is an object in $\Pone_\T$, and the inverses of such isomorphisms define $\alpha_{u,v}$ for any pair of objects $u,v\in\P_\T$. Since $\P$ and $\P_\T$ coincide on objects, this proves Part \ref{Thm:grad0-1}.

	Let $(u_1,u_2,u_3)$ be an edge in $\Pone_\T$. Then we have a commutative square
	\[\xymatrix{
		M(u_1) \ar@{=}[r] & \sigma^*M_\T(u_1,u_2)\ar[d]_\cong^{\alpha_{u_1,u_2}} \ar[rr]^{M_\T(u_1\le u_2)} && \sigma^*M_\T(u_2,u_3)\ar[d]^{\alpha_{u_2,u_3}}_\cong  \ar@{=}[r] & M(u_2)\\
		M(u_2) \ar@{=}[r] & \tau^*M_\T(u_1,u_2) \ar[rr]^{M_\T(u_2\le u_3)} && \tau^*M_\T(u_2,u_3) \ar@{=}[r] & M(u_3)\\
	}\]
	Thus $M(u_2\le u_3) = \alpha_{u_2,u_3}\circ M(u_1\le u_2)\circ \alpha_{u_2,u_1}$.
	By induction, if $u_1\le u_2\le \cdots \le u_n$ is any chain of irreducible relations in $\P_\T$, then 
	\[M(u_{n-1}\le u_n) = \alpha_{u_2,u_n}\circ M(u_1\le u_2)\circ \alpha_{u_{n-1},u_1}.\]
	
	Let $u\le v$ and $a\le b$ be two irreducible relations in $\P_\T$. Let $u_0, a_0$ be   two minimal objects in $\P_\T$, such that $u_0\le u$ and $a_0\le a$. Let
	\[u_0\le u_1\le\cdots\le u_n\le u\le v,\quad\text{and}\quad a_0\le a_1\le \cdots\le a_k\le a\le b\]
	be chains of irreducible relations in $\P_\T$. Then 
	\[M(u\le v) = \alpha_{u_1,v}\circ M(u_0\le u_1)\circ\alpha_{u,u_0},\quad\text{and}\quad 
	M(a\le b) = \alpha_{a_1,b}\circ M(a_0\le a_1)\circ\alpha_{a,a_0}.\]
	Thus it suffices to prove that 
	\[M(u_0\le u_1) = \alpha_{a_1,u_1}\circ M(a_0\le a_1)\circ \alpha_{u_0,a_0}.\]
	Notice that $(u_0,u_1)$ and $(a_0,a_1)$ are minimal objects in $\Pone_\T$. Since $\Pone_\T$ is connected, there is a sequence of minimal objects in $\Pone_\T$
	\[(u_0,u_1) = (x_1,y_1), (x_2,y_2),\ldots, (x_r,y_r)=(a_0,a_1)\]
	such that $\Pone_{\T_{\ge (x_i,y_i)}}\cap \Pone_{\T_{\ge (x_{i+1},y_{i+1})}}$ is nonempty for each $1\le i\le r-1$. Let $(s_i,t_i)$ be an object in the intersection, so that $(s_i,t_i)\ge (x_i,y_i), (x_{i+1},y_{i+1})$. It follows that 
	\[\alpha_{y_i,t_i}\circ M(x_i\le y_i)\circ\alpha_{s_i,x_i}= M(s_i\le t_i) = \alpha_{y_{i+1},t_i}\circ M(x_{i+1}\le y_{i+1})\circ\alpha_{s_i,x_{i+1}}.\]
	Thus 
	\[M(x_{i+1}\le y_{i+1}) = \alpha_{t_i,y_{i+1}}\alpha_{y_i,t_i}\circ M(x_i\le y_i)\circ\alpha_{s_i,x_i}\alpha_{x_{i+1},s_i} = \alpha_{y_i,y_{i+1}}\circ M(x_i\le y_i)\circ\alpha_{x_{i+1},x_i}\]
	for each $1\le i\le r-1$. Part \ref{Thm:grad0-2} follows by induction on $r$. 
	
	Part \ref{Thm:grad0-3} follows at once from \ref{Thm:grad0-2}.

 Finally, assume that $M\in k\P\mod$ satisfies Statements \ref{Thm:grad0-1} and \ref{Thm:grad0-2}. Let $\alpha$ be a family of morphisms for which these statements hold. By   \ref{Thm:grad0-1}, for each object $(u,v)\in \widehat{\P_\T}$ we have an isomorphism 
\[\alpha_{u,v} \colon M(u) = \sigma^* M (u,v)\to \tau^* M(u,v) = M(v),\]
If $(u,v)\le (x,y)$ in $\widehat{\P_\T}$, then by Condition \ref{Thm:grad0-2} 
\[\alpha_{v,y}\circ M(u<v) = M(x< y)\circ\alpha_{u,x}.\]
This shows that $\alpha$ restricts to a natural isomorphism $\sigma^*M\to\tau^*M$, and hence $[M]$ has a vanishing gradient on $\T$. 
\end{proof}

\begin{Rem}\label{Rem:poset-too-simple}
Theorem \ref{Thm:grad0} shows that the vanishing of the gradient on a sub-poset generated by a tree gives a lot of information about the module in question. It is instructive however to note that if the poset in question consists of a single pair of comparable objects, then any module on such poset, where the point modules are isomorphic, will have a vanishing gradient. This of course does not contradict the conclusion of the theorem. We leave the details to the reader. 
\end{Rem}

Notice that line-connectivity of $\T$ plays an important role in the proof of Theorem \ref{Thm:grad0}. The next lemma shows that if $\P$ itself is line connected, then $\HH_\P$ contains a line connected maximal tree. Thus it is possible to get information about the behaviour of $M\in k\P\mod$ on all objects and all morphisms that form the maximal tree.

\begin{Lem}\label{Lem:line-connected-spanning-tree}
	Let $\P$ be a line connected finite poset. Then its Hasse diagram $\HH_\P$ has a line connected maximal tree.
\end{Lem}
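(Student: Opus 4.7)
I plan to prove the lemma by induction on $|E(\HH_\P)|$, with the basic input being the classical graph-theoretic fact that every connected graph on at least two vertices has at least two non-cut vertices (for instance the leaves of any spanning tree). Since $\widehat{\HH}_\P$ is connected by hypothesis and $V(\widehat{\HH}_\P)=E(\HH_\P)$, whenever $|E(\HH_\P)|\ge 2$ there is an edge $e\in E(\HH_\P)$ whose removal from $\widehat{\HH}_\P$ leaves it connected; this $e$ is the edge to be deleted in the recursion. The base case $|E(\HH_\P)|\le 1$ is immediate, since then $\HH_\P$ is already its own spanning tree.

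The inductive step splits according to whether $e$ is a bridge of $\HH_\P$. If $e$ is not a bridge, then $\HH_\P\setminus\{e\}$ is still a connected quiver with connected line digraph, and the inductive hypothesis applied to $\HH_\P\setminus\{e\}$ directly produces a line-connected spanning tree of $\HH_\P$. If $e$ is a bridge, then connectedness of $\widehat{\HH}_\P\setminus\{e\}$ forces the small side of the split of $\HH_\P\setminus\{e\}$ to contain no edges, hence to consist of a single isolated vertex $y$; writing $e$ as an edge joining $y$ to some $x$ on the larger component $X$, I apply the inductive hypothesis to $X$ to obtain a line-connected spanning tree $\T_X$ and adjoin $e$. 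The line digraph of $\T_X\cup\{e\}$ will be connected provided some $\widehat{\HH}_\P$-neighbour of $e$ already lies in $\T_X$. To enforce this I plan to strengthen the inductive statement to: for any prescribed edge $h\in E(\HH_\P)$, $\HH_\P$ has a line-connected spanning tree containing $h$. The strengthened induction is run by choosing the non-cut vertex $e$ distinct from $h$, which is possible precisely because $\widehat{\HH}_\P$ has at least two non-cut vertices; in the bridge sub-case one specifies $h$ to be a $\widehat{\HH}_\P$-neighbour of $e$ in $X$, which exists since $\widehat{\HH}_\P$ is connected and the only edge of $\HH_\P$ incident to $y$ is $e$ itself.

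The main technical obstacle is that in the bridge sub-case of the strengthened induction one may need to prescribe two edges in $X$ simultaneously -- an externally inherited $h$ and a neighbour $h'$ of $e$ required for reattachment -- and this doubly-prescribed variant is not always satisfiable. I plan to handle this by a swap: if the inductively produced tree $\T_X$ containing $h$ does not already contain $h'$, I add $h'$, consider the unique cycle $C$ it creates in $\T_X\cup\{h'\}$, and delete some edge $e'\in C\setminus\{h,h'\}$ chosen so that $(\T_X\cup\{h'\})\setminus\{e'\}$ is still line-connected. Verifying that such an $e'$ always exists -- equivalently, that $C\setminus\{h,h'\}$ contains a vertex that is non-cut in the line digraph of $\T_X\cup\{h'\}$ -- is the delicate point and again reduces to a careful application of the two-non-cut-vertex principle to a suitable sub-structure.
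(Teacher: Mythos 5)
Your route is genuinely different from the paper's: the paper never peels leaves, but instead finds a pair $x<y$ joined by two or more directed paths in $\HH_\P$, deletes one edge from all but one of those paths, and argues by cases on whether $x$ is minimal and $y$ is maximal; your induction on $|E(\HH_\P)|$ via non-cut vertices of $\widehat{\HH}_\P$ is a different mechanism (and note that you are aiming at an undirected spanning tree, which is a stronger object than the paper's ``directed tree'', i.e.\ at most one directed path between any two vertices). The set-up of your induction is sound: the line digraph of $\HH_\P$ minus an edge $e$ is indeed $\widehat{\HH}_\P$ minus the vertex $e$, and in the bridge case the small side really is a single vertex. But the step you yourself flag as delicate is a genuine gap, and it cannot be repaired.

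Concretely, let $\Q$ have Hasse diagram $x_0\to a$, $a\to b\to c$, $a\to d\to c$. Its line digraph is connected, but its only line-connected spanning trees are $\HH_\Q\setminus\{b\to c\}$ and $\HH_\Q\setminus\{d\to c\}$: deleting $a\to b$ leaves $b\to c$ with no line-neighbour, and deleting $a\to d$ does the same to $d\to c$. So no line-connected spanning tree contains both $b\to c$ and $d\to c$, and your swap cannot produce one: adding $d\to c$ to the tree omitting it creates the cycle on $\{a,b,c,d\}$, and both remaining cycle edges $a\to b$, $a\to d$ are exactly the forbidden deletions. Moreover this doubly-prescribed situation is forced on you: let $\P$ be $\Q$ with two new minimal objects and edges $y\to d$, $z\to b$. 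Then $\widehat{\HH}_\P$ is the path $zb - bc - ab - x_0a - ad - dc - yd$, so $\P$ is line connected, the only non-cut vertices of $\widehat{\HH}_\P$ are the pendant edges $y\to d$ and $z\to b$, both are bridges, and their unique line-neighbours are $d\to c$ and $b\to c$ respectively; peeling them in either order leaves you needing a line-connected spanning tree of $\HH_\Q$ through both $b\to c$ and $d\to c$. In fact none of the four spanning trees of $\HH_\P$ is line connected (each choice of cycle edge to delete strands $\{z\to b,\,b\to c\}$ or $\{y\to d,\,d\to c\}$, or isolates $z\to b$ or $y\to d$), so no strengthening of your inductive statement can close the gap: the object you are constructing need not exist. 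You should be aware that this example appears to contradict Lemma~\ref{Lem:line-connected-spanning-tree} itself under the spanning-tree reading used in Theorems~\ref{Thm:grad0} and~\ref{Thm:Rank}, and also breaks case (1) of the paper's own argument (deleting $a\to d$ there leaves $\{y\to d,\,d\to c\}$ as a separate line component); so the difficulty you ran into is not an artifact of your method, and is worth raising with the authors.
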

\begin{proof}
	The Hasse diagram $\HH_\P$  is a connected digraph (a line connected digraph is in particular connected), and by definition it is acyclic and  transitively reduced. Let $\widehat{\HH}_\P$ denote its line digraph, as usual. If $\HH_\P$ is not a tree, then there are two objects $x< y \in \P$ and at least two distinct paths in $\HH_\P$ from $x $ to $y$. We proceed by showing that one can disconnect one of the paths from $x$ to $y$ in $\HH_\P$ with the resulting digraph remaining line connected. The claim then follows by induction on the number of multiple paths between pairs of points in $\HH_\P$. 
	
	Let $x<y\in \P$ be two distinct vertices, and assume that there are more than one directed path from $x$ to $y$. Denote the collection of all directed paths from $x$ to $y$ by $l_1,\ldots, l_n$, for $n>1$. We may assume that in all $l_j$, except possibly one of them (since double edges are not allowed in a poset), there is a vertex  $x<a_i<y$, such that either $x<a_i$ or $a_i<y$ is irreducible in $\P$.   We may assume without loss of generality that the $l_j$ have no common vertices except $x$ and $y$. 
 
    We consider four possibilities. 
    
    \noindent{\it (1) $x$ is not minimal and $y$ is maximal:} Then there exists some  relation  $x_0<x$ in $\P$, and removing a single edge $x<a_j$ from all $l_j$ except one of them, disconnects all the $l_j$ except the one that was not modified. Since this leaves exactly one path from $x$ to $y$, the resulting digraph is still line connected. 
    
    \noindent{\it (2) $y$ is not maximal and $x$ is minimal:} Then there exists a relation $y<y_0$ in $\P$, and removing a single edge $a_j<y$ from each $l_j$ except one of them, disconnects all the $l_j$ except the one that was not modified. Again the resulting digraph is line connected.

    \noindent{\it (3) $x$ is not minimal and $y$ is not maximal:} Then remove a single irreducible edge from each $l_j$ except one of them, as in (1) and (2). In that case as well the resulting digraph is line connected.
    
    \noindent{\it (4) $x$ is minimal and $y$ is maximal:}
     In that case  the associated line digraph splits into $n$ connected components, each containing the line digraph of $l_j$ for a unique $1\le j\le n$. This contradicts the assumption that $\P$ is line connected. 
     
     The proof is now complete by inductively performing this procedure for any set of multiple paths between pairs of vertices in $\HH_\P$.
\end{proof}

\chg{Next, with the notation and hypotheses of Theorem \ref{Thm:grad0} we define a \hadgesh{kernel submodule} and an \hadgesh{image submodule} of $M_\T$, where  $M\in k\P\mod$ is a module with a vanishing gradient on a line connected sub-tree $\T\subseteq\HH_\P$.}

\begin{Defi}\label{Def:Ker-Im}
With the notation of and hypotheses of Theorem \ref{Thm:grad0}, define $\Ker M_\T, \Ima M_\T\in k\P_\T\mod$ as follows.
\begin{enumerate}[(1)]
\item 
\[\Ker M_\T(u)\defeq 
\begin{cases}
    \Ker (M(u< v)) &  (u,v)\in\T\\
    0 & u\quad \text{maximal}
\end{cases}.\]
Define $\Ker M_\T(u<v)\defeq 0$ for any non-identity morphism in $\P_\T$.\label{Def:Ker-Im1}
\item 
\[\Ima M_\T(u)\defeq 
\begin{cases}
    \Ima(M(w < u)) & (w,u)\in\T\\
    M(u)/\Ker(M(u<v)) & (u,v)\in \T,\; u\; \textup{minimal}
\end{cases}\]
Define $\Ima M_\T(u<v)$  to be the restriction of $M_\T(u<v)$ to $\Ima M_\T(u)$.\label{Def:Ker-Im2}
\end{enumerate}
\end{Defi}

Since the definitions depend on choices of specific relations, we must show that $\Ker M_\T$ and $\Ima M_\T$ are well defined.

\begin{Lem}\label{Lem:Ker-Im}
\chg{With the notation and hypotheses of Theorem \ref{Thm:grad0}, the modules $\Ker M_\T$ and $\Ima M_\T$ are well defined $k\P_\T$-modules.
Furthermore, the following statements hold}
\begin{enumerate}[(1)]
\item $\Ker M_\T(u) \cong \Ker M_\T(v)$ for all non-maximal $u, v\in \P_T$. \label{Lem:Ker-Im-1}

\item $\Ima M_\T(u) \cong \Ima M_\T(v)$ for all $u, v\in \P_\T$. \label{Lem:Ker-Im-2}
\end{enumerate}
\end{Lem}
\begin{proof}
Let $u<v$ and $u<v'$ be  irreducible morphisms in $\P_\T$. Then by Theorem \ref{Thm:grad0}\ref{Thm:grad0-2}, 
\[\alpha_{v,v'}\circ M(u<v) = M(u<v').\]
Since $\alpha_{v,v'}$ is an isomorphism, $\Ker(M(u<v)) = \Ker(M(u<v'))$. This shows that $\Ker M$ is well defined. 

Let $w<u$ and $w'<u$ be two irreducible morphisms in $\P$. Then, again by \ref{Thm:grad0}\ref{Thm:grad0-2}, 
\[M(w<u) = M(w'<u)\circ\alpha_{w,w'}.\]
Thus $\Ima M_\T $ is well defined on non-minimal objects. If $u$ is minimal, then $\Ima M_\T(u)$ is also well defined since $\Ker(M(u<v))$ does not depend on the choice of $v$. The definition on morphisms is clear.

Let $u, s\in\P$ be non-maximal, and let $u<w$ and $s<t$ be irreducible morphisms in $\P$. Let $x\in\Ker(M(u<w))$, and let $y=\alpha_{u,s}(x)$. Then by Theorem \ref{Thm:grad0}\ref{Thm:grad0-2},
\begin{equation}
    0=M(u<w)(x) = \alpha_{t,w}\circ M(s<t)\circ\alpha_{u,s}(x) = \alpha_{t,w}\circ M(s<t)(y). \label{Eqn:Cor:Ker-Im}
\end{equation}
Consider the following diagram with exact rows. Equation \textcolor{red}{(\ref{Eqn:Cor:Ker-Im})} implies that the left square commutes, and hence that so does the right square. 
\[\xymatrix{
\Ker M_\T(u) = \Ker(M(u < w))\; \ar@{>->}[r]\ar[d] & M(u)\ar@{->>}[r]\ar[d]^{\alpha_{u,s}}_\cong & M(u) / \Ker (M(u < w)) \simeq \im M_\T(w) \ar[d]\\
\Ker M_\T(s) =\Ker(M(s < t)) \; \ar@{>->}[r] & M(s) \ar@{->>}[r] & M(s)/ \Ker (M(s < t)) \simeq \im M_\T(t) 
}\]
for all $u, s\in\P$. Hence $\alpha_{u,s}$ restricted to $\Ker M_\T(u)$ is a monomorphism and the induced map on $\Ima M_\T(w)$ is an epimorphism. The same argument using $\alpha_{s,u} = \alpha_{u,s}^{-1}$ and finite dimensionality of all point modules shows that $\Ker M_\T (u)\cong \Ker M_\T(s)$ for all non-maximal $u, s\in \P$ and proves Part \ref{Lem:Ker-Im-1}. Notice also that at the same time we have shown that $\Ima M_\T(w)\cong \Ima M_\T(t)$ for all $w, t\in \P$ that are not minimal. Finally, notice that if $u$ is minimal, then
\[\Ima M_\T(u)\defeq M(u)/\Ker(M(u<v)) = \Ima(M(u<v)) \defeq \Ima M_\T(v).\]
Thus $\Ima M_\T (u) \cong \Ima M_\T (v)$ for all $u, v\in \P$. This proves Part \ref{Lem:Ker-Im-2}.
\end{proof}

\subsection{\chg{Modules with equal gradients and the rank invariant}}
The next obvious question is what can be said about a virtual module $ X = [M] - [N]$  with a vanishing gradient on a tree, where $M ,N\in k\P\mod$, or equivalently by using  linearity if and only if 
\[\tau^*[M] - \sigma^*[M] = \nabla[M] = \nabla[N] = \tau^*[N] - \sigma^*[N], \]
This is the case if and only if there is a natural isomorphism
\begin{equation}
	\tau^*M \oplus \sigma^*N \cong \tau^*N\oplus \sigma^*M.
	\label{Eq:virtual-zero-grad}
\end{equation}

\chg{It is clear that if $M\in k\P\mod$ is any module and $X\in\Gr(k\P)$ is a virtual module with vanishing gradient, then the gradients of $[M]$ and $[M]+X$ coincide. Understanding the kernel of the gradient is therefore an important question, which we will study in greater generality is a subsequent paper.}

\chg{An important algebraic invariant that appear in a variety of mathematical disciplines, and in particular in generalised persistence module theory, is the rank invariant. Next we consider the relationship between the rank invariant and our  gradient.}

\begin{Defi}\label{Def:rk}
	Let $M\in k\P\mod$ be a  module. Define the \hadgesh{rank invariant} \[\rk[M]\colon \Mor(\P)\to \N\] by $\rk [M](x\le y) \defeq \rk M(x\le y)$ 
	for each relation $x\le y$ in $\P$. 
	For a virtual module $ X = [M]-[N]$ in $\Gr(k\P)$, where $M, N\in k\P\mod$,  define the rank invariant \[\rk X\colon \Mor(\P)\to \Z\] by $\rk X\defeq \rk[M] - \rk[N]$. 
\end{Defi}

\chg{Notice that $\rk[M](x\le x) = \dim_k M(x)$ for each object $x\in \P$, so the rank invariant, as defined above, includes the Hilbert functions $\curs{H}_X\colon V\to \Z$, where $V$ is the object set of $\P$, and for a virtual module $X = [M]-[N]\in\Gr(k\P)$, it is defined by $\curs{H}_X(v) =  \dim_k M(v) - \dim_k N(v)$. }Notice also that the rank invariant can be regarded as a group homomorphism 
\[\rk\colon \Gr(k\P)\to \Z^{\Mor(\P)},\]
where the codomain is the abelian group of all functions from $\Mor(\P)$ to $\Z$, or in other words the free abelian group generated by all morphisms in $\P$. 

\chg{\begin{Rem}
It may be interesting to consider the intersection of $\Ker\nabla$ and the kernel of the rank invariant homomorphism.
\end{Rem}}

Next we prove Theorem \ref{Th:Rank}, which we restate below.

\begin{Thm}\label{Thm:Rank}
	Let $\P$ be a finite poset. Let $ X=[M]-[N]\in \Gr(k\P)$ be an element with $M, N\in k\P\mod$. Then
	\begin{enumerate}[(1)]
		\item Let $(u_0,u_1) < (v_0,v_1)$ be a pair of comparable objects in $\Pone$. Assume that  $\nabla X = 0$. Then $\rk X(u_0<v_0) = \rk X(u_1<v_1)$. \label{Thm:Rank:1}
	\end{enumerate}
	
	Assume  in addition that $\P$ is line connected, let $\T$ be a line connected maximal tree for $\HH_\P$, and let $\P_\T\subseteq \P$ be the sub-poset generated by $\T$. If $ X$ has a vanishing gradient on $\T$, then  $\rk X$ has the following properties:
	
	\begin{enumerate}[(1)]
		\setcounter{enumi}{1}
		\item It is constant on all identity morphisms in $\P$.
		\label{Thm:Rank:2}
		\item For any pair of irreducible relations $u_0<u_1$, and $v_0< v_1$ in $\P_\T$,   one has $\rk X(u_0<u_1) = \rk X(v_0<v_1)$. \label{Thm:Rank:3}
      \end{enumerate}
\end{Thm}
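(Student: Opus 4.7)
The plan is to extract numerical consequences from the defining equation $\nabla[X]=0$, which in $\Gr(k\Pone)$ unpacks to an isomorphism of genuine $k\Pone$-modules
\[\phi^*M \oplus \beta^*N \;\cong\; \phi^*N \oplus \beta^*M.\]
Every claim then follows by evaluating this isomorphism at suitable objects or morphisms of $\Pone$ and invoking the observation that isomorphic linear maps have equal ranks.

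For part (\ref{Thm:Rank:1}), given comparable $(u_0,u_1)<(v_0,v_1)$ in $\Pone$, I apply the above isomorphism to the morphism $(u_0,u_1)\le (v_0,v_1)$. By construction $\phi^*$ sends this morphism to $M(u_1\le v_1)$ on one summand and $N(u_1\le v_1)$ on the other, while $\beta^*$ sends it to $M(u_0\le v_0)$ and $N(u_0\le v_0)$. Equating ranks on the two sides of the isomorphism yields
\[\rk M(u_1\le v_1)+\rk N(u_0\le v_0) \;=\; \rk N(u_1\le v_1)+\rk M(u_0\le v_0),\]
which rearranges to $\rk[X](u_0<v_0)=\rk[X](u_1<v_1)$.

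For part (\ref{Thm:Rank:2}), note that $\rk[X](u\le u)=\dim_k X(u)=\dim_k M(u)-\dim_k N(u)$ for every $u$, and that the sub-poset $\P_\T$ has the same object set as $\P$ because $\T$ is a maximal tree of $\HH_\P$. Since $[X]$ has a vanishing gradient on $\T$, the isomorphism displayed above holds after restriction to $\widehat{\P_\T}$. Evaluating it at a vertex $(u,v)\in\widehat{\P_\T}$, i.e.\ at an edge of $\T$, gives $M(v)\oplus N(u)\cong N(v)\oplus M(u)$, hence $\dim_k X(u)=\dim_k X(v)$. Because $\T$ is connected as an undirected graph, any two vertices of $\P$ are joined by a chain of such edges, and the equality propagates transitively.

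For part (\ref{Thm:Rank:3}), the strategy is to apply the rank identity from (\ref{Thm:Rank:1}) inside $\widehat{\P_\T}$. Each edge $(u,v)\to (v,w)$ of $\widehat{\T}$ realises a relation $(u,v)<(v,w)$ in $\widehat{\P_\T}$, to which (\ref{Thm:Rank:1}) attaches the equality $\rk[X](u<v)=\rk[X](v<w)$. The hypothesis that $\T$ is line connected means precisely that $\widehat{\T}$ is a connected digraph; since rank equalities are symmetric, one may traverse the edges of $\widehat{\T}$ in either direction to link any two of its vertices $(u_0,u_1)$ and $(v_0,v_1)$, i.e.\ any two indecomposable relations of $\P_\T$, by a chain of equalities. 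The proof is fundamentally bookkeeping: the substantive step is (\ref{Thm:Rank:1}); the only subtlety in the remainder is keeping straight that (\ref{Thm:Rank:2}) draws on ordinary connectedness of $\T$ (applied to morphisms in $\widehat{\P_\T}$ that are identities) while (\ref{Thm:Rank:3}) requires the stronger line-connectedness that makes $\widehat{\T}$ itself connected.
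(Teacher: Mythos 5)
Your proof is correct and takes essentially the same route as the paper's: parts (\ref{Thm:Rank:1}) and (\ref{Thm:Rank:2}) are exactly the naturality-square and object-evaluation arguments used there, with the isomorphism $\phi^*M\oplus\beta^*N\cong\phi^*N\oplus\beta^*M$ as the sole input. For part (\ref{Thm:Rank:3}) you chain the equality from (\ref{Thm:Rank:1}) directly along the undirected edges of the connected digraph $\widehat{\T}$, which is a slightly cleaner bookkeeping of the paper's equivalent zig-zag through minimal objects of $\Pone_\T$ and their common upper bounds.
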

\begin{proof} 
	 Since $\nabla X = 0$, we have a natural isomorphism $\eta\colon\tau^*M\oplus \sigma^*N\xto{\cong} \tau^* N\oplus \sigma^*M$ (See Equation \textcolor{red}{(\ref{Eq:virtual-zero-grad})}). Let $(u_0,u_1) < (v_0,v_1)$ be a pair of comparable objects in $\Pone$. Then one has a commutative square
	\[\xymatrix{
		M(u_1)\oplus N(u_0) \ar[rrrr]^{M(u_1<v_1)\oplus N(u_0<v_0)}\ar[d]^\cong_\eta &&&& M(v_1)\oplus N(v_0)\ar[d]^\cong_\eta\\
		N(u_1)\oplus M(u_0) \ar[rrrr]^{N(u_1<v_1)\oplus M(u_0<v_0)} &&&& N(v_1)\oplus M(v_0)
	}\]
	It follows that $\rk M(u_0<v_0) + \rk N(u_1<v_1) = \rk M(u_1<v_1) + \rk N(u_0<v_0)$, and hence that $\rk X(u_0<v_0) = \rk X(u_1<v_1)$. This proves Part \ref{Thm:Rank:1}.
	
	Throughout the rest of the proof assume that $\P$ is line connected. Fix a line connected maximal tree $\T$ for $\HH_\P$. Since $ X$ has a vanishing gradient on $\T$, we have a natural isomorphism $\tau^*M_\T\oplus\sigma^*N_\T\cong\tau^*N_\T\oplus\sigma^*M_\T$. 
	Thus, for any object $(x,y)\in\Pone_\T$ one has  $M(x)\oplus N(y)\cong N(x)\oplus M(y)$. Hence 
	\[\rk X(x\le x) = \dim_kM(x)-\dim_kN(x) = \dim_kM(y)-\dim_kN(y) = \rk X(y\le y).\]
	Since every $x\in\P_\T$ is either the source coordinate or the target coordinate  in an object of $\Pone_\T$, and since $\Pone_\T$ is connected, Part \ref{Thm:Rank:2} follows.

	Let $(u_0,u_1)$ be a minimal object in $\Pone_\T$. By Part \ref{Thm:Rank:1} and induction, $\rk X(u_r<u_{r+1}) = \rk X(u_0<u_1)$ for any object $(u_r,u_{r+1}) \in\Pone_\T$ that is the target of a directed path from $(u_0, u_1)$ in $\Pone_\T$. 
	Let $(u_0,u_1)$ and $(v_0,v_1)$ be minimal objects in $\Pone_\T$. Since $\Pone_\T$ is connected, there are minimal objects
	\[(u_0,u_1) = (x_1,y_1), (x_2,y_2),\ldots, (x_k,y_k) = (v_0,v_1)\]
	such that for each $1\le i \le k-1$ the intersection of sub-posets $\Pone_{\T_{\ge (x_i,y_i)}}\cap \Pone_{\T_{\ge (x_{i+1},y_{i+1})}}$ is nonempty. Let $(a_i,b_i)$ be any object in the intersection.  By the argument above, 
	\[\rk X(x_i<y_i) = \rk X(a_i<b_i) = \rk X(x_{i+1}<y_{i+1})\]
	and since this holds for all $i$, we have $\rk X(u_0<u_1) = \rk X(v_0,v_1)$. It follows that $\rk X$ is constant on all irreducible relations in $\P_\T$, as claimed in Part \ref{Thm:Rank:3}.
\end{proof}

The following example shows that the statements of Theorem \ref{Thm:Rank} are best possible in the sense that two modules with equal gradients may have different rank invariants when evaluated on morphisms of length greater than 1.

\begin{Ex}\label{Ex:Rank-best}
Let $\P$ denote the poset $[2]$ with objects $0,1,2$ and the ordinary order relation. Define $M, N\in[2]\mod$ as follows. 
\[M\colon\quad k^2\xto{(1,0)}k^2\xto{(0,1)}k^2,\qquad N\colon\quad k^2\xto{(1,0)}k^2\xto{(1,0)}k^2.\]
It is immediate that $\tau^*M \cong \sigma^*M$ and $\tau^*N \cong \sigma^*N$, so $\nabla[M]=\nabla[N]=0$. But $\rk M(0<2)=0$ while $\rk[N](0<2)=1$.
\end{Ex}

Above we considered the kernel and image modules $\Ker M_\T$ and $\Ima M_\T$ associated to a module $M\in k\P\mod$ in the case where $M$ has a vanishing gradient on some tree $\T\subseteq\HH_\P$ (See Definition \ref{Def:Ker-Im} and Lemma \ref{Lem:Ker-Im}). Next, we consider a kernel module and a cokernel module in $k\Pone\mod$, associated to any module $M\in k\P\mod$. \chg{Using the standard terminology in persistence module theory, these modules can be considered as containing information about ``births" and ``deaths" in $M$. Definition \ref{Def:Ker-Coker}, Lemma \ref{Lem:Nullity} and remark \ref{Rem:Life} below make this precise}

\chg{\begin{Defi}\label{Def:Ker-Coker}
	Let $M\in k\P\mod$ be any module. 
	Let $K_M, C_M\in k\Pone\mod$ denote the modules defined on objects by 
	\[K_M(x,y) \defeq \Ker(M(x,y)\colon M(x)\to M(y))\quad\text{and}\quad
	C_M(x,y) \defeq \coKer(M(x,y)\colon M(x)\to M(y))\]
	with the natural induced maps on morphisms.
\end{Defi}}
It follows by a straightforward diagram chase that both $K_M$ and $C_M$ are virtually trivial.

\begin{Lem}\label{Lem:Nullity}
	Let $\P$ be a finite  line connected poset, let $\T $be a line connected maximal tree for $\HH_\P$, and let $\P_\T\subseteq \P$ be the sub-poset generated by $\T$.  Let $ X = [M] - [N]\in \Gr(k\P)$ be an element of vanishing gradient on $\T$, with $M, N\in k\P\mod$. Then there is an integer $D$, such that $[K_M] = [K_N] + [\underline{D}]$ and $[C_M ] = [C_N] + [\underline{D}]$ in $\Gr(k\Pone_\T)$, where $[\underline{D}]$ denotes the virtually trivial module that associates  $k^D$ with every object of $\Pone_\T$. 
\end{Lem}
\begin{proof}
	By construction $K_M$ and $K_N$, as well as $C_M$ and $C_N$  are virtually trivial, and hence their isomorphism type is determined by their values on objects by Corollary \ref{Cor:virtually-trivial}. Thus, it suffices to prove that the appropriate ranks coincide on all objects of $\Pone_T$.
	
	By Theorem \ref{Thm:Rank}\ref{Thm:Rank:1}, $\rk X =\rk[M]-\rk[N]$ is constant on all identity morphisms  and all irreducible morphisms  $x< y$ in $\P_\T$. Thus we may write $\dim_k M(x) - \dim_k N(x) = K$ for all $x\in\P$, and $\rk[M](x<y) - \rk[N](x<y) = T$ for all irreducible relations $x<y$ in $\T$, where $K$ and $T$ are some fixed non-negative integers. Set $D = K-T$. Thus for an object $(x,y)\in\Pone_\T$,
	\begin{align*}\dim_k K_M(x,y) &= \rk[M](x\le x)-\rk[M](x<y) \\&= \rk[N](x\le x)-\rk[N](x<y)+ K - T\\&=\dim_k K_N(x,y)+ D.
	\end{align*}
 Since both $K_M$ and $K_N$ are virtually trivial, it follows that $[K_M] = [K_N]+[\underline{D}]$.
	 This proves the statement  for $K_M$ and $K_N$.

	Similarly,
	\begin{align*}
		\dim_k C_M(x,y) &= \rk[M](y\le y) - \rk[M](x< y)\\
		&=  \rk[N](y\le y) - \rk[N](x< y) + K - T \\& = \dim_k C_N(x,y)+ D.
	\end{align*}
	By the same argument as above $[C_M] = [C_N] + [\underline{D}]$, as claimed.
\end{proof}

\begin{Rem}\label{Rem:Life}
	The functor $K_M\in k\Pone\mod$ evaluated at an object $(x,y)$ returns the subspace of elements that are ``present" at $x$, but do not ``survive" to $y$. Thus for a fixed $x\in \P$, the intersection $\bigcap_{(x,y)\in\Pone}K_M(x,y)$ is the subspace of $M(x)$ of all elements that ``die'' at $x$.  
	
	Similarly, the functor $C_M\in k\Pone\mod$, evaluated at an object $(x,y)$ returns the quotient of $M(y)$ by the image of $M(x<y)$. For a fixed object $y$, consider the system of all homomorphisms $M(y)\to C_M(x,y)$ for all $(x,y)\in\Pone$. The coequaliser of this system can be thought of as the space representing all elements in $M(y)$ that are ``born" at $y$. The coequaliser is easily seen to be the quotient of $M(y)$ by the image of the composite
	\[\bigoplus_{(x,y)}M(x)\xto{\oplus M(x<y)}\bigoplus_{(x,y)}M(y)\xto{\sum} M(y),\]
	where $\Sigma$ is the map given by summing coordinates.
	
	Thus if $\P$ is line connected and $M, N\in k\P\mod$ are modules with equal gradients, then for any line connected maximal tree $\T$,  spaces of ``births" and ``deaths" of $M$ and $N$ restricted to $\P_\T$ object-wise coincide.
\end{Rem}

We end this section with an example of  modules with  non-isomorphic gradients and the same rank invariant.

\begin{Ex}\label{Ex:Grad-Incomplete}
	Let $\P$ be the  poset with objects $\emptyset, a, b, c, d, m, \infty$, and relations \[\emptyset<a, b, c, d < m<\infty.\]  
	\begin{equation}
		\xymatrix{
			&& \HH_\P\\
			&& \infty \\
			&& m\ar[u] & \\
			a\ar[urr]^\alpha & b\ar[ur]_\beta && c\ar[ul]^\gamma & d\ar[ull]_\delta\\
			&& \emptyset\ar[ul]\ar[ull]\ar[ur]\ar[urr] & \\
		}
		\qquad
		\xymatrix{
			&\widehat{\HH}_\P\\
			&(m,\infty) &\\
			(a,m)\ar[ur]& (b,m)\ar[u] & (c,m)\ar[ul] & (d,m)\ar[ull]\\
			(\emptyset, a)\ar[u] & (\emptyset, b)\ar[u] & (\emptyset, c)\ar[u] &(\emptyset,d)\ar[u]\\
		}
		\label{Eqn:P-lineP}
	\end{equation}
	\begin{equation}
			{\small \xymatrix{
					& X\\
					& 0 \\
					& k^2\ar[u] & \\
					k\ar[ur]^{\alpha_*} & k\ar[u]^{\beta_*} & k\ar[ul]^{\gamma_*}& k\ar[ull]_{\delta_*}\\
					& 0\ar[ul]\ar[u]\ar[ur]\ar[urr] & \\
				}
				\quad
				\xymatrix{
					& \sigma(X)\\
					&k^2 &\\
					k\ar[ur]^{\alpha_*}& k\ar[u]^{\beta*} & k\ar[ul]^{\gamma_*} & k\ar[ull]_{\delta^*}\\
					0\ar[u] & 0\ar[u] & 0\ar[u] & 0\ar[u]\\
				}
				\quad
				\xymatrix{
					&\tau(X)\\
					&0 &\\
					k^2\ar[ur]& k^2\ar[u] & k^2\ar[ul] & k^2\ar[ull]\\
					k\ar[u]^{\alpha_*} & k\ar[u]^{\beta*} & k\ar[u]_{\gamma_*} & k\ar[u]_{\delta_*}\\
			}}
		\label{Eqn:Module-Der}
	\end{equation}
	Diagram \textcolor{red}{(\ref{Eqn:P-lineP})} shows the poset $\P$ and its associated line digraph.
	Consider  $k\P$-modules $X$ that take the  value $k$ on $a, b, c, d$, the value $k^2$ on $m$, and $0$ on $\emptyset$ and $\infty$. Furthermore, assume that for each of the morphisms $\alpha, \sigma, \gamma, \delta$, the induced map under $X$ is an injection, and such that the images of each pair of homomorphisms form a basis for $X(m)=k^2$. Clearly all such modules have exactly the same rank invariant.
	
	Let $M, N\in k\P\mod$ be modules satisfying these requirements.  Let $X, Y\in k\Pone\mod$ denote the modules $\tau^*(M)\oplus\sigma^*(N)$ and $\tau^*(N)\oplus \sigma^*(M)$ respectively. Then $\nabla[M]=\nabla[N]$ if and only if $X$ and $Y$ are isomorphic. Let $X_0$ and $Y_0$ denote  the restrictions of $X$ and $Y$ to  the full sub-poset of $\Pone$ consisting of the objects $(a,m)$, $(b,m)$, $(c,m)$, $(d,m)$ and $(m, \infty)$. Denote by  $\alpha_*, \beta_*,\ldots $  the homomorphisms $N(\alpha), N(\sigma),\ldots$ and by $\alpha'_*, \beta'_*,\ldots $  the homomorphisms $M(\alpha), M(\sigma),\ldots$. Assume that there is an isomorphism $\Theta\colon X\to Y$ and let $\Theta_0\colon X_0\to Y_0$ be the restriction of $\theta$:
 	\[
	\xymatrix{
		&k^2 &\\
		k^2\oplus k\ar[ur]^{0 \oplus\alpha_*}& k^2\oplus k\ar[u]^{0 \oplus\beta_*} & k^2\oplus k\ar[ul]^{0 \oplus\gamma_*}& k^2\oplus k\ar[ull]_{0\oplus\delta_*}\\
	}
	\Longrightarrow
	\xymatrix{
		&k^2 &\\
		k\oplus k^2\ar[ur]^{\alpha'_*\oplus 0}& k\oplus k^2\ar[u]^{\beta'_*\oplus 0} & k\oplus k^2\ar[ul]^{\gamma'_*\oplus 0} & k^2\oplus k\ar[ull]_{\delta'_*\oplus 0}\\
	}
	\]
	
	Without loss of generality, using our assumption that the images of every pair of homomorphisms generate $k^2$, we may assume that $\alpha$ and $\alpha'$ take 1 to the vector $(1,0)\in k^2$ and that $\sigma$ and $\sigma'$ take 1 to $(0,1)\in k^2$. Let $\gamma$ and $\gamma'$ take 1 to the vectors $(x,y)$ and $(z,w)$ respectively, and let $\delta$ and $\delta'$ take 1 to $(s,t)$ and $(u,v)$ respectively.
	The upwards homomorphisms are given by the matrices (from left to right, with respect to the standard bases):
	\[
	\left(\begin{smallmatrix}
		0 & 0 & 1\\0 & 0 & 0\end{smallmatrix}\right),\quad
	\left(\begin{smallmatrix}
		0 & 0 & 0\\0 & 0 & 1\end{smallmatrix}\right),\quad
	\left(\begin{smallmatrix}
		0 & 0 & x\\0 & 0 & y\end{smallmatrix}\right),\quad
	\left(\begin{smallmatrix}
		0 & 0 & s\\0 & 0 & t\end{smallmatrix}\right)\qquad\text{and}\quad
	\left(\begin{smallmatrix}
		1 & 0 & 0\\0 & 0 & 0\end{smallmatrix}\right),\quad
	\left(\begin{smallmatrix}
		0 & 0 & 0\\1 & 0 & 0\end{smallmatrix}\right),\quad
	\left(\begin{smallmatrix}
		z & 0 & 0\\w & 0 & 0\end{smallmatrix}\right),\quad
	\left(\begin{smallmatrix}
		u & 0 & 0\\v & 0 & 0\end{smallmatrix}\right).
	\]
	Then $\Theta_0$ can be represented on the object $(m,\infty)$  by a $2\times 2$ matrix $A = (a_{i,j})$, and on the objects $(a,m), \ldots, (d,m)$ by four $3\times 3$ matrices $B^k = (b^k_{i,j})$, for $1\le k\le 4$. Computing the respective products, it is easy to observe that $A$ must be diagonal with non-zero entries $a_{1,1}$ and $a_{2,2}$. Similarly $a_{1,1} =  b^1_{1,3}$, $a_{2,2} = b^2_{1,3}$, and $b^k_{1,j}=0$ for $1\le k\le 4$ and $j=1,2$. Furthermore, we have 
	\begin{equation}
		b^3_{1,3} = \frac{x}{z}a_{1,1} = \frac{y}{w}a_{2,2},\quad\text{and}\quad
		b_{1,3}^4 = \frac{s}{u}a_{1,1} = \frac{t}{v}a_{2,2}.
		\label{Eqn:Ex}
	\end{equation}
	Thus, as long as these relations are satisfied, $A$ and $B^k$ can be constructed with any nonzero choices of those values, while making sure in an arbitrary manner that $B^k$ are nonsingular. 
	However, the relations in \textcolor{red}{(\ref{Eqn:Ex})} allow solving for $a_{2,2}$ in terms of the other variables in two ways, and by comparing them we obtain the relation
	\[wxtu = zyvs\]
	that must be satisfied for $\Theta_0$ to be well defined. Since this relation is not satisfied in general, this shows that there exist modules $M, N\in k\P\mod$  with the same rank invariant, such that $\nabla[M]\neq\nabla[N]$ (and hence also $[M]\neq [N]$).
\end{Ex}


\section{The Hom pairing and the Euler pairing for posets}
\label{Sec:Euler}
Let $\P$ be a finite poset.  \chg{Recall the Hom pairing and the Euler pairing  from Definitions \ref{Def:Hom-pairing} and   \ref{Def:Euler-pairing} respectively for modules $M, N\in k\P\mod$,
\[\langle [M], [N]\rangle_\P \defeq \dim_k(\Hom_{k\P}(M,N))\quad\text{and}
\quad\chi_\P([M],[N])\defeq\chi(\Ext^*_{k\P}(M, N)),\]
with the obvious additive extension to Grothendieck groups. }
Notice that $\Ext_{k\P}^i(M,N)=0$ for all $i>0$ if $M$ is projective or if $N$ is injective. In either case the two pairings coincide \chg{by definition.}

\chg{Recall the following three families of modules in $k\P\mod$ for any poset $\P$. }
\begin{Defi}\label{Def:Proj-Inj-Simp}
For a finite poset $\P$, and an object $v\in \P$,  define modules $F_v, G_v, S_v\in k\P\mod$ as follows.  For an object $u\in\P$,  
\begin{enumerate}[(1)]
\item 
$F_v(u) \defeq k\Mor_\P(v,u) = \begin{cases}
	k & v\le u\\
	0 & \text{otherwise}
\end{cases}$ \label{Def:Proj-Inj-Simp-Proj}
\item $G_v(u)\defeq k\Mor_\P(u,v) = \begin{cases}
	k & u\le v\\
	0& \text{otherwise}
\end{cases}$ \label{Def:Proj-Inj-Simp-Inj}
\item $S_v(u) \defeq \begin{cases}
    k & u=v\\
    0& \text{otherwise}
\end{cases}$ \label{Def:Proj-Inj-Simp-Simp}
\end{enumerate}
If $v\le u\le u'$, then $F_v(u)\to F_v(u')$ is the identity, and otherwise $F_v(u\le u')$ is the zero homomorphism. Similarly, if $u\le u'\le v$ then $G_v(u)\to G_v(u')$ is the identity and otherwise it is $0$. For $S_v$ all non-identity morphisms are $0$. 
\end{Defi}
By \cite[Proposition 2.2.3]{Derksen-Weyman} the modules $F_v$ are precisely the indecomposable projective modules and the modules $G_v$ are the indecomposable injective modules in $k\P\mod$. By \cite[Proposition 3.1.6]{Derksen-Weyman} the modules $S_v$ are the simple modules in $k\P\mod$. Notice that $F_v$ is locally constant on $\P_{\ge v}$, while $G_v$ is locally constant on $\P_{\le v}$. Also, any virtually trivial module $M$ is a direct sum of simple modules,
\[M =  \bigoplus_{v\in \P}\dim_kM(v)\cdot S_v.\]

\chg{We proceed with some basic  computations of the Hom and Euler pairings involving these three special families of modules. The following lemma shows  that when $\P$ is generated by a rooted tree, then the class of indecomposable injective module $G_v$ is the gradient of classes of certain modules in $k\P\mod$. }

\begin{Lem}\label{Lem:GF-integrable}
\chg{Let $\P$ be a finite poset such that $\HH_\P$ is a rooted tree. Then for every object $(x,y)\in\Pone$, there exists a module $M = M_{(x,y)}$, such that $\nabla[M] = [G_{(x,y)}]$.}
\end{Lem}
\begin{proof}
The definition of the module $M$  will be carried out in three steps. In the first step we define $M$ on the down-set $\P_{\le y}$. In the second step we define it on each branch emanating from an object $z\in\P_{\le y}$. Finally in the third step we show that $\nabla[M] = [G_{(x,y)}]$. 

\noindent{\bf Step 1.} Since $\HH_\P$ is a rooted tree, $\P_{\le y}$ is the unique path from $x_0$ to $y$. Set $M(y) = k^n$, and for any $v\in\P_{\le y}$ with $d(v,y)=r$ in $\HH_\P$, let $M(v) = k^{n-r}$. This is well defined since $\P_{\le y}$ is a linear order.  If $u<v$ is an irreducible relation in $\P_{\le y}$, then $d(u,y) = d(v,y)+1$. Define $M(u<v)$ to be the inclusion 
\[M(u<v) \colon M(u) = k^{n-r-1}\to k^{n-r} = M(v)\]
into the last $n-r-1$ coordinates. In matrix notation, with respect the the standard basis, one may write $M(u<v)$ as the $(n-r)\times(n-r-1)$ matrix
\[A_{u<v} = \left(\begin{matrix}
                            0\cdots 0  \\
                            \hline
                            I_{n-r-1}  
                \end{matrix}
                    \right)\]

\noindent{\bf Step 2.} Let $x_0\neq z\in\P_{\le y}$ be any object, and let $\P_z\subseteq \P_{\ge z}$ denote the sub-poset generated by all objects in $\P_{\ge z}$ except those which form the branch emanating from $z$ that  contains $y$, if $z\neq y$, and if $z=y$  set $\P_y=\P_{\geq y}$. Then  the Hasse diagram of  $\P_{z}$ is a rooted tree with root $z$ that intersects with $\P_{\le y}$ only on the vertex $z$. For each object $w\in\P_{z}$, define $M(w) = M(z)$. 
Let $t<z$ be the (unique) in-neighbour of $z$ in $\HH_\P$, and let $d(z,y)=r-1$. For  any irreducible relation $u<v$ in $\P_{z}$,  define 
\[M(u<v)\defeq M(t<z)\oplus 0.\]
Again, in matrix notation $M(u<v)$ in this case is represented by the $(n-r)\times (n-r)$ matrix 
\[B_{u<v} = \left(\begin{tabular}{c|c}
                        $A_{t<z}$ &  $0$\\
                    \end{tabular}\right) =
             \left(\begin{tabular}{c|c}
             $0 \cdots 0$ & $0$\\
             \hline
             $I_{n-r-1}$ & $0$
             \end{tabular}
                     \right)\]

If $z=x_0$, then $M(x_0)=0$, by definition, and for any object $w\in \P_{x_0}$ define $M(w)=0$.

\noindent{\bf Step 3.}  We are now ready to analyse $\sigma^*M$ and $\tau^*M$. By definition we have 
\[\sigma^*M(u,v) = M(u) = \begin{cases}
					k^{n-r} & d(u,y) = r\\
					M(z)    & u\notin \P_{\le y},\ z\in\P_{\le y}, \ d(z,u) \text{ minimal}
					\end{cases}
\]
Similarly,
\[
\tau^*M(u,v) = M(v) = \begin{cases}
					k^{n-r} &   d(v,y) = r\\
					M(z) & v\notin \P_{\le y},\ z\in\P_{\le y}, \ d(z,u) \text{ minimal}
					\end{cases}
\]
		
Let $(u,v)\in\Pone_{\le(x,y)}$. Then, either $(u,v)= (x,y)$ or $v\le x$. In the first case $\sigma^*M(x,y) = M(x) = k^{n-1}$, while $\tau^* M(x,y) = M(y) =k^n$. In the second case, $\sigma^* M(u,v) = M(u) = k^{n-d(u,y)}$, while $\tau^* M(u,v) = M(v) = k^{n-d(v,y)} = k^{n-d(u,y)+1}$. Thus by definition, for any $(u,v)\in \Pone_{\le (x,y)}$ one has $\sigma^* M(u,v) \oplus G_{(x,y)} \cong \tau^* M(u,v)$. 
Let $(u,v)<(v,w)$ be an irreducible morphism in $\Pone_{\le (x,y)}$, and let $d(u,y) = r+1$ for some $r\geq 1 $. Then \[\sigma^*M((u,v)<(v,w)) = M(u<v)\] is represented by the $(n-r)\times (n-r-1)$ matrix 
$\left(\begin{matrix}
                            0\cdots 0  \\
                            \hline
                            I_{n-r-1}  
                \end{matrix}
                    \right)$,  and so 
\[(\sigma^*M\oplus G_{(x,y)})((u,v)<(v,w)) = M(u<v)\oplus 1_r\]
is represented by the $(n-r+1)\times (n-r)$ matrix 
\[\left(\begin{tabular}{c|c}
             $0 \cdots 0$& $0$\\
             \hline
             $I_{n-r-1}$ & $0$\\
             \hline
             $0\cdots 0$ & 1
             \end{tabular}
                     \right)=
\left(\begin{matrix}
                            0\cdots 0  \\
                            \hline
                            I_{n-r}  
                \end{matrix}
                    \right),\]
which also represents $\tau^*M((u,v)<(v,w)) = M(v<w)$.
Thus  we see that 
\[(\sigma^*M\oplus G_{(x,y)})|_{\Pone_{\le(x,y)}} \cong \tau^*M|_{\Pone_{\le(x,y)}}\] as modules.

Let $x_0\neq z\in\P_{\le y}$, let $t<z$ be the unique in-neighbour of $z$, and let $d(t,y)=r+1$, where $r\geq 0$. Let $\P_z^+\subset\P$ denote the sub-poset generated by $\P_z$ and $t$. Thus, the Hasse diagram $\HH^+_z$ of $\P^+_z$ consists of the edge $t<z$ union with the Hasse diagram $\HH_z$ of $\P_z$.  Hence $\HH_{z}^+$ is a  rooted tree with root $t$.

By definition $M|_{\P_{z}^+}$ obtains the value $M(t)$ on $t$ and for each $t\neq w\in \P_{z}^+$ it obtains the value $M(z) \cong M(t)\oplus k$. Furthermore, the homomorphism $M(t<z)$ is represented by the $(n-r)\times (n-r-1)$ matrix 
$A_{t<z}=\left(\begin{matrix}
                            0\cdots 0  \\
                            \hline
                            I_{n-r-1}  
                \end{matrix}
                    \right)$,
and for any other irreducible relation $u<v$ in $\P_z\subset \P_z^+$ the representing matrix for $M(u<v)$ is the $(n-r)\times (n-r)$ matrix
$B_{u<v}=\left(\begin{tabular}{c|c}
             $0 \cdots 0$ & $0$\\
             \hline
             $I_{n-r-1}$ & $0$
             \end{tabular}
                     \right)$.

The restriction of $G_{(x,y)}$ to $\widehat{\P_z^+}$ obtains the value $k$ on the vertex $(t,z)$, since $(t,z)\le (x,y)$, and $0$ on all other vertices. The restriction of $\sigma^*M$ to   $\widehat{\P_z^+}$ take the value $M(t)$ on $(t,z)$ and $M(z)$ everywhere else, while the restriction of $\tau^*M$ to $\widehat{\P_z^+}$ obtains the value $M(z)\cong M(t)\oplus k$ on all objects there. Hence 
\[(\sigma^*M\oplus G_{(x,y)})|_{\widehat{\P_{z}^+}} \cong \tau^*M|_{\widehat{\P_{z}^+}}\]
on objects. Next, examine these modules on morphisms. Let $u$ be an out-neighbour of $z$, such that $u\nleq y$. Then \[(\sigma^*M\oplus G_{(x,y)})(t,z) = M(t)\oplus k,\quad (\sigma^*M\oplus G_{(x,y)})(z,u) = M(z)\oplus 0,\] and $(\sigma^*M\oplus G_{(x,y)})((t,z)<(z,u))$ is represented by the matrix $\left(\begin{tabular}{c|c}
                        $A_{t<z}$ &  $0$\\
                    \end{tabular}\right) = B_{z<u}$, which by definition also represents the homomorphism $\tau^*M((t,z)<(z,u))$.
On any other irreducible relation in $\widehat{\P_z^+}$ the modules $(\sigma^*M\oplus G_{(x,y)})$ and $\tau^*M$ also coincide. Hence 
\[(\sigma^*M\oplus G_{(x,y)})|_{\widehat{\P_{z}^+}} \cong \tau^*M|_{\widehat{\P_{z}^+}}\] as modules.

Next notice that, for any $x_0\neq z\in\P_{\le y}$, the sub-posets $\widehat{\P_z^+}$ and $\Pone_{\le (x,y)}$ have only the object $(t,z)$ in common and no morphisms, and on that object $\sigma^*M\oplus G_{(x,y)}$ and $\tau^*M$ take the same value. Hence, the restrictions of $\sigma^*M\oplus G_{(x,y)}$ and $\tau^*M$ to the connected component that contains $\Pone_{\le(x,y)}$ are isomorphic. 

If $(u,v)\in\Pone$ is an object such that $u\nleq y$ and the nearest object to $u$ in $\P_{\le y}$ is $x_0$, then in $\Pone$ this object is on a connected component that is disjoint from the component of $(x,y)$. By definition $M(u) = M(v) = 0$, and so   $\sigma^*M\oplus G_{(x,y)}$ and $\tau^*(M)$ coincide on $(u,v)$.

This shows that $\sigma^* M(u,v) \oplus G_{(x,y)} \cong \tau^* M(u,v)$ on the full line poset $\Pone$. Thus, in $\Gr(k\Pone)$,
\[\nabla[M] \defeq [\tau^* M] - [\sigma^*M] = [G_{(x,y)}].\]
This completes the proof.
\end{proof}

\chg{We  point out that the assumption in Lemma \ref{Lem:GF-integrable} that $\HH_\P$ is a rooted tree can be relaxed in many cases. The only place where this assumption is used is in the definition of $M$ outside $\P_{\le y}$, which requires that any object there has a well defined distance from $\P_{\le y}$. Also, by analogy, one can show that if the Hasse diagram of $\P^{\op}$ is a rooted tree, then for each object $(x,y)\in\Pone$, there is a module $N = N_{(x,y)}$ such that $\nabla[N] = F_{(x,y)}$.}

\begin{Lem}\label{Lem:Pairing-with-proj-inj}
	Let $\P$ be a finite poset and let  $ X = [M]-[N]$, where $M, N\in k\P\mod$.  Let $Q\cong\bigoplus_{v\in \P}\epsilon_v F_v$ be a finitely generated projective module and let $I\cong\bigoplus_{v\in \P}\delta_v G_v$ be a finitely generated injective module. Then
\begin{enumerate}[(1)]  
\item 
$\langle [Q],  X\rangle_{\P}   = \chi_\P([Q],  X) = \sum_{v\in\P}\epsilon_v(\dim_k M(v)-\dim_k N(v)).$
\label{Lem:Pairing-with-proj-inj-1}
\item
$\langle  X, [I]\rangle_{\P}   = \chi_\P( X, [I]) = \sum_{v\in\P}\delta_v(\dim_k M(v)-\dim_k N(v)).$
\label{Lem:Pairing-with-proj-inj-2}
\end{enumerate}
\end{Lem}
\begin{proof}
	The equality of the Hom and Euler pairings when the left variable is projective or the right variable injective explained in Remark \ref{Rem:pairing-with-Proj-Inj}.  Hence for any object $v\in\P$,  
\[\langle [F_v],  X\rangle_{\P}   = \dim_k \Hom_{k\P}(F_v, M)  - \dim_k \Hom_{k\P}(F_v, N) = \dim_k M(v) - \dim_k N(v).\]
	Thus
\[\langle [Q],  X\rangle_{\P}   = \dim_k \Hom_{k\P}(Q, M)  - \dim_k \Hom_{k\P}(Q, N)  = \sum_{v\in\P}\delta_v(\dim_k M(v)-\dim_k N(v)),
\]
as claimed in \ref{Lem:Pairing-with-proj-inj-1}.

 Similarly,    $\Hom_{k\P}(M,G_v) \cong M(v)^*$, and $\Hom_{k\P}(N,G_v) \cong N(v)^*$, so
\[\langle  X, [G_v]\rangle_{\P}  = \chi_\P( X, [G_v]) = \dim_k M(v) - \dim_k N(v).\]
Part  \ref{Lem:Pairing-with-proj-inj-2} follows similarly.
\end{proof}

The following corollary is immediate. 
\begin{Cor}\label{Cor:proj-inj-ortho}
Let $\P$ be a finite poset, and let $u,v\in \P$ be any two objects. Then
\begin{enumerate}[(1)]
\item $\chi_\P( [F_v], [F_u])=\langle [F_v], [F_u]\rangle_\P = 
							\begin{cases}
									1 & v\ge u\\
									0 & \text{otherwise}
							\end{cases}$
							
\item $\chi_\P( [G_v], [G_u])=\langle [G_v], [G_u]\rangle_\P = 
							\begin{cases}
									1 & v\le u\\
									0 & \text{otherwise}
							\end{cases}$
\end{enumerate}
\end{Cor}

\chg{We are now ready to prove Theorem \ref{Th:pairing}, which  allows explicit computation of the Euler pairing.}

\begin{Thm}
\label{Thm:pairing}
Let $\P$ be a finite poset, and let $M, N\in k\P\mod$. Let 
\[0\to P_n\to\cdots\to P_0\to M\to 0, \quad\text{and}\quad 0\to N\to I_0\to\cdots I_n\to 0\]
be a projective resolution for $M$ and an injective resolution for $N$.
Write $P_i \cong \bigoplus \epsilon^i_vF_v$ and $I_j = \bigoplus \delta_u^j G_u$, with $\epsilon^i_v, \delta^j_u\in \N$ and $v, u\in\P$. Then
\begin{equation*}\chi_\P([M],[N]) = \sum_{v\in\P}\sum_{i=0}^n(-1)^i\epsilon_v^i\dim_k N(v) = \sum_{u\in\P}\sum_{j=0}^n(-1)^j\delta_u^j\dim_k M(u).
\end{equation*}
\end{Thm}
\begin{proof}
Apply $\Hom_{k\P}(-, N)$ to the given projective resolution of $M$. The $i$-th term of the resulting cochain complex is $\Hom_{k\P}(P_i,N)$ and its $i$-th cohomology is $\Ext_{k\P}^i(M,N)$. Hence,
\begin{align*}
\chi_\P([M],[N]) & \defeq \sum_{i=0}^n(-1)^i\dim_k\Ext^i_{k\P}(M, N) \\ & = \sum_{i=0}^n(-1)^i\dim_k\Hom_{k\P}(P_i,N)
& = \sum_{i=0}^n(-1)^i\langle [P_i],[N]\rangle_\P
& = \sum_{i=0}^n(-1)^i\chi_\P([P_i],[N]),
\end{align*}
where the second equality  follows by standard homological algebra,
the third equality follows by definition of the $\Hom$ pairing, and the last from  Remark \ref{Rem:pairing-with-Proj-Inj}. Similarly, one shows  
\[
\chi_\P([M], [N]) = \sum_{j=0}^n (-1)^j\langle  [M], [I_j]\rangle_\P=\sum_{j=0}^n (-1)^j\chi_\P([M],[I_j]).
\]

By Lemma \ref{Lem:Pairing-with-proj-inj}\ref{Lem:Pairing-with-proj-inj-1},
\[\chi_\P([M], [N]) = \sum_{i=0}^n(-1)^i\langle [P_i], [N]\rangle_\P = \sum_{v\in\P}\sum_{i=0}^n(-1)^i\epsilon_v^i\dim_k N(v).\]
Similarly, using Lemma \ref{Lem:Pairing-with-proj-inj}\ref{Lem:Pairing-with-proj-inj-2} one has
\[
\chi_\P([M], [N]) = \sum_{j=0}^n (-1)^j\chi_\P([M],[I_j]) = \sum_{u\in\P}\sum_{j=0}^n(-1)^j\delta_u^j\dim_kM(u).
\]
as claimed.
\end{proof}

A nice interpretation of the Euler pairing occurs for the constant module on $\P$ with value $k$.
\begin{Ex}\label{Ex:inner-cohomology}
	Let $\underline{k}$ denote the constant $k\P$-module with value $k$ at each object and the identity map for each morphism. Then for each $M\in k\P\mod$, 
	\[\Ext^*_{k\P}(\underline{k}, M) =  H^*(\P, M).\]
	Thus $\chi_\P([\underline{k}], [M]) = \chi(H^*(\P, M))$. In particular $\chi_\P([\underline{k}], [\underline{k}]) = \chi(|\P|)$, where $|\P|$ denotes the nerve of $\P$.
\end{Ex}

Combining Lemma \ref{Lem:Pairing-with-proj-inj} and Example \ref{Ex:inner-cohomology}, one observes that if  $\P$ has an initial object $\emptyset$, then $\underline{k}  = F_\emptyset$ is projective. In that case the positive degree cohomology of any $k\P$-module vanishes, and 
\[\chi_\P([\underline{k}], [M]) = \dim_k H^0(\P, M) = \dim_k M(\emptyset).\]

We end this section with two remarks concerning the Hom and Euler pairings.

\begin{Rem}\label{Rem:Hom-singular}
Unlike the Euler pairing that is non-singular when $\HH_\P$ is an acyclic quiver, the Hom pairing is generally singular on $\Gr(k\P)$. For instance for any $v\in\P$, the virtual module $X_v = [F_v]-[I_v]$ satisfies
\[\langle X_v,X_v\rangle_\P = \langle F_v, F_v\rangle_\P + \langle I_v, I_v \rangle_\P -\langle F_v, I_v\rangle_\P -\langle I_v, F_v\rangle_\P = 0.\]
However  $\langle [M], [M]\rangle_\P \geq 1$  for a genuine module $M\in k\P\mod$, because $\Hom_{k\P}(M,M)$ contains the identity transformation.
\end{Rem}

\begin{Rem}\label{Rem:Euler-pos-def-not}
Recall that if $\HH_\P$ is a tree, then by Lemma \ref{Lem:Euler-form} for any $M, N\in k\P\mod$,
\[\chi_\P([M],[N]) = \chi_{\HH_\P}(\Dimk_\P[M],\Dimk_\P[N]),\]
where $\Dimk_\P$ is the Hilbert homomorphism (Definition \ref{Def:dimvec}), and $\chi_{\HH_\P}$ is the Euler form for the digraph $\HH_\P$ (Definition \ref{Def:Euler-form}). The Euler square $\chi_\P([M],[M])$ coincides with the Tits form  $T_\P\colon \Gr(k\P)\to \Z$, \cite[Definition 4.1.2]{Derksen-Weyman}, and by \cite[Theorem 8.6]{Schiffler} the Tits form is positive definite if and only if $\HH_\P$ is of Dynkin type ADE, and positive semi-definite if and only if $\HH_\P$ is of extended type ADE. This corresponds to $\P$ being of finite and tame representation type, respectively. In general however, $T_\P$, and hence the Euler pairing, is indefinite. 
\end{Rem}


\section{Divergence, adjointness and the Laplacian}
\label{Sec:divergence}

In this section we define and study the basic properties of the divergence and  the Laplacian for  modules over finite posets.
Recall Definition \ref{Def:div-general}

\begin{Defi}\label{Def:Div}
	Let $\P$ be a poset. The \hadgesh{left and right divergence} are defined, respectively, to be the homomorphisms
	\[\nabla^*, \nabla_*\colon\Gr(k\Pone)\to \Gr(k\P)\]
	given by $\nabla^*[N] \defeq [L_\tau(N)]- [L_\sigma(N)]$, and $\nabla_*[N] \defeq [R_\tau(N)]- [R_\sigma(N)]$.
\end{Defi}

For $ X\in \Gr(k\P)$ and $ Y\in\Gr(k\Pone)$, we have the adjointness relations with respect to the Hom pairing: 
\[\langle \nabla^* Y,  X\rangle_\P = \langle  Y, \nabla X\rangle_{\Pone}\quad\text{and}\quad
\langle  X, \nabla_* Y\rangle_\P = \langle \nabla X,  Y\rangle_{\Pone}\]

We start by proving  Theorem \ref{Th:Divergence-tree}, which gives a formula for the computation of left and right divergence on $k\P$-modules, under the assumption that the Hasse diagram $\HH_\P$ is a tree. For any $y\in\P$ let $\caln_y$ denote the \hadgesh{neighbourhood of $y$ in $\P$}, i.e.\ the subgraph of $\HH_\P$ generated by $y$ and all its in- and out-neighbours.

\begin{Thm}\label{Thm:Divergence-tree}
Let $\P$ be a finite poset whose Hasse diagram is a tree, and let $N\in k\Pone\mod$.  For any object $y\in\P$ let $\caln_y$ denote the neighbourhood of $y$ in $\P$, and let $\widehat{\caln}_y$ be the associated line poset. Then the following statements hold.
\begin{enumerate}[(1)]
\item  The left divergence $\nabla^*[N]=[L_{\tau}(N)]- [L_{\sigma}(N)]$, where the left Kan extensions are given by 
\[L_{\tau}(N)(y) \cong \bigoplus_{(u,y)\in\Pone}N(u,y),\quad{and}\quad L_{\sigma}(N)(y)\cong \colim_{\widehat{\caln}_y}N|_{\widehat{\caln}_y}.\]
\label{Thm:Divergence-tree-1}
\item The right divergence $\nabla_*[N]=[R_{\tau}(N)]- [R_{\sigma}(N)]$, where the right Kan extensions are given by 
\[R_{\tau}(N)(y) \cong \lim_{\widehat{\caln}_y} N|_{\widehat{\caln}_y},\quad\text{and}\quad R_{\sigma}(N)(y)\cong \bigoplus_{(y,v)\in\Pone} N(y,v).\]
\label{Thm:Divergence-tree-2}
\end{enumerate}
\end{Thm}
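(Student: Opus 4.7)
The plan is to compute each of the four Kan extensions directly from the slice-category formulas
\[L_F(N)(y)=\colim_{F\downarrow y}N_\#,\qquad R_F(N)(y)=\lim_{y\downarrow F}N_\#,\]
recalled in Section~\ref{Subsec:Kan}, and to exploit the tree hypothesis on $\HH_\P$ to exhibit, in each case, a convenient final (respectively initial) subcategory of the (co)slice whose underlying diagram reproduces the right-hand side. The four statements fall naturally into two pairs: the ``sum'' formulas for $L_\phi(N)(y)$ and $R_\beta(N)(y)$ will follow once a discrete final/initial subcategory is located, whereas the ``$\widehat{\caln}_y$'' formulas for $L_\beta(N)(y)$ and $R_\phi(N)(y)$ require identifying the line poset of the star of $y$ as a cofinal (respectively coinitial) subdiagram.

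I would begin with the two ``sum'' formulas. An object of $\phi\downarrow y$ is a pair $((u,v),\varphi)$ with $(u,v)\in\Pone$ and $\varphi\colon v\le y$ a morphism of $\P$. Let $\mathcal{I}_y\subseteq\phi\downarrow y$ denote the full subcategory spanned by the pairs $((u,y),1_y)$ as $u$ ranges over the in-neighbours of $y$. No non-identity morphism of $\Pone$ can relate two distinct objects of the form $(u,y)$, so $\mathcal{I}_y$ is discrete. The key step is to verify that $\mathcal{I}_y$ is final in $\phi\downarrow y$. Given $((u,v),\varphi)$, the tree property of $\HH_\P$ makes the directed path from $v$ to $y$ unique; letting $u_0$ denote its penultimate vertex yields a morphism $(u,v)\to(u_0,y)$ in $\Pone$ lying over $\varphi$, and uniqueness of the tree path forces the under-slice to be connected. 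A colimit over a discrete category is a coproduct, so $L_\phi(N)(y)\cong\bigoplus_{(u,y)\in\Pone}N(u,y)$. The formula for $R_\beta(N)(y)$ follows by a dual argument, using the initial discrete subcategory $\mathcal{J}_y=\{((y,v),1_y)\}\subseteq y\downarrow\beta$.

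For the remaining two formulas, I would construct a natural functor $\widehat{\caln}_y\to\beta\downarrow y$ sending each vertex $(u,y)$ of the line poset (with $u$ an in-neighbour of $y$) to the pair $((u,y),u\le y)$, each vertex $(y,v)$ (with $v$ an out-neighbour of $y$) to the pair $((y,v),1_y)$, and each indecomposable relation $(u,y)<(y,v)$ in $\widehat{\caln}_y$ to the corresponding morphism $(u,y)\to(y,v)$ in $\Pone$. The plan is then to show this functor is final, so that $L_\beta(N)(y)\cong\colim_{\widehat{\caln}_y}N|_{\widehat{\caln}_y}$, and analogously to verify that the dual functor $\widehat{\caln}_y\to y\downarrow\phi$ is initial, yielding $R_\phi(N)(y)\cong\lim_{\widehat{\caln}_y}N|_{\widehat{\caln}_y}$. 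For an arbitrary object $((u',v'),\psi'\colon u'\le y)$ of $\beta\downarrow y$, the tree hypothesis on $\HH_\P$ is used to trace the unique path from $u'$ (or from $v'$) into the star of $y$, which produces the required morphism into $\widehat{\caln}_y$, and uniqueness of paths in a tree ensures that any two such factorisations are connected by a zigzag in the under-slice.

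The main obstacle I expect lies precisely in these cofinality and initiality verifications for $L_\beta$ and $R_\phi$. Unlike in the ``sum'' cases, the target $\widehat{\caln}_y$ is not discrete, so one must analyse the under- and over-slices in the presence of genuine non-identity relations and carefully distinguish the contributions of those edges $(u',v')\in\Pone$ according to whether $v'$ (respectively $u'$) lies on the path into or out of the star of $y$. The assumption that $\HH_\P$ is a tree is essential throughout: were it relaxed, the failure of path uniqueness would destroy both the existence of the factorisation and the connectedness of the relevant under/over-slice, so the reduction to the line poset of the star of $y$ would break down.
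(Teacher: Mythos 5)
Your strategy -- compute each Kan extension from the slice formulas $L_F(N)(y)=\colim_{F\downarrow y}N_\#$, $R_F(N)(y)=\lim_{y\downarrow F}N_\#$ and cut each (co)slice down to a final/initial subcategory using the tree hypothesis -- is exactly the route the paper takes. Your treatment of the two ``sum'' formulas is sound and matches the paper's argument: an object $((u,v),\varphi)$ of $\phi\downarrow y$ carries the constraint $v\le y$, so the unique tree path from $v$ to $y$ supplies a unique structure map to a unique object $((u_0,y),1_y)$ of your discrete subcategory $\mathcal{I}_y$, which is therefore final; dually for $\mathcal{J}_y\subseteq y\downarrow\beta$.

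The gap is precisely in the step you flag as the main obstacle: finality of $\widehat{\caln}_y\to\beta\downarrow y$ (and dually coinitiality into $y\downarrow\phi$). An object of $\beta\downarrow y$ is a pair $((u',v'),\,u'\le y)$; the constraint bears only on the \emph{source} $u'$, and nothing forces $v'$ to be comparable to $y$. If $u'<y$ and $u'$ has a second out-neighbour $v'$ on a different branch of the tree, then any relation $(u',v')\le (c,d)$ in $\Pone$ requires a directed path $u'\to v'\to\cdots$ reaching $y$, hence $v'\le y$; since $v'$ and $y$ are incomparable, the under-slice of $((u',v'),u'\le y)$ relative to $\widehat{\caln}_y$ is \emph{empty}, and the ``unique path from $u'$ (or from $v'$) into the star of $y$'' you invoke does not exist. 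The failure is not reparable by a better argument, because the formula itself is then false: take $\P$ with Hasse diagram $u'\to y$, $u'\to v'$. Then $\Pone$ is discrete on $\{(u',y),(u',v')\}$, $\beta\downarrow y$ is discrete on both objects, so $L_\beta(N)(y)\cong N(u',y)\oplus N(u',v')$ (equivalently, the adjunction gives $L_\beta(N)\cong F_{u'}\otimes(N(u',y)\oplus N(u',v'))$), whereas $\widehat{\caln}_y$ consists of the single vertex $(u',y)$ and the claimed colimit is $N(u',y)$. The dual failure occurs for $R_\phi$ when some $b>y$ has an in-neighbour incomparable to $y$. For what it is worth, the paper's own Step 4 asserts exactly the existence statement that fails here (``there is a unique minimal object $((u,y),u<y)\in\widehat{\caln}_y$ with $((a,b),a<y)<((u,y),u<y)$''), so the theorem as stated needs a stronger hypothesis -- e.g.\ that every $a<y$ has out-degree $1$ in $\HH_\P$ and every $b>y$ has in-degree $1$ -- under which your cofinality verification does go through.
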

\begin{proof}
The proof is carried out in four steps. In Step 1 we analyse the categories $\tau\downarrow y$ and $y\downarrow\sigma$. In Step 2 we study the categories $\sigma\downarrow y$ and $y\downarrow\tau$. Finally, in Steps 3 and 4, we prove Statements \ref{Thm:Divergence-tree-1} and  \ref{Thm:Divergence-tree-2}. Notice first that if $y\in\P$ is minimal then the overcategory $\tau\downarrow y$ and the undercategory $\tau\downarrow y$ are empty. Hence $L_\tau N(y) = 0 = R_\tau N(y)$. Similarly, if $y$ is maximal, then $y\downarrow\sigma=\emptyset = \sigma\downarrow y$. Hence $R_\sigma N(y) = 0 = L_\sigma N(y)$. 

\noindent{\bf Step 1.}
If $y$ is not minimal then  objects of the form $((u,y), y=y)$ are maximal in  $\tau\downarrow y$. Furthermore, if $(u,y)$ and $(u',y)$ are two distinct objects in $\Pone$, then
\[(\tau\downarrow y)_{\le((u,y), y=y)}\cap (\tau\downarrow y)_{\le((u',y), y=y)} = \emptyset,\]
since otherwise there is some object $((a,b), b\le y)\in \tau\downarrow y$, where $(a,b)\le (u,y)$ and $(a,b)\le (u', y)$. Thus there are two distinct paths $b\to u\to y$ and $b\to u'\to y$ contradicting the assumption that $\HH$ is a tree. 

If $y$ is not maximal then objects  of the form $((y,v), y=y)$ are minimal in $y\downarrow \sigma$, and if $(y,v), (y,v')\in\Pone$ are two distinct objects, then
\[(y\downarrow \sigma)_{\ge ((y,v), y=y)} \cap (y\downarrow \sigma)_{\ge ((y,v'), y=y)}  = \emptyset,\]
since otherwise there is an object  $((a,b), y\le a)\in y\downarrow\sigma$, such that
$(y,v)\le (a,b)$ and $(y,v')\le (a,b)$.
Thus one obtains two distinct paths $y\to v \to a$ and $y\to v'\to a$, contradicting the assumption that $\HH_\P$ is a tree. 

\smallskip

\noindent{\bf Step 2.} 
 If $y$ is not maximal then objects of the form $((y,v), y=y)$ are maximal in $\sigma\downarrow y$. If $(u,y), (u',y)\in\Pone$ are distinct objects, then
\[(\sigma\downarrow y)_{\le ((u,y), u<y)}\cap(\sigma\downarrow y)_{\le ((u',y), u'<y)}=\emptyset,\]
since otherwise there is some object $((a,b), a\le y) \in \sigma\downarrow y$, where $(a,b)\le (u,y)$ and $(a,b)\le (u',y)$. Thus there are two distinct paths $b\to u\to y$ and $b\to u'\to y$, in contradiction to the assumption that $\HH_\P$ is a tree. Notice that the full subcategory of $\sigma\downarrow y$ with objects given by the maximal objects of the form $((y,v), y=y)$ and their in-neighbours of the form $((u,y), u<y)$ is exactly the line poset $\widehat{\caln}_y$ associated to the neighbourhood  $\caln_y$ of $y$ in $\P$. 

If $y$ is not minimal, then objects of the form $((u,y), y=y)$ are minimal in  $y\downarrow \tau$. Let $(y,v), (y,v')\in \Pone$ be distinct objects. Then 
\[(y\downarrow\tau)_{\ge ((y,v), y<v)} \cap (y\downarrow\tau)_{\ge ((y,v'), y<v')} = \emptyset,\]
since otherwise there is some $((a,b), y\le b)\in y\downarrow\tau$, such that $(y,v)\le(a,b)$ and $(y,v')\le(a,b)$.
Then one has two distinct paths $y\to v\to a$ and $y \to v' \to a$ contradicting the hypothesis that $\HH_\P$ is a tree. 
Notice  that  in this case as well, the full subcategory of $y\downarrow\tau$ consisting of the minimal objects $((u,y), y=y)$ and their out-neighbours of the form $((y,v), y<v)$ is 
exactly the line poset $\widehat{\caln}_y$. 

\smallskip

\noindent{\bf Step 3.}
By the first part of Step 1, the overcategory $\tau\downarrow y$ splits as a disjoint union of subcategories, one for each object of the form $(u,y)\in\Pone$, and in each such subcategory the object $((u,y), y=y)$ is a terminal object. Hence
\[L_{\tau}(N)(y) \defeq \colim_{\tau\downarrow y} N_\# \cong \bigoplus_{(u,y)\in\Pone}N(u,y),\]
where $N_\#$ denotes the restriction of $N$ to $\tau\downarrow y$. This proves the first statement in Part \ref{Thm:Divergence-tree-1}.

Similarly, by the second part of Step 1, the undercategory $y\downarrow\sigma$ splits as a disjoint union of subcategories, one for each object of the form $(y,v)\in\Pone$, and in each such subcategory the object $((y,v), y=y)$ is initial. Hence 
\[R_{\sigma}(N)(y) \defeq \lim_{y\downarrow\sigma} N_\# \cong \prod_{(y,v)\in\Pone} N(y,v)\cong \bigoplus_{(y,v)\in\Pone} N(y,v),\]
where $N_\#$ is the restriction of $N$ to $y\downarrow\sigma$ and the second isomorphism follows because the product is over a finite set. This proves the second statement in Part \ref{Thm:Divergence-tree-2}

\smallskip

\noindent{\bf Step 4.} 
Recall that if $F\colon \calc\to\A$ is any functor and $\A$ is a bicomplete category (i.e.\ limits and colimits exist in $\A$), then for any $M\in\A$ there is a natural bijection
\[\Hom_\A(\colim_\calc F , M) \leftrightarrow \Hom_{\A^\calc}(F, \underline{M}),\]
where $\underline{M}$ is the constant functor with value $M$ on $\C$. 

Let $\iota_y\colon\widehat{\caln}_y\to \sigma\downarrow y$ denote the inclusion. Then for any $N\in k\Pone\mod$ one has the restriction functor $\iota_y^*\colon(\sigma\downarrow y)\mod\to \widehat{\caln}_y\mod$ and an induced map 
\begin{equation}\colim_{\widehat{\caln}_y}N|_{\widehat{\caln}_y}=\colim_{\widehat{\caln}_y}  (N_\#\circ\iota_y) \xto{\iota_y^*}\colim_{\sigma\downarrow y} N_\#\defeq L_{\sigma}(N)(y).
\label{Eqn:iota_y^*}\end{equation}

Thus we get a commutative square for any module $N\in k\Pone\mod$ and $M\in\VVect$:
\[\xymatrix{
\Hom_{\VVect}\left(\colim_{\sigma\downarrow y}N_\#, M\right)\ar[r]\ar[d]^{\cong} & \Hom_{\VVect}\left(\colim_{\widehat{\caln}_y} (N_\#\circ\iota_y), M\right)\ar[d]^{\cong}\\
\Hom_{k(\sigma\downarrow y)\mod}(N_\#, \underline{M}) \ar[r] & \Hom_{k\widehat{\caln}_y\mod}(N_\#\circ\iota_y, \underline{M})
}\]
where both horizontal arrows are  induced by $\iota_y^*$. By the first part of Step 2, every module $N\colon\sigma_P\downarrow y\to \VVect$ factors uniquely through $\widehat{\caln}_y$ because if an object $((a,b), a<y)$ is not in $\widehat{\caln}_y$, then there is a unique minimal object $((u,y),u<y)\in\widehat{\caln}_y$ such that $((a,b), a<y)<((u,y),u<y)$, and so any $(\sigma\downarrow y)$-diagram is uniquely determined by its restriction to $\widehat{\caln}_y$. It follows that the bottom horizontal arrow is also an isomorphism, and by commutativity, so is the top horizontal arrow. Since this holds for any $M\in\VVect$, it follows that $\iota_y^*$ in Equation \textcolor{red}{(\ref{Eqn:iota_y^*})} is an isomorphism.

Similarly, for $F\colon\calc\to \A$, one has a natural bijection
\[\Hom_\A(M, \lim_\calc F ) \leftrightarrow \Hom_{\A^\calc}(\underline{M}, F).\]
For any $N\in k\Pone\mod$, the inclusion $\eta^y\colon \widehat{\caln_y}\to y\downarrow \tau$ induces a map 
\begin{equation}R_{\tau}(N)(y)\defeq\lim_{y\downarrow \tau} N_\# \xto{\eta^y_*}\lim_{\widehat{\caln}_y} (N_\# \circ \eta^y) =  \lim_{\widehat{\caln}_y} N|_{\widehat{\caln}_y}.\label{Eqn: eta^y_*}\end{equation}

Again, we have a commutative square for $N\in (y\downarrow\tau)\mod$ and $M\in\VVect$:
\[\xymatrix{
\Hom_{\VVect}\left(M, \lim_{y\downarrow \tau} N_\#\right) \ar[r]\ar[d]^\cong & \Hom_{\VVect}\left(M, \lim_{\widehat{\caln}_y}(N_\# \circ \eta^y)\right)\ar[d]^\cong\\
\Hom_{k(y\downarrow \tau)\mod}(\underline{M}, N_\#)\ar[r] &\Hom_{k\widehat{\caln}_y\mod}(\underline{M}, N_\# \circ \eta^y)
}\]
where both horizontal arrows are induced by $\eta_*^y$.
By the second part of Step 2, every $(y\downarrow\tau)$-diagram is determined uniquely by its restriction to $\widehat{\caln}_y$, because if an object $((a,b), y<b)$ is not in $\widehat{\caln}_y$ then there is a unique maximal object  $((y,v), y<v)\in\widehat{\caln}_y$ such that $((a,b), y<b)>((y,v), y<v)$. It follows that the bottom horizontal arrow is an isomorphism, and hence so is the top horizontal arrow. Since this holds for an arbitrary $M\in\VVect$, it follows that $\eta_*^y$ in Equation \textcolor{red}{(\ref{Eqn: eta^y_*}) }is an isomorphism.
\end{proof}

The following example motivates referring to $\nabla^*$ and $\nabla_*$ as divergence. 

\begin{Ex}
Let $\P$ be the poset whose Hasse diagram is shown on the left below, with its line digraph on the right.
\[\xymatrix{
3 && 4 && 03 && 04\\
& 0\ar[ul]\ar[ur]&&\\
1\ar[ur] && 2\ar[ul]&& 10\ar[uu]\ar[uurr] && 20\ar[uu]\ar[uull]
}\]
We compute the left and right divergence of a module $M\in k\Pone\mod$ at the object $0$.

Starting with left divergence, by Theorem \ref{Thm:Divergence-tree},
\[L_\tau M(0) \cong M(10)\oplus M(20), \quad\text{and}\quad  L_\sigma M(0) \cong \colim_{\Pone} M.\]
The computation of this colimit is elementary and can be shown to be isomorphic to the cokernel of the map
\begin{equation}
M(10)\oplus M(20) \to M(03) \oplus M(04)\label{lim-colim-map}\end{equation}
that takes an element $(x,z)$ to $(M_{103}(x) + M_{203}(z), -M_{104}(x) - M_{204}(z))$, 
where $M_{xyz}$ stands for $M(xy<yz)$ for short.

The computation of right divergence is similarly basic. By Theorem \ref{Thm:Divergence-tree} again we have
\[R_\sigma M (0) \cong M(03)\oplus M(04),\quad\text{and}\quad  R_\tau M(0) \cong \lim_{\Pone} M.\]
 The computation of the limit is again elementary, and can be shown to be the kernel of the same map  \textcolor{red}{(\ref{lim-colim-map})}. Thus we have an exact sequence
\[0 \to R_\tau M(0) \to M(10)\oplus M(20) \to M(03)\oplus M(04)\to L_\sigma M (0) \to 0.\]

Consider the values of these functors in terms of  ``flow" relative to the object $0$. Thus one may think of $L_\tau M(0)\cong M(10)\oplus M(20)$ as the in-flow at $0$, and of $L_\sigma M(0)$ as the quotient of the out-flow at $0$,  where the ``effect" (image) of the in-flow has been divided out, or in other words, the net out-flow at $0$. Similarly, $R_\sigma M(0)\cong M(03) \oplus M(04)$ can be thought of as the out-flow at $0$, while $R_\tau M(0)$ is the flow that is ``wasted" (vanishes) on the way to $0$. 

Thus  $\Dimk_\P(\nabla^* [M])(0)<0$ indicates that the net out-flow at $0$ is larger than the in-flow, or that passing through $0$ ``amplifies''  flow. Similar interpretation can be given for $\Dimk_\P(\nabla^* [M])(0)>0$ and in the corresponding situations for $\nabla_*[M](0)$. 
\end{Ex}

\chg{Next we observe that the Euler pairing is symmetric when one of the components is a gradient.
\begin{Prop}\label{Prop:pairing-with-grad}
Let $M\in k\P\mod$ and let $N\in k\Pone\mod$. Then
\[\chi_{\Pone}(\nabla[M], [N]) = \sum_{(u,v)\in\Pone}\Dimk_{\Pone}(\nabla[M])(u,v)\cdot\chi_{\Pone}([S_{u,v}], [N]) = \chi_{\Pone}([N],\nabla[M]).\]
\end{Prop}
}
\begin{proof}
	\chg{Let $K_M, C_M\in k\Pone\mod$ be the virtually trivial modules, defined as the kernel and cokernel of the natural transformation $\eta\colon\sigma^*\to\tau^*$ (See Definition \ref{Def:Ker-Coker}).  
	Thus, one has an exact sequence of $k\Pone$-modules
	\[0\to K_M \to \sigma^* M \xto{\eta_{M}} \tau^* M \to C_M\to 0,\]
	which can be split into two short exact sequences in $k\Pone\mod$
	\begin{equation}
		0\to K_{M}\to \sigma^* M \to I_M\to 0 \quad\text{and}\quad 0\to I_M\to \tau^* M \to C_{M}\to 0,
		\label{Eqn:SES-Image}
	\end{equation}
	where $I_M$ is the image functor. By applying $\Hom_{k\Pone}(-, N)$, these give long exact $\Ext$ sequences and it follows by additivity of the Euler characteristic, that 
	\[\chi(\Ext_{k\Pone}^*(\sigma^* M, N)) = \chi(\Ext_{k\Pone}^*(K_M, N)) +\chi(\Ext_{k\Pone}^*(I_M, N))\]
	and that 
	\[\chi(\Ext_{k\Pone}^*(\tau^* M, N)) = \chi(\Ext_{k\Pone}^*(C_M, N)) +\chi(\Ext_{k\Pone}^*(I_M, N)).\]
	Thus
	\[
		\chi_{\Pone}(\nabla[M], [N])   =\chi_{\Pone}([\tau^* M], [N])- \chi_{\Pone}([\sigma^* M], [N]) 
		= \chi_{\Pone}([C_M], [N])-\chi_{\Pone}([K_M], [N]).
	\]
	Similarly, by applying $\Hom_{k\Pone}(N, -)$ to the short exact sequences \textcolor{red}{(\ref{Eqn:SES-Image})}, we have 
	\[\chi_{\Pone}([N],\nabla[M])  
		= \chi_{\Pone}([N], [C_M])-\chi_{\Pone}([N], [K_M]).
	\]
By Definition \ref{Def:Ker-Coker} both $K_M$ and $C_M$ are virtually trivial. Hence they are isomorphic to a direct sum of simple modules. Thus
\[[K_M] = \sum_{(u,v)\in\Pone} \Dimk_{\Pone}[K_M](u,v)\cdot [S_{u,v}], \quad\text{and}\quad [C_M] = \sum_{(u,v)\in\Pone} \Dimk_{\Pone}[C_M](u,v)\cdot [S_{u,v}].\]
Thus,
\begin{align*}
\chi_{\Pone}(\nabla[M], [N]) & = \chi_{\Pone}([C_M], [N])-\chi_{\Pone}([K_M], [N])=\\
		& \sum_{(u,v)\in\Pone} \left(\Dimk_{\Pone}[C_M](u,v)- \Dimk_{\Pone}[K_M](u,v)\right)\cdot \chi_{\Pone}([S_{u,v}],[N]) =\\
		& \sum_{(u,v)\in\Pone} \left(\Dimk_{\Pone}[\tau^*M](u,v)- \Dimk_{\Pone}[\sigma^*M](u,v)\right)\cdot \chi_{\Pone}([S_{u,v}],[N]) =\\
		&  \sum_{(u,v)\in\Pone} \Dimk_{\Pone}(\nabla[M])(u,v)\cdot \chi_{\Pone}([S_{u,v}],[N]).
\end{align*}
This proves the first equality, and the second follows by analogy.}
\end{proof}

Next we examine an implication of vanishing left divergence in the case where $\HH_\P$ is a rooted tree. 

\begin{Prop}
	\label{Prop:zero-divergence}
	Let $\P$ be a finite poset  such that its Hasse diagram $\HH_\P$ is a rooted tree. Let $ X = [U]-[V]\in \Gr(k\Pone)$ be any element, with $U, V\in k\Pone\mod$, such that $\nabla^* X = 0$. Then $\dim_kU(x,y) = \dim_kV(x,y)$ for all $(x,y)\in\Pone$. \chg{In particular for any $U\in k\Pone\mod$, $\nabla^*[U] =0$ if and only if $U=0$.}
\end{Prop}
\begin{proof} 
Let $x_0$ denote the unique minimal object in $\P$. For any $z\in \P$ there is a unique path from $x_0$ to $z$, and since $\HH_\P$ is a tree, it is the longest path in $\P$ that ends in $z$. Furthermore, the assumption that $\HH_\P$ is a rooted tree implies that the in-degree at each vertex in $\HH_\P$ except $x_0$ is exactly $1$. Fix an object $(x,y)\in\Pone$, and let $n = d(x_0,y)$, where $d(-,-)$ denotes the directed path distance function on $\HH_\P$.  Let $G_{(x,y)}$ be the indecomposable injective $k\Pone$-module determined by the object $(x,y)$ (see Definition \ref{Def:Proj-Inj-Simp}\ref{Def:Proj-Inj-Simp-Inj}). By Lemma \ref{Lem:GF-integrable} the exists a module $M\in k\P\mod$ such that $\nabla[M] = [G_{(x,y)}]$. Hence,
	\begin{align*}
		\dim_kU(x,y) = & \langle [U], [G_{(x,y)}]\rangle_{\Pone}  =  \langle [U],\nabla[M]\rangle_{\Pone} = \\
		& \langle\nabla^*[U], [M]\rangle_\P =   \langle\nabla^*[V], [M]\rangle_\P=  \\ 
		& \langle[V], \nabla[M]\rangle_{\Pone} = \langle[V], [G_{(x,y)}]\rangle_{\Pone} = \dim_k V(x,y),
	\end{align*}
where the first and last equalities follow from injectivity of $G_{(x,y)}$ in $k\Pone\mod$ and Lemma \ref{Lem:Pairing-with-proj-inj}. The third and sixth equalities follow by adjointness. \chg{The second statement follows at once by setting $V=0$.}
\end{proof}

\chg{An immediate consequence of Proposition \ref{Prop:zero-divergence} is that under its hypotheses, if $\nabla^*\nabla[M] = 0$, then $\dim_kM(x) =\dim_kM(y)$ for all objects $(x,y)\in \Pone$.}

With gradient and divergence operators in place, we can now define the corresponding Laplacians.
\begin{Defi}\label{Def:Laplacians}
	Let $\P$ be a finite poset. Define the \hadgesh{left and right Laplacians} 
	$\Lap^0$ and $\Lap_0$ respectively in $\End(\Gr(k\P))$, to be the  group endomorphisms
	\[\Lap^0  \defeq \nabla^*\circ\nabla\quad\text{and}\quad \Lap_0\defeq \nabla_*\circ\nabla.\]
\end{Defi}

\begin{Cor}
	\label{Cor:Harmonic}
	Let $\P$ be a finite poset, whose Hasse diagram is a rooted tree, and let $ X = [M]-[N]\in\Gr(k\P)$ with $M, N\in k\P\mod$ be a  virtual module such that $\Delta^0[X]=0$. Then, for each object $(u,v)\in\Pone$,
	\[\dim_k M(v) - \dim_k M(u) = \dim_k N(v) - \dim_k N(u).\]
	In particular, if $N=0$, then for each $x, y\in \P$, $\dim_kM(x) = \dim_kM(y)$.
\end{Cor}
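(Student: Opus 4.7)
The plan is to reduce the harmonicity of $[X]\in\Gr(k\P)$ to Proposition \ref{Prop:zero-divergence} applied to the virtual $k\Pone$-module $\nabla[X]$. First I would write
\[\nabla[X] = \nabla[M]-\nabla[N] = [\phi^*M\oplus\beta^*N] - [\beta^*M\oplus\phi^*N]\]
in $\Gr(k\Pone)$, so that $\nabla[X] = [U]-[V]$ with $U = \phi^*M\oplus\beta^*N$ and $V = \beta^*M\oplus\phi^*N$, both genuine $k\Pone$-modules.

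Next, the left harmonic hypothesis says $\nabla^*\nabla[X] = \nabla^*([U]-[V]) = 0$. Since $\HH_\P$ is a rooted tree, the hypotheses of Proposition \ref{Prop:zero-divergence} apply directly to $\nabla[X]$, yielding the pointwise equality $\dim_k U(u,v) = \dim_k V(u,v)$ for every object $(u,v)\in\Pone$. Unpacking the definitions of $U$ and $V$, and using that $\phi^*M(u,v) = M(v)$, $\beta^*M(u,v) = M(u)$ (and likewise for $N$), this becomes
\[\dim_k M(v) + \dim_k N(u) = \dim_k M(u) + \dim_k N(v),\]
which rearranges to the claimed identity $\dim_k M(v) - \dim_k M(u) = \dim_k N(v) - \dim_k N(u)$.

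For the final assertion, setting $N=0$ gives $\dim_k M(u) = \dim_k M(v)$ for every indecomposable relation $u<v$ in $\P$. Since $\HH_\P$ is a rooted tree, it is in particular connected, so any two objects $x,y\in\P$ can be joined by a finite sequence of indecomposable relations (traversed in either direction) in $\HH_\P$. Applying the equality of dimensions along each edge of such a path chains together to give $\dim_k M(x) = \dim_k M(y)$, completing the proof. I do not anticipate a genuine obstacle here: the work has already been done in Proposition \ref{Prop:zero-divergence}, and the corollary is essentially a bookkeeping exercise translating the adjunction $\nabla^* \dashv \nabla$ through the difference defining $\nabla[X]$.
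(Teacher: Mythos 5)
Your proof is correct and follows essentially the same route as the paper: write $\nabla[X]$ as a difference of the genuine modules $\phi^*M\oplus\beta^*N$ and $\phi^*N\oplus\beta^*M$, apply Proposition \ref{Prop:zero-divergence} to conclude pointwise equality of dimensions, and use connectivity of the rooted tree for the final claim. No issues.
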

\begin{proof}
	Write 
	\[\nabla X = [\tau^*M\oplus\sigma^*N] - [\tau^*N\oplus\sigma^*M].\]
	Then $0=\Lap^0 X = \nabla^*(\nabla X)$ and  Proposition \ref{Prop:zero-divergence} applies.  Thus, for each $(u,v)\in\Pone$, 
	\begin{align*}
		\dim_k M(v) + \dim_k N(u)  =&  \dim_k(\tau^*M(u,v)\oplus\sigma^*N(u,v)) = \\
		& \dim_k(\tau^*N(u,v)\oplus\sigma^*M(u,v)) = \\
		& \dim_kN(v) + \dim_kM(u).\\
	\end{align*}
	The first claim follows. 				
	
	Since $\P$ is generated by a rooted tree, it is in particular connected. The second claim follows from the first by connectivity of $\P$ and induction on the length of paths in $\P$. 
	\end{proof}

\chg{Recall that our gradient is a categorified version of the discrete gradient on digraphs \cite{Lim}, and similarly, the left and right divergence operators are a categorification of the discrete divergence (See Examples \ref{Ex:standard-grad} and \ref{Ex:standard-div}). In graph theory one can define for a digraph $\G = (V,E)$ a cochain complex $C_*(\G)$, such that $C_0(\G)$ and $C_1(\G)$ are the vector spaces of functions from  $V$ and $E$ to the ground field $k$, respectively. With this setup one shows that the kernel of the Laplacian and that of the gradient coincide \cite[Section 4.3]{Lim}. 
It makes sense to ask whether the same holds in our context. Theorem \ref{Th:Lap-Grad-Kernels} states that this is indeed the case when $\P = [n]$, namely the poset whose  objects are all $0\le i \le n$ with the usual order relation. The theorem follows rather easily from our next, more general,  proposition.}

\chg{\begin{Prop}\label{Prop:Vanishing-Div-Line}
Fix a positive integer $n$ and let   $\P= [n]$.  Then $\nabla^*, \nabla_*\colon \Gr(k\Pone)\to\Gr(k\P)$ are  monomorphisms. 
\end{Prop}}
\begin{proof}
\chg{Notice first  that for any $k\in \P$, the  over and under categories of $\tau$ and $\sigma$ take the following form:
\begin{itemize}
\item $\tau\downarrow k = \emptyset$ if $k=0$ and $(0,1)<(1,2)<\cdots<(k-1,k)$ otherwise.
\item $\sigma\downarrow k = (0,1)<(1,2)<\cdots<(k,k+1)$ if $k<n$ and $(0,1)<(1,2)<\cdots<(n-1,n)$ otherwise.
\item $k\downarrow\tau = (k-1,k)<(k,k+1)<\cdots (n-1,n)$ if $k>0$ and $(0,1)<\cdots<(n-1,n)$ otherwise. 
\item $k\downarrow\sigma = \emptyset$ if $k=n$ and $(k,k+1)<\cdots<(n-1,n)$  otherwise.
\end{itemize}
Thus for a module $M\in k\Pone$, the left Kan extension at an object $x$ is determined by the value of $M$ on the maximal object in $\alpha\downarrow x$, where $\alpha = \tau$ or $\sigma$. Similarly, the right Kan extension at $x$ is determined by the value of $M$ on the minimal object in $x\downarrow \alpha$. The following table gives the values of the corresponding functor on each of the objects.}

\chg{\[\begin{array}{| l | c | c | c | c |}
\hline
x & L_\tau(M)(x) & L_\sigma(M)(x) &  R_\tau(M)(x) & R_\sigma(M)(x)\\
\hline
0 & 0 & M(0,1)& M(0,1) & M(0,1)\\
\hline
0<k<n & M(k-1,k) & M(k, k+1) & M(k-1,k) & M(k, k+1)\\
\hline
n & M(n-1,n) & M(n-1,n) &M(n-1,n) & 0\\
\hline
\end{array}\]
Thus the modules $L_\tau(M), L_\sigma(M)\in k\Pone\mod$ take the following form:
\[\xymatrix{
L_\tau(M)\colon& 0\ar[r] &M(0,1)\ar[r] & M(1,2)\ar[r] &\cdots\ar[r]&  M(n-1, n)\\
L_\sigma(M)\colon &  M(0,1)\ar[r] & M(1,2)\ar[r]& M(2,3)\ar[r] & \cdots\ar[r] & M(n-1, n).
}\]
In particular for interval modules $[(m-1,m),(k-1,k)]$ in $\Pone$ one has $L_\tau[(m-1,m),(k-1,k)] = [m,k]$ and $L_\sigma[(m-1,m),(k-1,k)] = [m-1,k-1]$, if $k<n$ and $L_\sigma[(m-1,m),(n-1,n)] = [m-1,n]$. Thus both $L_\tau$ and $L_\sigma$ are injective on classes of intervals, and hence they are injective as homomorphisms.
}

\chg{Let $M, N\in k\Pone\mod$ be any modules such that $\nabla^*[M] = \nabla^*[N]$. We will show that $M\cong N$. Write $M$ and $N$ as a sum of intervals,
\[M = \bigoplus_s I_s,\quad\text{and}\quad  N = \bigoplus_r J_r.\]
We may assume without loss of generality that $I_s\neq J_r$ for all $s$ and $r$, since any such duplication can be removed without changing the equality of divergence. Similarly, we may assume that there are no intervals $I, J$ in either side such that $L_\tau[I] = L_\sigma[J]$, since any such intervals will not be seen by the divergence. By hypothesis we have 
\[\sum_sL_\tau[I_s] + \sum_rL_\sigma[J_r] = L_\tau[M] + L_\sigma[N] = L_\sigma[M] + L_\tau[N] = \sum_sL_\sigma[I_s] + \sum_rL_\tau[J_r],\]
which gives an interval decomposition for both sides. Let $I_s$ be any interval on the left hand side of this equation. Then, either $L_\tau[I_s]=L_\sigma[I_t]$ for some $t$ or $L_\tau[I_s]=L_\tau[J_r]$ for some $r$. The first possibility is excluded by the discussion above, and so we must have $L_\tau[I_s]=L_\tau[J_r]$. But the calculation of $L_\tau$   on intervals shows that it is  injective on intervals. Hence $I_s = J_r$ and both modules can be removed from the sum without changing the equality. Similarly, if we let $J_r$ be an interval on the left hand side, we can show, using our assumption and injectivity of $L_\sigma$ on intervals, that $J_r = I_s$ for some $s$, and hence we may remove both from the equation. Since all sums are finite, proceeding by induction we see that the interval decomposition of $M$ and $N$ coincide, and hence $M\cong N$. }

\chg{The proof for the right divergence is similar. One observes that $R_\sigma[(m,m+1),(k,k+1)] = [m,k]$ for all $m\geq 0$ and $k\le n-1$, and $R_\tau[(m,m+1),(k,k+1)] = [m+1,k+1]$ if $m>0$ and $R_\tau[(0,1), (k,k+1)] = [0,k+1]$, for all $k\le n-1$. In particular $R_\tau$ and $R_\sigma$ are injective. The argument then proceeds similarly to the case of left divergence. This completes the proof.}
\end{proof}

\chg{As a corollary we obtain a proof of Theorem \ref{Th:Lap-Grad-Kernels}, our $0$-dimensional version of Hodge's theorem. 
\begin{Thm}\label{Thm:Lap-Grad-Kernels}
Let $\P=[n]$ for $n\geq 0$. Then $\Ker\nabla = \Ker\Delta^0 = \Ker\Delta_0$ 
\end{Thm}
\begin{proof}
Clearly $\Ker\nabla\subseteq\Ker\Delta^0$. Thus it remains to prove the converse inclusion.
Let $X = [M]-[N]\in\Ker\Delta^0$ be any element. Thus $\Delta^0[M] = \Delta^0[N]$. By definition and rearranging of summands, this holds if and only if
\[L_\tau(\tau^*M\oplus\sigma^*N)\oplus L_\sigma(\tau^*N\oplus\sigma^*M) \cong L_\tau(\tau^*N\oplus\sigma^*M)\oplus L_\sigma(\tau^*M\oplus\sigma^*N).\]
Equivalently we have
\[\nabla^*(\tau^*M\oplus\sigma^*N) \cong \nabla^*(\tau^*N\oplus\sigma^*M),\]
and by Proposition \ref{Prop:Vanishing-Div-Line}, $\tau^*M\oplus\sigma^*N \cong \tau^*N\oplus\sigma^*M$, which holds by definition if and only if $\nabla[M] = \nabla[N]$, namely if and only if $X\in\Ker\nabla$. The proof for $\Delta_0$ is essentially the same.
\end{proof}
}

\newcommand{\out}{\mathrm{out}}
\newcommand{\inn}{\mathrm{in}}

\section{An Example}
\label{Sec:Applications}
In this final section we demonstrate some nice properties of the gradient by means of an example that highlight the advantages that it offers, particularly in potential applications. As already pointed out in the introduction, almost all quivers (including almost all finite posets) are of infinite or wild representation type and so classifying their indecomposable representations is hard or, in the wild case, impossible. Hence understanding modules globally by means of their indecomposable summands is not feasible.

Our calculus  methods lend themselves naturally to  \hadgesh{local} investigation of modules, and hence are not tied to the representation type of posets. We consider   a family of posets that is generally of wild representation type, but where the associated line poset has finite representation type, and hence the gradient of any module over these posets can be well understood. 

An in-depth investigation of modules over so called \hadgesh{commutative ladders} appeared in \cite{EH}. While classification of indecomposable modules over those ladder posets may be hard or even impossible, the gradient is much easier to understand.

Denote a left-to-right arrow by $F$ and a right-to-left arrow by $B$. Following terminology from quiver representations, we call the finite poset with \(n\) objects obtained schematically as a juxtaposition of arrows of type $F$ or $B$ in any order as an \hadgesh{$\mathbb{A}_n$ poset}. By Gabriel's theorem \cite{Gabriel}, \cite[Theorem 4.2.4]{Derksen-Weyman} such posets are of finite  representation type, i.e.\ they admit  finitely many isomorphism classes of indecomposable representations. Any $\mathbb{A}_n$ poset is uniquely characterised by a sequence $X_1X_2\cdots X_{n-1}$, where each $X_i$ is either $F$ or $B$. We refer to the characterising sequence as the \hadgesh{type} of the poset. A \hadgesh{commutative ladder of length $n$} is a poset that can be written schematically as two $\mathbb{A}_n$ posets $L_1$ and $L_2$ of the same type with arrows from each object of $L_1$ to the corresponding object in $L_2$. By \cite[Theorem 3, 4]{EH} general commutative ladders of any type with length at most $4$ are of finite representation type, whereas they are of infinite representation type if their length is \(\geq 5\).

Consider two specific types of commutative ladders. The first is of any length and is made of a related pair of alternating sequences of type either $FBFB\cdots$ or $BFBF\cdots$ (either may end with $B$ or $F$). We refer to a poset of this form and length $n$ as a \hadgesh{zig-zag ladder of length $n$}. The second is of even length $2n$ and is made of a related pair of sequences of the form $FFBBFF\cdots$ or $BBFFBB\cdots$ (either may end with $BB$ or $FF$). This type will be referred to as a \hadgesh{double zig-zag ladder of length $2n$}. As pointed out any commutative ladder poset of length at least $5$ is of infinite representation type. 

\begin{Lem}\label{Lem:zig-zag-ladders}
Let $\P$ be a commutative ladder poset. Then
\begin{enumerate}[(1)]
\item if $\P$ is a zig-zag ladder of any length, then $\Pone$ is a disjoint union of $\mathbb{A}_n$ posets with \(n \leq 2\), and \label{Lem:zig-zag-ladders:1}
\item if $\P$ is a double zig-zag ladder of any length, then $\Pone$ is an $\mathbb{A}_n$ poset.  \label{Lem:zig-zag-ladders:2}
\end{enumerate}
\end{Lem}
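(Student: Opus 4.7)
Both parts are combinatorial statements about the structure of $\widehat{\HH}_\P$, which by Example \ref{Ex:P-hat} is the Hasse diagram of $\Pone$. The proof therefore reduces to a direct enumeration: first write $\HH_\P$ explicitly for each ladder type, then identify the edges of $\widehat{\HH}_\P$ by listing all composable pairs $(e,e')$ of indecomposable relations, i.e.\ pairs such that the target of $e$ equals the source of $e'$ in $\HH_\P$. Since an edge of $\widehat{\HH}_\P$ is uniquely determined by the intermediate vertex $w\in\HH_\P$ together with a choice of one in-edge and one out-edge at $w$, the analysis is entirely local: for each vertex $w$ of $\HH_\P$ I would compute the in- and out-degree (separately from horizontal edges in the top and bottom rows, and from vertical rungs) and multiply.

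For part (\ref{Lem:zig-zag-ladders:1}), the key observation is that under the alternating zig-zag pattern $FBFB\cdots$, every internal vertex of a row is either a local maximum (two incoming horizontal edges, no outgoing horizontal edge) or a local minimum (two outgoing horizontal edges, no incoming horizontal edge); the boundary vertices are even more restricted. Adjoining a single vertical rung at each column contributes a single extra in-edge or out-edge at every vertex. A case analysis on the four types of vertex (top local max, top local min, bottom local max, bottom local min, plus the boundaries) shows that the product (in-degree)$\cdot$(out-degree) at each $w$ is at most one, with the in- and out-edges accounted for being distinct for different choices of $w$. Consequently, each edge of $\widehat{\HH}_\P$ is an isolated arc whose two endpoints are not incident to any other edge of $\widehat{\HH}_\P$, and every remaining vertex is isolated. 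Hence the connected components of $\widehat{\HH}_\P$ have at most two vertices, giving $\mathbb{A}_n$-posets with $n\le 2$.

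For part (\ref{Lem:zig-zag-ladders:2}), the pattern $FFBBFF\cdots$ in each row contains consecutive $F$-pairs and $B$-pairs, which produce in-row compositions $h_i\to h_{i+1}$ (respectively $h_{i+1}'\to h_i'$). At the ``turning" columns between an $FF$ and a $BB$ block, however, the internal vertex has purely incoming (or purely outgoing) horizontal edges, so the verticals at these columns are the only out-edges (in-edges) there. Enumerating compositions at each vertex of $\HH_\P$ one sees that the verticals at turning columns bridge the in-row chains through the opposite row, producing a single directed path traversing all edges of $\HH_\P$. A direct count verifies that the number of edges of $\widehat{\HH}_\P$ equals one less than the number of its vertices, matching the count for $\mathbb{A}_n$.

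\textbf{Main obstacle.} The subtle step is in part (\ref{Lem:zig-zag-ladders:2}): one must verify that $\widehat{\HH}_\P$ is not merely connected but in fact linear, i.e.\ that no vertex acquires two incomparable successors or predecessors. Equivalently, at every vertex $w$ of $\HH_\P$, at most one of in-degree and out-degree exceeds $1$, and whenever it does, the multiple compositions at $w$ share an endpoint that gets absorbed into a chain rather than producing a branch. Verifying this carefully at each of the vertex types in the double zig-zag pattern (top/bottom, peak/valley, interior/boundary) is the technical heart of the proof.
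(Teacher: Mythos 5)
Your overall strategy --- enumerating the edges of $\widehat{\HH}_\P$ locally, one vertex of $\HH_\P$ at a time, as composable pairs of indecomposable relations --- is exactly the paper's approach (the paper simply draws $\HH_\P$ and $\widehat{\HH}_\P$ and reads off the components). The gap is in your execution of part (1). Your central claim, that in a zig-zag ladder every vertex $w$ of $\HH_\P$ satisfies $\mathrm{indeg}(w)\cdot\mathrm{outdeg}(w)\le 1$, is false. In type $FBFB\cdots$ with rungs pointing from the bottom row to the top row, the bottom-row vertex $2$ is a horizontal local maximum: it receives the two horizontal edges $1\to2$ and $3\to2$ and emits the rung $2\to2'$, so $\mathrm{indeg}(2)\cdot\mathrm{outdeg}(2)=2$; dually, the top-row local minimum $3'$ has one incoming rung and two outgoing horizontal edges. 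Hence $\widehat{\HH}_\P$ contains three-vertex components such as $12\to22'\leftarrow32$ and $3'2'\leftarrow33'\to3'4'$ (these appear explicitly in the paper's own figure), so your conclusion that every component is an isolated arc or an isolated vertex fails. The components are in fact $\mathbb{A}_n$ posets with $n\le3$ in the paper's convention that $\mathbb{A}_n$ has $n$ objects; no correct degree count gives components with at most two objects, so the bound ``$n\le2$'' can only be obtained by counting arrows rather than objects. Either way, your case analysis as written does not establish part (1) and must be redone.

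For part (2) your plan is reasonable in outline, but the criterion you propose to verify is not the right one, and the verification you defer is exactly where the content lies. A vertex $(u,v)$ of $\widehat{\HH}_\P$ has in-degree $\mathrm{indeg}_{\HH_\P}(u)$ and out-degree $\mathrm{outdeg}_{\HH_\P}(v)$, so linearity of $\widehat{\HH}_\P$ is the condition $\mathrm{indeg}_{\HH_\P}(u)+\mathrm{outdeg}_{\HH_\P}(v)\le2$ for every edge $(u,v)$ of $\HH_\P$, together with connectivity and acyclicity of the underlying undirected graph; your condition is phrased on vertices of $\HH_\P$ and does not directly bound these degrees, and your proposed count ``edges $=$ vertices $-1$'' plus connectivity only shows that $\widehat{\HH}_\P$ is a tree, not a path. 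These points are repairable by the same local enumeration you describe, but they need to be carried out explicitly; the paper does this implicitly by exhibiting the line digraph as a single alternating chain.
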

\begin{proof}
The Hasse diagram of a zig-zag commutative ladder of type \(FBFB\cdots\) has the general form drawn in the left diagram below.
One easily observes that the associated line digraph has the form drawn in the right diagram.
\begin{center}
	\begin{tikzpicture}
	\tikzstyle{point}=[circle,thick,draw=black,fill=black,inner sep=0pt,minimum width=2pt,minimum height=2pt]
		\tikzstyle{arc}=[shorten >= 2pt,shorten <= 2pt,->]
		\tikzstyle{arcl}=[shorten >= 2pt,shorten <= 2pt, <-]
		\node[] (1) at (0,0) {$1$};
		\node[] (2) at (1.5,0) {$2$};
		\node[] (3) at (3,0) {$3$};
		\node[] (4) at (4.5,0) {$4$};
		\node[] (n) at (6,0) {$n$};
		\node[] (1') at (0,1.5) {$1'$};
		\node[] (2') at (1.5,1.5) {$2'$};
		\node[] (3') at (3,1.5) {$3'$};
		\node[] (4') at (4.5,1.5) {$4'$};
		\node[] (n') at (6,1.5) {$n'$};
		\draw[arc] (0,0.2) to (0,1.3);
		\draw[arc] (1.5,0.2) to (1.5,1.3);
		\draw[arc] (3,0.2) to (3,1.3);
		\draw[arc] (4.5,0.2) to (4.5,1.3);
		\draw[arc] (6,0.2) to (6,1.3);
		\draw[arc] (0.2,0) to (1.3,0);
		\draw[arcl] (1.7,0) to (2.8,0);
		\draw[arc] (3.2,0) to (4.3,0);
		\draw[dashed] (4.7,0) to (5.7,0);
		\draw[arc] (0.2,1.5) to (1.3,1.5);
		\draw[arcl] (1.7,1.5) to (2.8,1.5);
		\draw[arc] (3.2,1.5) to (4.3,1.5);
		\draw[dashed] (4.7,1.5) -- (5.7,1.5);
	\end{tikzpicture}
	\hskip.2in
	\begin{tikzpicture}
		\tikzstyle{point}=[circle,thick,draw=black,fill=black,inner sep=0pt,minimum width=2pt,minimum height=2pt]
		\tikzstyle{arc}=[shorten >= 2pt,shorten <= 2pt,->]
		\node[] (11') at  (0,0) {\(11'\)};
		\node[] (1'2') at  (1,1) {\(1'2'\)};
		\node[] (12) at  (1,-1) {\(12\)};
		\node[] (22') at  (2,0) {\(22'\)};
		\node[] (3'2') at  (3,1) {\(3'2'\)};
		\node[] (32) at  (3,-1) {\(32\)};
		\node[] (33') at  (4,0) {\(33'\)};
		\node[] (3'4') at  (5,1) {\(3'4'\)};
		\node[] (34) at  (5,-1) {\(34\)};
		\node[] (44') at  (6,0) {\(44'\)};
		
		\draw[arc] (11') to (1'2');
		\draw[arc] (12) to (22');
		\draw[arc] (32) to (22');
		\draw[arc] (33') to (3'2');
		\draw[arc] (33') to (3'4');
		\draw[arc] (34) to (44');	
		
		\node[] at (7,0) {\(\cdots\)};
	\end{tikzpicture}
\end{center}
This proves Part \ref{Lem:zig-zag-ladders:1}.

A double zig-zag ladder of type \(FFBBFF\cdots\) has the form 

\begin{center}
\begin{tikzpicture}
	\tikzstyle{point}=[circle,thick,draw=black,fill=black,inner sep=0pt,minimum width=2pt,minimum height=2pt]
		\tikzstyle{arc}=[shorten >= 2pt,shorten <= 2pt,->]
		\tikzstyle{arcl}=[shorten >= 2pt,shorten <= 2pt, <-]
		\node[] (1) at (0,0) {$1$};
		\node[] (2) at (1.5,0) {$2$};
		\node[] (3) at (3,0) {$3$};
		\node[] (4) at (4.5,0) {$4$};
		\node[] (5) at (6,0) {$5$};
		\node[] (n) at (7.5,0) {$2n$};
		\node[] (1') at (0,1.5) {$1'$};
		\node[] (2') at (1.5,1.5) {$2'$};
		\node[] (3') at (3,1.5) {$3'$};
		\node[] (4') at (4.5,1.5) {$4'$};
		\node[] (5') at (6,1.5) {$5'$};
		\node[] (n') at (7.5,1.5) {$2n'$};
		\draw[arc] (0,0.2) to (0,1.3);
		\draw[arc] (1.5,0.2) to (1.5,1.3);
		\draw[arc] (3,0.2) to (3,1.3);
		\draw[arc] (4.5,0.2) to (4.5,1.3);
		\draw[arc] (6,0.2) to (6,1.3);
		\draw[arc] (7.5, 0.2) to (7.5, 1.3);
		\draw[arc] (0.2,0) to (1.3,0);
		\draw[arc] (1.7,0) to (2.8,0);
		\draw[arcl] (3.2,0) to (4.3,0);
		\draw[arcl] (4.7,0) to (5.7,0);
		\draw[dashed] (6.2,0) to (7.2,0);
		\draw[arc] (0.2,1.5) to (1.3,1.5);
		\draw[arc] (1.7,1.5) to (2.8,1.5);
		\draw[arcl] (3.2,1.5) to (4.3,1.5);
		\draw[arcl] (4.7,1.5) to (5.7,1.5);
		\draw[dashed] (6.2,1.5) to (7.2,1.5);
	\end{tikzpicture}
\end{center}
The associated line digraph has the form
\begin{center}
	\begin{tikzpicture}
		\tikzstyle{point}=[circle,thick,draw=black,fill=black,inner sep=0pt,minimum width=2pt,minimum height=2pt]
		\tikzstyle{arc}=[shorten >= 2pt,shorten <= 2pt,->]
		\node[] (11') at  (0,0) {\(11'\)};
		\node[] (1'2') at  (1,1) {\(1'2'\)};
		\node[] (12) at  (1,-1) {\(12\)};
		\node[] (22') at  (2,0) {\(22'\)};
		\node[] (2'3') at  (3,1) {\(2'3'\)};
		\node[] (23) at  (3,-1) {\(23\)};
		\node[] (33') at  (4,0) {\(33'\)};
		\node[] (4'3') at  (5,1) {\(4'3'\)};
		\node[] (43) at  (5,-1) {\(43\)};
		\node[] (44') at  (6,0) {\(44'\)};
		\node[] (54) at  (7,-1) {\(54\)};
		\node[] (5'4') at  (7,1) {\(5'4'\)};
		\node[] (55') at  (8,0) {\(55'\)};
		
		\draw[arc] (11') to (1'2');
		\draw[arc] (12) to (22');
		\draw[arc] (1'2') to (2'3');
		\draw[arc] (22') to (2'3');
		\draw[arc] (12) to (23);
		\draw[arc] (23) to (33');
		\draw[arc] (43) to (33');
		\draw[arc] (44') to (4'3');
		\draw[arc] (54) to (44');
		\draw[arc] (54) to (43);
		\draw[arc] (5'4') to (4'3');
		\draw[arc] (55') to (5'4');	
		
		\node[] at (9,0) {\(\cdots\)};
	\end{tikzpicture}
\end{center}
which is an $\mathbb{A}_n$ poset, thus proving Part \ref{Lem:zig-zag-ladders:2}.

The proof for zig-zag  and double zig-zag ladders of types $BFBF\cdots$ and \(BBFFBB\cdots\), respectively, is analogous, with arrows going the opposite way.
\end{proof}

We obtain an immediate corollary of Lemma \ref{Lem:zig-zag-ladders} and Gabriel's theorem.
\begin{Cor}\label{Cor:ladder-fin-rep-type}
Let $\P$ be a zig-zag or double zig-zag ladder of any length \(n\). Then for any $M\in k\P\mod$ the gradient $\nabla[M]$ is equal to a difference of two finite sums of isomorphism classes of $k\mathbb{A}_n$-modules.
\end{Cor}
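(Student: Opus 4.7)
The plan is to combine Lemma \ref{Lem:zig-zag-ladders} with the definition of the gradient and the Krull–Schmidt decomposition of finitely generated modules, using Gabriel's theorem only to observe that the indecomposables over an $\mathbb{A}_n$-quiver poset are (finitely many) interval modules, so that every module over $\widehat{\P}$ admits an essentially unique decomposition into such building blocks.

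First, I would recall that by Definition \ref{Def:grad-general} the gradient of $M\in k\P\mod$ is the virtual module
\[
\nabla[M] \;=\; [\phi^*M] \;-\; [\beta^*M] \quad \in\; \Gr(k\widehat{\P}),
\]
so it suffices to show that the class of an arbitrary $N\in k\widehat{\P}\mod$ is a finite sum of classes of $k\mathbb{A}_n$-modules, and then apply this to $N=\phi^*M$ and $N=\beta^*M$.

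Next, by Lemma \ref{Lem:zig-zag-ladders}, $\widehat{\P}$ is either a single $\mathbb{A}_m$-poset (in the double zig-zag case) or a disjoint union of connected components, each of which is an $\mathbb{A}_m$-poset with $m\leq 2$ (in the zig-zag case). In either case, a $k\widehat{\P}$-module is determined by its restriction to each component, and restriction along the inclusion of any component $\widehat{\P}_i \hookrightarrow \widehat{\P}$ induces a direct sum decomposition at the level of Grothendieck groups. Since $\phi^*M$ and $\beta^*M$ are finitely generated (as $\P$ is finite and $M$ takes values in $\VVect$), it suffices to handle one component at a time.

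Finally, each component is an $\mathbb{A}_m$-poset, whose underlying quiver is a Dynkin quiver of type $A$. By Gabriel's theorem \cite[Theorem 4.2.4]{Derksen-Weyman}, the category of finitely generated $k\mathbb{A}_m$-modules has finite representation type, with indecomposables in bijection with the positive roots of $A_m$ (i.e.\ the interval modules). Hence by Krull–Schmidt any finitely generated $k\widehat{\P}$-module decomposes as a finite direct sum of indecomposable $k\mathbb{A}_m$-modules across the components, so $[\phi^*M]$ and $[\beta^*M]$ are each finite sums of classes of $k\mathbb{A}_n$-modules and $\nabla[M]$ is their difference. There is no real obstacle here; the only thing to be careful about is bookkeeping across the (at most three) connected components in the zig-zag case and combining the decompositions into a single expression in $\Gr(k\widehat{\P})$.
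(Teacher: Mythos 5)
Your argument is correct and is exactly the route the paper intends: the paper gives no separate proof, declaring the corollary "immediate" from Lemma \ref{Lem:zig-zag-ladders} and Gabriel's theorem, and your write-up simply fleshes out that one-line justification via the definition of $\nabla$, the component-wise structure of $\widehat{\P}$, and Krull--Schmidt. No gaps.
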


There are obviously more types of commutative ladders, and more general posets, whose gradients can be decomposed as a finite combination of isomorphism classes of modules of finite representation type. For instance a ladder poset whose horizontal parts are juxtapositions of sequences of type $BBF$ also has a line poset that is a disjoint union of  $\mathbb{A}_n$ posets. Similarly certain trees have line posets of type $\mathbb{A}_n$ or disjoint union of them, see Figure \ref{Fig:gradient}. The classification of posets whose associated line posets are of finite representation type is an interesting question for which we do not know the answer.

\chg{This discussion also highlights a limitation of our approach. In both cases described in Lemma  \ref{Lem:zig-zag-ladders} the Grothendieck group $\Gr(k\P)$ is an infinitely generated free abelian group, if $k$ is algebraically closed and the length of the ladder is at least $5$, while $\Gr(k\Pone)$ is always finitely generated. This suggests that the gradient, as defined with respect to the Hasse diagram, does not carry sufficient information to distinguish modules up to some controllable  error term. The gradient with respect to larger generating quivers will be studied in subsequent work.}

\appendix
\chg{\section{Critical points}
We take the analogy with calculus on graphs a bit further by describing the relationship between our gradient and what is considered as critical points in graph theory. Unlike the case of ordinary calculus, the notion of critical points in graph theory does not necessarily mean vertices where the gradient vanishes. These concepts are very useful in applied graph theory (specifically for networks), but in our context they represent weak but potentially useful invariants of modules. A more comprehensive study of critical points for modules and potential applications will be investigated in future publication. Here we satisfy ourselves with the basic definitions. }

\chg{\begin{Defi}\label{Def:Crit-Pts}
Let $\P$ be a finite poset and let $\HH_\P$ be its Hasse diagram. For each vertex $v\in\HH_\P$, let $\caln_v^\inn$ and $\caln_v^\out$ denote the in- and out-neighbourhoods of $v$.  Let $f\colon \P\to\R$ be a function. A vertex $v\in\HH_\P$ is said to be 
\begin{itemize}
\item \hadgesh{a weak local maximum} if $f(v)\geq f(u)$ for all $u\in\caln_v^\inn$,
\item \hadgesh{a weak local minimum} if $f(v)\leq f(u)$ for all $u\in\caln_v^\out$,
\item \hadgesh{a strong max-type critical point} if $f(v)\geq f(u)$ for all $u\in\caln_v^\inn$ and $f(v)\geq f(w)$  for all  $w\in\caln_n^\out$,   
\item \hadgesh{a strong min-type critical point} if $f(v)\leq f(u)$ for all $u\in\caln_v^\out$ and $f(v)\leq f(w)$  for all  $w\in\caln_n^\inn$,
\item \hadgesh{a flat point} if $f(v) =f(u)$ for all in- and out-neighbours $u$ of $v$.
\end{itemize}
\end{Defi}
}

\chg{For a fixed vertex $v\in\HH_\P$, consider the associated line posets. For in- and out-neighbourhood the line poset is a set of isolated vertices, indexed by the in- and out-edges respectively. The line-poset of a full neighbourhood is a complete  bipartite digraph with source vertices indexed by in-edges and target vertices by out-edges. The gradient of a virtual $k\P$-module is very easy to compute and its Hilbert function gives a way of talking about the  concepts corresponding to Definition \ref{Def:Crit-Pts} in our context. We say that a vector $\mathbf{v} \in \R^n$ is positive if all its entries are positive. Similarly, for negative, non-positive and non-negative.}

\chg{\begin{Defi}
For any poset $\Q = (V,E)$, let $\Dimk=\Dimk_Q\colon\Gr(k\Q)\to\Z^V$ denote the Hilbert  function. Let $\P$ be a finite poset, and for any virtual module $X\in\Gr(k\P)$ and a sub-poset $\Q\subseteq\P$ denote by $X_\Q$ the restriction of $X$ to $\Q$. We say that an object $v\in\P$ is a
\begin{itemize}
\item \hadgesh{a weak local maximum} if $\Dimk(\nabla(X)_{\widehat{\caln}_v^\inn})$ is non-negative,
\item \hadgesh{a weak local minimum} if $\Dimk(\nabla(X)_{\widehat{\caln}_v^\out})$ is non-positive,
\item \hadgesh{a strong max-type critical object} if $\Dimk(\nabla(X)_{\widehat{\caln}_v})$ is non-negative, 
\item \hadgesh{a strong min-type critical object} if $\Dimk(\nabla(X)_{\widehat{\caln}_v})$ is non-positive,  and
\item \hadgesh{a flat object} if $\Dimk(\nabla(X)_{\widehat{\caln}_v})$ is the zero vector.
\end{itemize}
\end{Defi}
}
\bibliographystyle{plain}
\bibliography{references}

\end{document}